\theoremstyle{plain}
\newtheorem{theorem}{Theorem}[section]
\newtheorem{conjecture}[theorem]{Conjecture}
\newtheorem{proposition}[theorem]{Proposition}
\newtheorem{lemma}[theorem]{Lemma}
\newtheorem{corollary}[theorem]{Corollary}
\newtheorem{claim}{Claim}
\newtheorem{problem}[theorem]{Problem}
\author{Carl Johan Casselgren\affiliationmark{1}
\thanks{Casselgren was supported by a grant
from the Swedish Research Council (2017-05077).}
    \and  Petros A. Petrosyan\affiliationmark{2}}
\title[Some results on the palette index of graphs]
{Some results on the palette index of graphs\footnote
{A preliminary version of some of the results in this paper appeared
in the proceedings of the conference CSIT 2017, Yerevan, Armenia}}
\affiliation{
  Link\"oping University, Sweden\\
  Yerevan State University, Armenia}
\keywords{edge coloring, palette index,
cyclic interval edge coloring}
\begin{document}
\publicationdetails{21}{2019}{3}{11}{4509}
\maketitle
\begin{abstract}
 	Given a proper edge coloring $\varphi$ of a graph $G$, we define
	the palette $S_{G}(v,\varphi)$ of a vertex $v \in V(G)$ as
	the set of all colors appearing on edges incident with $v$.
	The palette index $\check s(G)$ of $G$ is the minimum number
	of distinct palettes occurring in a proper edge coloring of $G$.
	In this paper we give various upper and lower bounds on the
	palette index of $G$ in terms of the vertex degrees of $G$,
	particularly for the case when $G$ is a bipartite graph
	with small vertex degrees.
	Some of our results concern $(a,b)$-biregular graphs; that is,
	bipartite graphs where all vertices in one part have degree $a$
	and all vertices in the other part have degree $b$.
	We conjecture that if $G$ is $(a,b)$-biregular, then
	$\check{s}(G)\leq 1+\max\{a,b\}$, and we prove that this
	conjecture holds for several families of $(a,b)$-biregular graphs.
	Additionally, we characterize the graphs whose palette index
	equals the number of vertices.
\end{abstract}

\section{Introduction}
Given an edge coloring $\varphi$ of a graph $G$, we define
the {\it palette} $S_{G}(v,\varphi)$
(or just $S(v,\varphi)$)
 of a vertex $v \in V(G)$ as
the set of all colors appearing on edges incident with $v$.
The {\em palette index} $\check s(G)$ of $G$ is the minimum number
of distinct palettes occurring in a proper edge coloring of $G$.
This notion was introduced quite recently 
by \cite{HornakKalinowskiMeszkaWozniak} 
and has so far primarily been studied for the
case of regular graphs.

Denote by $\Delta(G)$ and $\chi'(G)$ the maximum degree
and the chromatic index of a graph $G$, respectively.
By Vizing's well-known edge coloring theorem $\chi'(G) = \Delta(G)$
or $\chi'(G)=\Delta(G)+1$ for every graph $G$. In the former case
$G$ is said to be {\em Class 1}, and in the latter case $G$ is
{\em Class 2}.

Trivially, $\check s(G) = 1$ if and only $G$ is a regular
Class $1$ graph, and by Vizing's edge coloring theorem
it holds that if $G$ is regular and Class 2, then
$3 \leq \check s(G) \leq \Delta(G)+1$; the case
$\check s(G) = 2$ is not possible, 
as proved in \cite{HornakKalinowskiMeszkaWozniak}.

Since computing the chromatic index of a given graph
is $\mathcal{NP}$-complete, as proved in \cite{LevenGalil},
determining the palette index of a given
graph is $\mathcal{NP}$-complete, even for $3$-regular graphs.
Note further that this in fact means that even determining if a given graph
has palette index $1$ is an $\mathcal{NP}$-complete problem.
Nevertheless, in \cite{HornakKalinowskiMeszkaWozniak} it was proved that
the palette index of a cubic Class 2 graph is $3$ or $4$ according
to whether the graph has a perfect matching or not.
 \cite{BonviciniMazzuoccolo} 
investigated 4-regular graphs; they
proved that $\check s(G) \in \{3,4,5\}$
if $G$ is $4$-regular and Class 2, and that
all these values are in fact attained.

Vizing's edge coloring theorem yields an upper bound on the palette index 
of a general graph $G$ with maximum
degree $\Delta$ and no isolated vertices, 
namely that $\check s(G) \leq 2^{\Delta+1}-2$.
However, this is probably far from being tight. Indeed,
\cite{AvesaniBonisoliMazzuoccolo} 
described an infinite family
of multigraphs whose palette index grows 
asymptotically as $\Delta^2$; it is
an open question whether there are 
such examples without multiple edges.
Furthermore, they
suggested to prove that there is a polynomial 
$p(\Delta)$ such that for any graph with maximum 
degree $\Delta$, $\check s(G) \leq p(\Delta)$.
In fact, they suggested that such a polynomial is quadratic in $\Delta$.
We thus arrive at the following conjecture:

\begin{conjecture}
\label{conj:Avesani}
	There is a constant $C$, such that 
	for any graph $G$ with maximum degree $\Delta$,
	$\check s(G) \leq C\Delta^2$.
\end{conjecture}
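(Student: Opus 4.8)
The plan is to prove the conjecture in the following stronger, more structured form: every graph $G$ with maximum degree $\Delta$ and no isolated vertices admits a proper edge coloring with $N\le c\Delta$ colors, for an absolute constant $c$, in which the palette of every vertex is a \emph{cyclic interval} of $\mathbb{Z}_N$, i.e.\ a set of the form $\{a,a+1,\dots,a+\ell-1\}$ with indices taken modulo $N$. Since $\mathbb{Z}_N$ has exactly $N(N-1)+1=O(N^2)$ cyclic intervals, such a coloring would give $\check s(G)\le c^2\Delta^2+1$. Two observations motivate this target. First, ordinary interval colorings (palettes being genuine intervals) would serve equally well but need not exist even for $K_3$, so the cyclic relaxation is the natural candidate. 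Second, in view of the multigraph examples with $\check s$ of order $\Delta^2$ mentioned above, $\Theta(\Delta^2)$ is the correct order of magnitude, and it matches the size $\Theta(N^2)$ of the family of cyclic intervals exactly when $N=O(\Delta)$; in particular one cannot afford to let $N$ grow faster than linearly in $\Delta$.

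For the construction I would first pass to a uniform setting: $E(G)$ decomposes into $m=\lceil\Delta/2\rceil$ subgraphs $G_1,\dots,G_m$ each of maximum degree at most $2$, obtained by embedding $G$ in a regular supergraph and applying a Petersen-type $2$-factorization. Each $G_i$ is a disjoint union of paths and cycles and hence has a cyclic interval coloring with at most $3$ colors. The core of the argument is then to \emph{merge} these colorings into a single proper edge coloring of $G$ using $O(\Delta)$ colors whose palettes are all cyclic intervals: one wants the colors that a fixed vertex $v$ receives from the successive $G_i$ it belongs to to be assigned so that, around the cycle $\mathbb{Z}_N$, they occupy one consecutive block rather than being scattered.

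I expect this merging step to be the main obstacle. If the $G_i$ are colored with pairwise disjoint color sets, then the palette of $v$ is the disjoint union of its palettes in the $G_i$, and the number of distinct such unions is bounded only by $\prod_i\check s(G_i)$, which is exponential in $\Delta$; hence the color sets must be heavily reused and aligned, and forcing a consistent alignment at all vertices simultaneously is essentially a cyclic interval-coloring requirement --- and whether every graph has a cyclic interval coloring with $O(\Delta)$ colors (indeed, with any number of colors bounded in terms of $\Delta$) is itself open. I would therefore try to circumvent this in one of three ways: (i) prove the needed cyclic interval statement only for the rather special graphs produced by the $2$-factor decomposition, which are close to unions of even subgraphs and may be more tractable; (ii) split $E(G)$ into a \emph{bounded} number of parts, each provably of palette index $O(\Delta)$, so that combining them keeps the bound polynomial (a product of a constant number of $O(\Delta)$ factors); or (iii) replace ``cyclic interval'' by a somewhat larger but still $O(\Delta^2)$-size family of admissible palettes that is easier to realize by an explicit recoloring of a Vizing coloring. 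Absent such an input, the conjecture appears to require a genuinely new idea, and even an unconditional polynomial bound $\check s(G)\le p(\Delta)$ would already be a substantial step.
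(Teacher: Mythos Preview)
This statement is a \emph{conjecture} in the paper, not a theorem: the paper does not prove it and treats it as open throughout. So there is no ``paper's own proof'' to compare against. The paper only establishes partial cases (complete multipartite graphs, bipartite graphs with degrees in $\{1,2,3,4,2r-4,\dots,2r\}$, graphs with $\Delta-\delta\le 2$, various biregular families), none of which amounts to the general statement.

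Your proposal is not a proof either, and to your credit you say so explicitly in the final paragraph. What you have written is a research outline whose central step --- merging the $2$-factor colorings into a single proper edge coloring with $O(\Delta)$ colors in which every palette is a cyclic interval --- is precisely the open problem the paper itself flags: the authors note that a cyclic interval $C\Delta$-coloring would yield the conjecture, but that even the existence of a cyclic interval coloring with more than $\Delta+1$ colors is unresolved. So your route (i), proving cyclic interval colorability for the graphs arising from a Petersen decomposition, is not obviously easier than the general case, since the decomposition imposes essentially no structural restriction on $G$. Route (ii), splitting into boundedly many pieces each of palette index $O(\Delta)$, would indeed suffice, but you give no candidate decomposition or argument for why the pieces would have linear palette index. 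Route (iii) is too vague to evaluate. In short, you have correctly identified both the natural line of attack and the reason it is currently blocked, but nothing here closes the gap; the conjecture remains open after your discussion just as it does after the paper's.
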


Very little is known about the palette index of non-regular graphs.
 \cite{BonisoliBonviciniMazzuoccolo} 
studied the palette index of trees, and quite recently
\cite{HornakHudak} completely determined
the palette index of complete bipartite graphs $K_{a,b}$
with $a \leq 5$.

In this note we study the palette index of some families
of non-regular graphs. Before outlining the results of this paper,
let us briefly consider a connection to another kind of edge coloring.

\bigskip

An {\em interval $t$-coloring} of a graph $G$ is a proper $t$-edge coloring such
that for every vertex $v$ of $G$ the colors of the 
edges incident with $v$ form an
interval of consecutive integers; if we also add the condition that
color $1$ is considered as consecutive of color $t$, then we get a
{\em cyclic interval $t$-coloring}. Note that any graph $G$
with an interval coloring admits a cyclic interval $\Delta(G)$-coloring
(by taking all colors modulo $\Delta(G)$).

As noted in \cite{AvesaniBonisoliMazzuoccolo},
if a graph $G$ with maximum degree $\Delta$
has an interval coloring, then
$\check s(G) \leq \Delta^2-\Delta+1$. Moreover, this upper bound holds
for graphs with a cyclic interval $\Delta$-coloring (as implicit in
the proof in \cite{AvesaniBonisoliMazzuoccolo}).
In fact, it holds that for any graph $G$ with maximum degree $\Delta$,
if $G$ has a cyclic interval $C\Delta$-coloring, where $C$ is some absolute constant,
then the palette index of $G$ is bounded by a quadratic polynomial
in $\Delta$. 
An example of a family of graphs with this property
(which do not in general admit interval colorings)
are complete multipartite graphs; 
such a graph $G$ has a cyclic interval coloring
with at most $2\Delta(G)$ colors, as proved in
\cite{AsratianCasselgrenPetrosyanJGT}.
Since there are at most $\Delta$ different vertex degrees
in a graph with maximum degree $\Delta$,
it follows that
Conjecture \ref{conj:Avesani} is true for
every complete multipartite graph.

\begin{proposition}
	If $G$ is a complete multipartite graph with maximum degree
	$\Delta$, then $\check s(G) \leq 2 \Delta^2$.
\end{proposition}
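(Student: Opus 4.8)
The plan is to combine the two results quoted just before the proposition: first, the theorem of Asratian, Casselgren and Petrosyan that every complete multipartite graph $G$ admits a cyclic interval coloring with at most $2\Delta(G)$ colors; and second, the observation (implicit in Avesani, Bonisoli and Mazzuoccolo) that a graph with a cyclic interval $k$-coloring has few distinct palettes. So I would fix a cyclic interval coloring $\varphi$ of $G$ using colors $\{1,\dots,2\Delta\}$, and count how many palettes $S_G(v,\varphi)$ can possibly occur.

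The key point is that in a cyclic interval coloring, the palette of each vertex $v$ is a set of $\deg(v)$ consecutive colors in the cyclic order on $\mathbb{Z}_{2\Delta}$; such a ``cyclic interval'' of length $\deg(v)$ is completely determined by its smallest element (say, the first color after the unique ``gap'' of length $2\Delta-\deg(v)$, which is nonempty since $\deg(v)\le\Delta<2\Delta$). Hence there are at most $2\Delta$ choices for the starting color and at most $\Delta$ choices for the length $\deg(v)\in\{1,\dots,\Delta\}$, giving at most $2\Delta\cdot\Delta=2\Delta^2$ distinct palettes. Since $\varphi$ is one particular proper edge coloring of $G$, the minimum number of palettes over all proper edge colorings is at most this count, so $\check s(G)\le 2\Delta^2$.

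There is essentially no obstacle once the two cited results are in hand; the only thing to be careful about is the edge case $\deg(v)=2\Delta$, which cannot occur because $\deg(v)\le\Delta$ and $\Delta\ge1$ (the graph has at least one edge), so every palette genuinely is a \emph{proper} cyclic subinterval with a well-defined starting point — this is what makes the ``starting color'' parametrization unambiguous. (If one worries about isolated vertices, a complete multipartite graph with an edge has none among the parts of size contributing to $\Delta$; in any case vertices of degree $0$ all share the empty palette and contribute at most one more, which is harmless.) I would state the bound $\check s(G)\le 2\Delta^2$ as following immediately, noting in passing that the slightly sharper estimate $2\Delta\cdot\Delta$ could be trimmed further but that the quadratic bound is all that is needed to confirm Conjecture~\ref{conj:Avesani} for this class.
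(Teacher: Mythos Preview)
Your proposal is correct and follows essentially the same route as the paper: use the cyclic interval $2\Delta$-coloring of Asratian--Casselgren--Petrosyan, then count palettes as cyclic intervals parametrized by length (at most $\Delta$ values, since there are at most $\Delta$ distinct vertex degrees) and starting color (at most $2\Delta$ values). The paper states this more tersely, simply noting that ``there are at most $\Delta$ different vertex degrees,'' but the underlying count is identical to yours.
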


We do not know of any cyclically interval colorable graph $G$
that requires more than $2 \Delta(G)$ colors for a cyclic interval coloring;
thus we suggest that
Conjecture \ref{conj:Avesani} particularly holds for
any graph with a cyclic interval coloring.
Note further that it is in fact an open problem to determine
if there is a graph $G$ that requires more than $\Delta(G) +1$ colors for a cyclic interval coloring
(cf. \cite{CasselgrenKhachatrianPetrosyan}).

In the following, we shall present some further upper bounds on the palette
index based on
connections with cyclic interval colorings;
as it turns out, existence of cyclic interval colorings do in fact
provide tight upper bounds on the palette index of some
families of graphs.
Furthermore, motivated by the connection with cyclic interval colorings,
we consider the problem of determining the palette index
of a natural generalization
of regular bipartite graphs, namely so-called 
{\em $(a,b)$-biregular graphs}, i.e.,
bipartite graphs where all vertices in one part have degree $a$ and
all vertices in the other part have degree $b$. 
Note that regular bipartite graphs trivially have 
cyclic interval colorings;
it has been conjectured in \cite{CasselgrenToft} 
that this also holds for $(a,b)$-biregular graphs.

\begin{conjecture}
\label{conj:cycbiregular}
	Every $(a,b)$-biregular graph admits a cyclic interval 
	$\max\{a,b\}$-coloring.\footnote{See e.g. 
	\cite{AsratianCasselgrenPetrosyanJGT}, for further information
on the status of this conjecture.}
\end{conjecture}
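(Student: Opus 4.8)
The plan is to attack this conjecture by recasting it as a constrained edge-decomposition problem and then establishing feasibility, first for structured parameter values and then in general. Throughout, assume without loss of generality that $a \le b$, so $\max\{a,b\}=b$, and fix a bipartition $(X,Y)$ of the $(a,b)$-biregular graph $G$ with $\deg(x)=a$ for $x\in X$ and $\deg(y)=b$ for $y\in Y$. A cyclic interval $b$-coloring uses colors from $\mathbb{Z}_b$, and since every $y\in Y$ has degree $b$, properness forces each $Y$-vertex to see all $b$ colors, which is automatically a cyclic interval of length $b$. Thus the only genuine requirement is at the degree-$a$ vertices. The first step is therefore to record that König's edge-coloring theorem already supplies a proper $b$-edge-coloring of $G$, equivalently a decomposition $G = M_0 \cup \dots \cup M_{b-1}$ into matchings each saturating $Y$, so the condition at $Y$ is free; the goal then reduces purely to choosing such a decomposition so that for every $x\in X$ the index set $\{\, i : x \text{ is covered by } M_i \,\}$ is a cyclic interval of $\mathbb{Z}_b$ of length $a$. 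Equivalently, I would assign to each $x\in X$ a starting color $s(x)\in\mathbb{Z}_b$ and require the $a$ edges at $x$ to receive the colors $s(x),s(x)+1,\dots,s(x)+a-1$ (mod $b$), with properness becoming the demand that at each $y\in Y$ the $b$ incident edges receive all of $\mathbb{Z}_b$.

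The second step is to settle the easy and structured cases, both to dispose of them and to build the propagation tools. The case $a=b$ is the regular bipartite case, where decomposing into perfect matchings colored $0,\dots,b-1$ works immediately, and more generally the divisibility case $a\mid b$ should be tractable by grouping color classes into blocks of length $a$. For $a=2$ I would suppress the degree-two $X$-vertices to obtain a $b$-regular multigraph $H$ on $Y$, apply an Eulerian decomposition of $H$ into closed trails, and translate a consistent traversal into a choice of two cyclically consecutive colors at each suppressed vertex while keeping the colors at each $Y$-vertex distinct; this handles $(2,b)$-biregular graphs and, crucially, illustrates the mechanism of propagating a purely local interval constraint along a global combinatorial structure.

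The third and decisive step is the general case, and here I expect the main obstacle. The tempting inductive idea—peel off one matching saturating $Y$ and reduce $(a,b)$ to $(a-1,b-1)$—fails, because deleting such a matching lowers the $X$-degrees uniformly to $a-1$ but lowers the $Y$-degrees non-uniformly, so biregularity is destroyed and the induction hypothesis no longer applies. The alternative is to start from an arbitrary proper $b$-edge-coloring and repair it by Kempe-type recolorings: when the colors at some $x$ fail to form an interval, swap two colors along an alternating path or cycle to slide the missing color into the gap. The difficulty, and the reason the conjecture is still open, is that such a swap is not local—it can wreck the interval property at other degree-$a$ vertices lying on the alternating chain, so there is no obvious monovariant forcing termination. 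The route I would ultimately pursue is to encode the starting-color assignment $s:X\to\mathbb{Z}_b$ together with the per-edge color choices as a single global constraint system and to prove feasibility through a Hall-type or network-flow condition, or by exhibiting an algebraic/circulation invariant around the cycles of $G$ as the only potential obstruction and showing that it always vanishes for biregular graphs; making this obstruction precise and proving its vanishing is where essentially all of the work—and the full strength of the conjecture—resides.
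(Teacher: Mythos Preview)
The statement you are attempting to prove is \emph{Conjecture}~\ref{conj:cycbiregular}, and the paper does not prove it; it is stated as an open problem (attributed to \cite{CasselgrenToft}, with a footnote directing the reader to the literature for its current status). There is therefore no ``paper's own proof'' to compare your proposal against.

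As for your proposal itself, it is not a proof but an outline of possible attack lines, and you are candid about this: you explicitly write that the Kempe-chain repair approach has no obvious monovariant, that the inductive peel-off fails, and that the flow/Hall-type feasibility argument ``is where essentially all of the work---and the full strength of the conjecture---resides.'' In other words, the genuine gap is the entire core of the argument, and you have identified it yourself. Your reductions in the first two steps are correct and well known (the observation that the $Y$-side constraint is automatic, the $a\mid b$ block decomposition, and the Eulerian treatment of $(2,b)$-biregular graphs are all in the literature), but none of them extends to the general case, and your third step is a description of the obstacle rather than a method for overcoming it. Until you can either produce a terminating recoloring procedure with a valid potential function or exhibit and verify the Hall-type feasibility condition you allude to, what you have is a survey of approaches to an open conjecture, not a proof.
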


The general problem of determining the palette index of a given
$(a,b)$-biregular graph is $\mathcal{NP}$-complete; this follows e.g.
from the complexity result in \cite{AsratianCasselgren}.
We would like to suggest the following weakening of Conjecture
\ref{conj:cycbiregular}, which is a strengthening of
Conjecture \ref{conj:Avesani} for biregular graphs.

\begin{conjecture}
\label{conj:palettebiregular}
For any $(a,b)$-biregular graph $G$, $\check{s}(G)\leq
1+\max\{a,b\}$.
\end{conjecture}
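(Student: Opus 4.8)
\medskip
\noindent\textbf{Towards Conjecture~\ref{conj:palettebiregular}.}
Assume throughout that $a \le b$, so that $\Delta(G) = b = \max\{a,b\}$, and let $X$ denote the part of $G$ all of whose vertices have degree $a$ and $Y$ the part all of whose vertices have degree $b$. Since $G$ is bipartite, K\"onig's edge coloring theorem gives $\chi'(G) = b$, so $G$ has a proper edge coloring with colors $\{1,\dots,b\}$; in any such coloring every vertex of $Y$ has the full palette $\{1,\dots,b\}$. Thus the whole problem reduces to finding a proper $b$-edge-coloring in which the palettes of the vertices of $X$ --- each an $a$-element subset of $\{1,\dots,b\}$ --- take at most $b$ distinct values. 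The cleanest way to arrange this is to force every such palette to be a \emph{cyclic interval} $\{i,i+1,\dots,i+a-1\}$, with arithmetic modulo $b$: there are exactly $b$ of these, so the coloring would use at most $b+1$ palettes altogether. A proper $b$-edge-coloring with this property is precisely a cyclic interval $b$-coloring, so Conjecture~\ref{conj:palettebiregular} follows immediately from Conjecture~\ref{conj:cycbiregular}, and in particular it holds for every $(a,b)$-biregular family already known to admit cyclic interval $\max\{a,b\}$-colorings (regular bipartite graphs, and the biregular classes for which Conjecture~\ref{conj:cycbiregular} has been verified, cf.~\cite{AsratianCasselgrenPetrosyanJGT}).

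To handle families outside the reach of Conjecture~\ref{conj:cycbiregular} one should exploit the slack in the weaker target: we only need $b$ distinct $a$-subsets of $\{1,\dots,b\}$, not $b$ intervals. The plan is first to decompose $E(G)$ as an edge-disjoint union of biregular subgraphs of smaller degree, $E(G) = H_1 \sqcup \dots \sqcup H_t$, where in $H_k$ the vertices of $X$ have degree $a_k$ and those of $Y$ degree $b_k$ with $\sum_k a_k = a$ and $\sum_k b_k = b$, the pieces being chosen to be simpler graphs for which good edge colorings are already available (such decompositions exist, for example, when $a$ and $b$ are both even, via iterated Eulerian bisection, or in the small-degree cases handled by ad hoc arguments). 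One then colors each $H_k$ with a suitable coloring on its own block of colors and chooses these blocks to be cyclic rotations of one another, so that the palette of an $X$-vertex in the assembled coloring of $G$ is again a union of rotated pieces and hence lies in a rotation-family of size at most $b$. Finally, the few $X$-vertices whose palette would otherwise fall outside the intended family are corrected by Kempe-chain recolorings that move one color at a time.

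The step I expect to be the real obstacle is twofold. Technically, the recolorings used to merge stray palettes must be shown not to create, at other vertices, palettes outside the target family --- the familiar pitfall that a single Kempe switch in a $b$-edge-coloring can change many palettes simultaneously --- and when $a \nmid b$, or $\gcd(a,b)$ is awkward, there is no clean uniform decomposition to start from, so the bookkeeping becomes delicate, and this is really where each new family must be won separately. More conceptually, any complete proof of Conjecture~\ref{conj:palettebiregular} would have to engage with cyclic interval colorings of biregular graphs in essentially full generality, and since Conjecture~\ref{conj:cycbiregular} --- indeed, even the far weaker question of whether every biregular graph admits a cyclic interval $(\Delta+1)$-coloring --- is wide open, settling the palette conjecture in full is likely to be comparably hard; the realistic program is therefore to keep enlarging the list of $(a,b)$-biregular families for which $\check{s}(G)\le 1+\max\{a,b\}$ can be proved unconditionally.
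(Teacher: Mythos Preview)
The statement is a \emph{conjecture}, and the paper does not prove it; it is posed as an open problem, shown to be a weakening of Conjecture~\ref{conj:cycbiregular}, and then verified for a list of specific $(a,b)$-biregular families in Section~4. Your write-up correctly treats it the same way: you observe that a cyclic interval $\max\{a,b\}$-coloring yields at most $b+1$ palettes (exactly the implication the paper records), and you then sketch a decomposition-and-recombination strategy for attacking particular families. That strategy is precisely what the paper carries out in Propositions~\ref{prop:bireg(3,3r)}, \ref{prop:bireg(4,4r)}, \ref{prop:bireg(5,5r)}, and \ref{prop:bireg(r,2r)}: split $G$ into biregular pieces of smaller degree (via Eulerian bisection, Petersen's theorem, or matchings covering the high-degree side), color each piece on its own block of colors so that the $Y$-palette is forced and the $X$-palettes are controlled, and then count. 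Your discussion and the paper's program are therefore in close agreement; the main difference is that the paper does not attempt any Kempe-chain clean-up step and instead accepts whatever product of palette-counts the decomposition gives, which is why several of its bounds (e.g.\ $r^2+1$ for $(3,3r)$- and $(4,4r)$-biregular graphs) exceed $1+\max\{a,b\}$ for large $r$ and only confirm the conjecture for small values.
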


Note that the upper bound in Conjecture \ref{conj:palettebiregular}
is in general tight, since $\check s(G) = b+1$ if $G$ is $(1,b)$-biregular.
However, as we shall see, the upper bound in Conjecture 
\ref{conj:palettebiregular} can be slightly improved for some
values of $a$ and $b$.

Let us now outline the main results of this paper.
We shall present
several results towards
Conjectures \ref{conj:Avesani} and \ref{conj:palettebiregular}.
In the next section we prove a general upper bound on the palette index
of bipartite graphs and deduce that
Conjecture \ref{conj:Avesani} holds for bipartite graphs where
all vertex degrees are in the set
$\{1,2,3,4,2r-4,2r-3,2r-2, 2r-1, 2r\}$, for some $r \geq 1$.
Additionally, we demonstrate that Conjecture \ref{conj:Avesani}
is true for general graphs $G$ satisfying that $\Delta(G)-\delta(G) \leq 2$,
where $\delta(G)$ denotes the minimum degree of $G$.

In Section 3 we consider bipartite graphs with small vertex degrees.
In particular, we obtain sharp upper bounds on the palette indices of
Eulerian bipartite graphs with maximum degree at most $6$. We
also determine the palette index of grids.

Section 4 concerns biregular graphs and Conjecture 
\ref{conj:palettebiregular}. We prove that this
conjecture holds for all $(2,r)$-biregular
and $(2r-2,2r)$-biregular graphs. Additionally, we
establish that it holds
for all $(a,b)$-biregular graphs such that
\begin{itemize}
	
	\item $(a,b) \in \{(3,6), (3,9)\}$;
	 
	\item $(a,b) \in \{(4,6), (4,8), (4,12), (4,16)\}$;
	
	\item $(a,b) \in \{(5,10),(6,9), (6,12)\}$;
	
	\item $(a,b) \in \{(8,12), (8,16), (12,16)\}$.

\end{itemize}

Finally, as mentioned above, $\check s(G) =1$ if and only if $G$ is regular
and Class 1; in Section 5 we characterize the graphs 
whose palette index is at the opposite end of the spectrum; that is,
we give a complete characterization of the graphs whose palette index equals the number of vertices.\\

\section{General upper bounds}

As noted earlier, Vizing's edge coloring theorem yields an
upper bound of the palette index of a general graph, and
K\"onig's edge coloring theorem shows that
this general upper bound can be slightly improved for bipartite graphs:
$\check s(G) \leq 2^\Delta-1$ for any bipartite graph $G$ with maximum
degree $\Delta$ and no isolated vertices. 
In the following we shall give an improvement of this general upper bound for bipartite graphs. Throughout, we assume that all graphs in this section
do not contain any isolated vertices.

We shall need a classic result from factor theory.
A \emph{$2$-factor} of a multigraph $G$ (where loops are allowed) is a
$2$-regular spanning subgraph of $G$.

\begin{theorem} (Petersen's Theorem).
\label{th:Petersen}
	Let $G$ be a $2r$-regular
	multigraph (where loops are allowed). Then $G$ has a decomposition
	into edge-disjoint $2$-factors.
\end{theorem}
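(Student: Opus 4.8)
The plan is to give the standard proof via Eulerian orientations, reducing the statement to K\"onig's edge colouring theorem for bipartite multigraphs. First I would dispose of trivial cases and reduce to connected $G$: a disjoint union of $2$-factor decompositions of the components is a $2$-factor decomposition of $G$, so we may assume $G$ is connected; and if $r=0$ there is nothing to prove, so assume $r\geq 1$. Then $G$ has an edge and, being connected with every vertex of even degree (a loop contributing $2$), it admits an Eulerian circuit.

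Fix such a circuit and orient each edge of $G$ in the direction in which it is traversed (a loop at $v$ becomes one out-arc and one in-arc at $v$). In the resulting digraph $D$ every vertex has out-degree $r$ and in-degree $r$. Now build an auxiliary bipartite multigraph $H$ on vertex classes $\{v^{+}:v\in V(G)\}$ and $\{v^{-}:v\in V(G)\}$, adding one edge $u^{+}v^{-}$ for each arc $u\to v$ of $D$. Then $H$ is $r$-regular and bipartite, so by K\"onig's edge colouring theorem $\chi'(H)=r$; since $H$ is $r$-regular, each of the $r$ colour classes is a perfect matching of $H$. This gives a partition $E(H)=M_{1}\cup\dots\cup M_{r}$ into perfect matchings.

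It remains to pull this back to $G$. Each $M_{i}$ picks, for every vertex $v$, exactly one arc leaving $v$ (the one at $v^{+}$) and exactly one arc entering $v$ (the one at $v^{-}$); discarding orientations, the corresponding edge set $F_{i}\subseteq E(G)$ contributes exactly two edge-ends at every vertex, i.e.\ $F_{i}$ is a $2$-regular spanning subgraph of $G$. (An edge $v^{+}v^{-}\in M_{i}$ corresponds to a loop at $v$, and a pair of opposite arcs between $u$ and $v$ to a pair of parallel edges; both are admissible in a $2$-factor of a multigraph, a loop counting twice toward the degree.) Since the correspondence between arcs of $D$ and edges of $G$ is a bijection and the $M_{i}$ partition $E(H)$, the sets $F_{1},\dots,F_{r}$ partition $E(G)$, yielding the desired decomposition into edge-disjoint $2$-factors.

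I expect no serious obstacle here; the only points that need care are bookkeeping ones: checking that loops and parallel edges behave correctly under the orientation and the $H$-construction, so that each $F_{i}$ is a genuine $2$-factor of the multigraph, and recalling that K\"onig's edge colouring theorem is valid for bipartite multigraphs and not merely for simple bipartite graphs.
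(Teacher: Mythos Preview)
Your proof is correct and is the standard argument for Petersen's theorem via an Eulerian orientation followed by K\"onig's edge colouring theorem for bipartite multigraphs; the bookkeeping for loops and parallel edges is handled properly. Note, however, that the paper does not prove this statement at all: it merely quotes Petersen's theorem as a classical result and uses it as a black box in the proof of Theorem~\ref{prop:bipevendeg}, so there is no ``paper's own proof'' to compare against.
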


For a graph $G$, denote by $D(G)$ the set of all degrees in $G$,
and by $D^{\text{odd}}(G)$ ($D^{\text{even}}(G)$) the set of all odd (even) degrees in $G$.
A graph is {\em even (odd)} if all vertex degrees of the graph are even (odd).

\begin{theorem}
\label{prop:bipevendeg}
	If $G$ is an even bipartite graph, then
	$$\check s(G) \leq \sum_{d \in D(G)} \binom{\frac{\Delta(G)}{2}}
	{\frac{d}{2}}.$$
\end{theorem}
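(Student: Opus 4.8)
The plan is to exploit the even degree condition together with Petersen's theorem. First I would embed $G$ into a $\Delta$-regular even bipartite multigraph $H$ on the same vertex set, where $\Delta = \Delta(G)$. This is a standard operation: since $G$ is bipartite with all degrees even and at most $\Delta$ (and $\Delta$ is even), one can add edges --- possibly parallel edges, and possibly loops if we are willing to work with the multigraph version --- to bring every vertex up to degree $\Delta$; more carefully, one takes two copies of $G$ and joins corresponding vertices by $\Delta - d(v)$ parallel edges, which keeps bipartiteness and makes the result $\Delta$-regular, or alternatively adds a suitable set of edges within a bipartite frame. Either way we obtain a $\Delta$-regular bipartite (multi)graph $H \supseteq G$.

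Next I would apply Petersen's theorem (Theorem~\ref{th:Petersen}) to decompose $H$ into $\frac{\Delta}{2}$ edge-disjoint $2$-factors $F_1, \dots, F_{\Delta/2}$. Each $F_i$ is a disjoint union of even cycles (since $H$ is bipartite, so has no odd cycles, and loops cannot occur in a simple-enough construction --- if loops do occur one handles them separately, but in the bipartite setting we can avoid them). Each even cycle is properly $2$-edge-colorable, so each $2$-factor $F_i$ admits a proper edge coloring using the two colors $\{2i-1, 2i\}$, and at every vertex $v$ of $H$ exactly one of these two colors appears on each of the two edges of $F_i$ at $v$; in other words, the palette contribution of $F_i$ at any vertex is always the full pair $\{2i-1,2i\}$. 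Combining these colorings over all $i$ gives a proper edge coloring $\varphi$ of $H$ with colors $\{1, \dots, \Delta\}$ in which every vertex of $H$ has palette $\{1,2,\dots,\Delta\}$ --- a single palette. Now restrict $\varphi$ back to $G$: a vertex $v \in V(G)$ of degree $d$ lies on exactly $\frac{d}{2}$ of the $2$-factors and on $\frac{\Delta-d}{2}$ of them only via edges of $H \setminus G$; hence its palette in $G$ is a union of $\frac{d}{2}$ of the color-pairs $\{2i-1,2i\}$. The number of distinct such palettes that can occur among vertices of degree $d$ is at most $\binom{\Delta/2}{d/2}$, the number of ways to choose which pairs are "active". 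Summing over $d \in D(G)$ gives the claimed bound.

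The main obstacle I expect is making the embedding step fully rigorous while controlling which $2$-factors pass through a given vertex: the bound $\binom{\Delta/2}{d/2}$ requires that the palette of a degree-$d$ vertex be exactly a union of $d/2$ of the fixed color pairs, which is automatic from the construction, but one must be careful that the $2$-factor decomposition of $H$ restricts to a decomposition of $G$ into partial $2$-factors (paths and cycles) --- this is immediate since $F_i \cap G$ is just a subgraph of the $2$-regular $F_i$, and at a vertex $v \in V(G)$ either both, one, or neither $F_i$-edges lie in $G$; in fact since $d_G(v)$ is even and $F_i$ is the disjoint union of cycles, at each $v$ the two $F_i$-edges are either both in $G$ or both outside $G$ is \emph{not} guaranteed in general --- so the honest argument should instead note that each $F_i$ contributes either the pair $\{2i-1,2i\}$, a single one of these colors, or nothing to the palette of $v$ in $G$, and then argue that across all vertices of degree $d$ the palette is still determined by the choice of which $d/2$ indices $i$ are "fully active". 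One clean way to sidestep this subtlety is to choose the embedding so that, within each $2$-factor $F_i$, the edges of $G$ form a union of full cycles of $F_i$ rather than proper sub-paths; this can be arranged by building $H$ from two disjoint copies of $G$ joined by parallel edges so that every $2$-factor meets $G$ in a set of whole cycles. I would adopt that construction to keep the counting transparent.

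With the embedding fixed so that each $F_i \cap G$ is a (possibly empty) union of cycles of $F_i$, the palette of a vertex $v$ of degree $d$ in $G$ is precisely $\bigcup_{i \in I_v} \{2i-1, 2i\}$ for some $I_v \subseteq \{1,\dots,\Delta/2\}$ with $|I_v| = d/2$, and there are at most $\binom{\Delta/2}{d/2}$ possibilities for $I_v$. Therefore the total number of distinct palettes in this coloring is at most $\sum_{d \in D(G)} \binom{\Delta(G)/2}{d/2}$, which yields $\check s(G) \le \sum_{d \in D(G)} \binom{\Delta(G)/2}{d/2}$ as desired. $\qed$
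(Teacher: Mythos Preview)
Your overall strategy matches the paper's: embed $G$ in a $\Delta$-regular multigraph, apply Petersen's theorem, $2$-color each $2$-factor with its own pair of colors, and count the possible palettes. You also correctly isolate the one real difficulty: for the count $\binom{\Delta/2}{d/2}$ to be valid, each $2$-factor must meet $G$ at every vertex in degree $0$ or $2$, never $1$.

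The gap is in your fix. You assert that taking two disjoint copies of $G$ joined by $\Delta - d(v)$ parallel edges forces every $2$-factor of $H$ to meet $G_1$ in a union of whole cycles. This is false. Take $G = K_{2,4}$ with parts $\{a,b\}$ and $\{c,d,e,f\}$: in the resulting $4$-regular $H$ one legitimate $2$-factor contains the $6$-cycle $a_1\, c_1\, c_2\, a_2\, e_2\, e_1\, a_1$ (together with the analogous cycle through $b_1,d_1,d_2,b_2,f_2,f_1$), and its intersection with $G_1$ consists only of the edges $a_1c_1$ and $a_1e_1$. Then $c_1$ has degree $1$ in $F_i \cap G_1$ and receives only one of the colors $2i-1,\,2i$, destroying the ``palette is a union of color pairs'' structure. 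Petersen's theorem merely guarantees that \emph{some} $2$-factorization exists; it says nothing about how the factors sit relative to the subgraph $G_1$, and you give no argument that a decomposition with the needed property can be selected.

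The remedy is precisely the construction you mentioned first and then set aside as something to avoid: loops. The paper forms $G^{\star}$ on $V(G)$ by attaching $\tfrac{\Delta}{2}-k$ loops at each vertex of degree $2k$. Because a loop contributes $2$ to the degree, in any $2$-factor $F_i$ of $G^{\star}$ a vertex is covered either by a single loop or by two non-loop edges, and every non-loop edge of $G^{\star}$ is already an edge of $G$. Deleting the loops therefore leaves each $F'_i$ with every vertex at degree $0$ or $2$; since $G$ is bipartite, $F'_i$ is a disjoint union of even cycles, hence properly $2$-colorable with $\{2i-1,2i\}$, and the palette of a degree-$2k$ vertex is exactly a union of $k$ of these pairs. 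That gives the bound directly.
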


\begin{proof}
	For the proof, we construct a new multigraph $G^{\star}$ as follows: for each vertex $u \in
V(G)$ of degree $2k$, we add $\frac{\Delta(G)}{2}-k$ loops at $u$ $\left(1\leq k< \frac{\Delta(G)}{2}\right)$. Clearly, $G^{\star}$ is a $\Delta(G)$-regular
multigraph. By Petersen's theorem, $G^{\star}$ can be represented as a union
of edge-disjoint $2$-factors $F_{1},\ldots,F_{\frac{\Delta(G)}{2}}$. By removing all
loops from $2$-factors $F_{1},\ldots,F_{\frac{\Delta(G)}{2}}$ of $G^{\star}$, we
obtain that the resulting graph $G$ is a union of edge-disjoint
even subgraphs $F^{\prime}_{1},\ldots,F^{\prime}_{\frac{\Delta(G)}{2}}$. Since
$G$ is bipartite, for each $i$ $\left(1\leq i\leq \frac{\Delta(G)}{2}\right)$,
$F^{\prime}_{i}$ is a collection of even cycles in $G$, and
we can properly color the edges of $F^{\prime}_{i}$ alternately with colors
$2i-1$ and $2i$; the obtained coloring $\alpha$
is a proper edge coloring of $G$ with colors $1,\ldots,\Delta(G)$. 

Now, if $u\in V(G)$ and $d_{G}(u)=2k$,
then there are $k$
even subgraphs $F^{\prime}_{i_{1}},F^{\prime}_{i_{2}},\ldots,F^{\prime}_{i_{k}}$ such that
$d_{F^{\prime}_{i_{1}}}(u)=d_{F^{\prime}_{i_{2}}}(u)=\cdots=d_{F^{\prime}_{i_{k}}}(u)=2$, and thus 
$S_{G}(u,\alpha)=\{2i_{1}-1,2i_{1},2i_{2}-1,2i_{2},\ldots,2i_{k}-1,2i_{k}\}$.
This implies that for vertices $u\in
V(G)$ with $d_{G}(u)=2k$,
we have at most $\binom{\frac{\Delta(G)}{2}}{k}$ distinct palettes
in the coloring $\alpha$.  
\end{proof}

In the next two sections, we shall see that Theorem \ref{prop:bipevendeg}
can in fact be used to deduce sharp upper bounds on the palette
index of some classes of bipartite graphs.

\bigskip

From a given bipartite graph $G$ we can construct an even supergraph 
$G'$ by taking two vertex-disjoint copies $G_1$ and $G_2$
of $G$ and for every odd-degree vertex of $G_1$ joining it by an 
edge with its copy in $G_2$. By applying the preceding proposition to 
$G'$ we immediately obtain the following.

\begin{corollary}
\label{prop:bipodddegree}
	If $G$ is a bipartite graph,
		then
	$$\check s(G) \leq \sum_{d \in D^{\text{odd}}(G)} 
	\binom{\left\lceil\frac{\Delta(G)}{2}\right\rceil}{\frac{d+1}{2}}
	\times  (d+1) 
	+
	\sum_{d \in D^{\text{even}}(G)} 
	\binom{\left\lceil\frac{\Delta(G)}{2}\right\rceil}{\frac{d}{2}}.$$
\end{corollary}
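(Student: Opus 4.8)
The plan is to apply Theorem~\ref{prop:bipevendeg} to the even bipartite supergraph $G'$ described just above the statement, and then transfer the resulting edge coloring back to $G$. First I would check that $G'$ is a legitimate input for Theorem~\ref{prop:bipevendeg}: writing the bipartition of $G$ as $(X,Y)$ and denoting the corresponding parts of the copies $G_i$ by $(X_i,Y_i)$, one verifies that $(X_1\cup Y_2,\,Y_1\cup X_2)$ is a bipartition of $G'$, since every edge inside $G_1$ or $G_2$ respects it and the added edge joining an odd-degree vertex of $X_1$ (resp.\ $Y_1$) to its copy runs between $X_1$ and $X_2$ (resp.\ $Y_1$ and $Y_2$). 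Moreover, a vertex of odd degree $d$ in $G$ acquires exactly one extra incident edge in each copy, so it has degree $d+1$ in $G'$, while even-degree vertices keep their degree; hence $G'$ is even, has no isolated vertices, and $\Delta(G')=2\lceil\Delta(G)/2\rceil$. Theorem~\ref{prop:bipevendeg} then yields a proper edge coloring $\alpha$ of $G'$ in which, for each even $d$, the vertices of $G'$ of degree $d$ realize at most $\binom{\lceil\Delta(G)/2\rceil}{d/2}$ distinct palettes.

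Next I would restrict $\alpha$ to the copy $G_1\cong G$, obtaining a proper edge coloring $\beta$ of $G$, and bound the number of distinct palettes of $\beta$. If $v$ has even degree $d$ in $G$, then $v$ has degree $d$ in $G_1\subseteq G'$, so $S_G(v,\beta)=S_{G'}(v,\alpha)$, and over all such $v$ this accounts for at most $\binom{\lceil\Delta(G)/2\rceil}{d/2}$ palettes. If $v$ has odd degree $d$ in $G$, then $v$ has degree $d+1$ in $G_1$, exactly one of its incident edges, say $e_v$, being the edge added in forming $G'$; deleting $e_v$ gives $S_G(v,\beta)=S_{G'}(v,\alpha)\setminus\{\alpha(e_v)\}$. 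Since $S_{G'}(v,\alpha)$ ranges over at most $\binom{\lceil\Delta(G)/2\rceil}{(d+1)/2}$ sets, each of size $d+1$, and $S_G(v,\beta)$ is obtained from such a set by removing one of its $d+1$ colors, the degree-$d$ vertices of $G$ realize at most $(d+1)\binom{\lceil\Delta(G)/2\rceil}{(d+1)/2}$ palettes in $\beta$.

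Summing these bounds over all $d\in D(G)$ gives an upper bound on the number of distinct palettes occurring in $\beta$, and since $\check s(G)$ is the minimum over all proper edge colorings, this yields the stated inequality. The argument is short, and I expect the only genuine care to be bookkeeping: verifying that $G'$ is bipartite with $\Delta(G')=2\lceil\Delta(G)/2\rceil$ (so that the ceiling appears in the binomial coefficients), and correctly accounting for the effect of deleting the single added edge at each odd-degree vertex, which is precisely what produces the multiplicative factor $d+1$. Note also that if two distinct degrees of $G$ happen to give rise to the same degree in $G'$, the corresponding terms are merely counted twice, which only weakens the bound and is therefore harmless.
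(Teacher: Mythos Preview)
Your proposal is correct and follows exactly the approach of the paper: apply Theorem~\ref{prop:bipevendeg} to the even supergraph $G'$ and restrict the resulting coloring to $G$, noting that for odd-degree vertices the palette in $G'$ loses one of its $d+1$ colors. The paper's proof is only a few lines and leaves implicit the verification of bipartiteness, of $\Delta(G')=2\lceil\Delta(G)/2\rceil$, and of the degree-collision overcount, all of which you spell out correctly.
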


\begin{proof}
	Consider the graph $G'$ defined above, and a proper edge coloring
	$\alpha$ of $G'$ defined as in the proof of Theorem \ref{prop:bipevendeg}.	
	For each palette $S_{G'}(v, \alpha)$ in $G'$, where $v \in D^{\text{odd}}(G)$,
	there are at most $(d_G(v)+1)$ possible palettes in
	the restriction of $\alpha$ to $G$.
\end{proof}

Using Corollary \ref{prop:bipodddegree}, we deduce an improvement of the
general upper bound $2^{\Delta(G)}-1$
on the palette index of any bipartite graph.

\begin{corollary}
\label{cor:bipevendeg}
	For any bipartite graph $G$, $\check s(G) \leq 
	(\Delta(G)+2)2^{\left\lceil\Delta(G)/2 \right\rceil}$.
\end{corollary}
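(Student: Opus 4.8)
The plan is to apply Corollary~\ref{prop:bipodddegree} directly and then estimate the resulting sum crudely. Write $n=\lceil\Delta(G)/2\rceil$ for brevity. Corollary~\ref{prop:bipodddegree} gives
$$\check s(G)\le\sum_{d\in D^{\text{odd}}(G)}\binom{n}{\frac{d+1}{2}}(d+1)+\sum_{d\in D^{\text{even}}(G)}\binom{n}{\frac{d}{2}}.$$
First I would bound each factor $(d+1)$ in the first sum by $\Delta(G)+1$, which is legitimate since every $d\in D^{\text{odd}}(G)$ satisfies $d\le\Delta(G)$, obtaining
$$\check s(G)\le(\Delta(G)+1)\sum_{d\in D^{\text{odd}}(G)}\binom{n}{\frac{d+1}{2}}+\sum_{d\in D^{\text{even}}(G)}\binom{n}{\frac{d}{2}}.$$

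Next I would observe that $d\mapsto(d+1)/2$ is injective on the odd integers and maps every element of $D^{\text{odd}}(G)$ into $\{1,\dots,n\}$; one checks this according to the parity of $\Delta(G)$, since if $\Delta(G)$ is even then $d\le\Delta(G)-1$ forces $(d+1)/2\le\Delta(G)/2=n$, and if $\Delta(G)$ is odd then $(d+1)/2\le(\Delta(G)+1)/2=n$. Hence the binomial coefficients in the first sum sit in pairwise distinct positions of row $n$ of Pascal's triangle, so $\sum_{d\in D^{\text{odd}}(G)}\binom{n}{(d+1)/2}\le\sum_{j=0}^{n}\binom{n}{j}=2^{n}$. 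The same reasoning, using injectivity of $d\mapsto d/2$ on the even integers together with $d/2\le n$, gives $\sum_{d\in D^{\text{even}}(G)}\binom{n}{d/2}\le2^{n}$. Substituting both estimates into the previous display yields $\check s(G)\le(\Delta(G)+1)2^{n}+2^{n}=(\Delta(G)+2)2^{\lceil\Delta(G)/2\rceil}$, which is the claim.

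There is no genuine obstacle here: the whole argument is a short estimate once Corollary~\ref{prop:bipodddegree} is available, and the only point demanding a little care is checking that the indices $(d+1)/2$ and $d/2$ never exceed $n=\lceil\Delta(G)/2\rceil$, so that a sum of distinct entries of a single binomial row may be bounded by the full row sum $2^{n}$. (One could be sharper by retaining the factor $d+1=2\cdot\frac{d+1}{2}$ inside the sum and invoking the identity $\sum_{j}j\binom{n}{j}=n2^{n-1}$, which improves the bound to $(\lceil\Delta(G)/2\rceil+1)2^{\lceil\Delta(G)/2\rceil}$; but the stated bound already suffices for the applications towards Conjecture~\ref{conj:Avesani}.)
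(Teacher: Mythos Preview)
Your proof is correct and follows exactly the approach indicated in the paper: the corollary is stated there as an immediate consequence of Corollary~\ref{prop:bipodddegree}, and your estimate---replacing each factor $d+1$ by $\Delta(G)+1$ and bounding each of the two binomial sums by a full row sum $2^{\lceil\Delta(G)/2\rceil}$---is precisely the computation the paper leaves implicit. Your check that the indices $(d+1)/2$ and $d/2$ stay within $\{0,\dots,n\}$ is the only point requiring care, and you handle it correctly.
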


\bigskip

As noted above, the palette index of a regular Class 1 graph is $1$.
We note that Corollary \ref{prop:bipodddegree} 
implies that Conjecture \ref{conj:Avesani}
holds for
bipartite graphs that are ``almost
regular'' in the sense that if
$G$ is a bipartite graph where
all vertex degrees are in the set
	$\{1,2,3,4,2r-4,2r-3,2r-2, 2r-1, 2r\}$, for some $r \geq 4$,
then $G$ satisfies Conjecture \ref{conj:Avesani}.
For general graphs, a slightly weaker proposition is true.

\begin{proposition}
\label{prop:cycdef}
	If a graph $G$ satisfies that $\Delta(G)-\delta(G) \leq 2$,
	then $\check s(G) \leq \Delta^2(G) + \Delta(G) +1$.
\end{proposition}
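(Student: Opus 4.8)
The plan is to split into cases according to $\delta(G)$ and $\chi'(G)$, disposing of all but one case by a direct counting argument and isolating a single structural lemma for the last case. Write $\Delta=\Delta(G)$. If $\delta(G)=\Delta$, then $G$ is regular and $\check s(G)\le\Delta+1$ by Vizing's theorem, since every vertex misses at most one colour in a proper $(\Delta+1)$-edge-colouring. If $\chi'(G)=\Delta$, fix a decomposition of $E(G)$ into $\Delta$ matchings; the palette of a vertex of degree $d\in\{\Delta-2,\Delta-1,\Delta\}$ is a $d$-subset of $\{1,\dots,\Delta\}$, so $\check s(G)\le\binom{\Delta}{\Delta}+\binom{\Delta}{\Delta-1}+\binom{\Delta}{\Delta-2}=1+\Delta+\binom{\Delta}{2}\le\Delta^2+\Delta+1$. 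If $\delta(G)=\Delta-1$ (so all degrees are $\Delta-1$ or $\Delta$), fix a proper $(\Delta+1)$-edge-colouring; a palette of a degree-$\Delta$ vertex is $\{1,\dots,\Delta+1\}$ with one colour deleted and a palette of a degree-$(\Delta-1)$ vertex is $\{1,\dots,\Delta+1\}$ with two colours deleted, so $\check s(G)\le(\Delta+1)+\binom{\Delta+1}{2}=\binom{\Delta+2}{2}\le\Delta^2+\Delta+1$. Hence we may assume $\delta(G)=\Delta-2$ and $\chi'(G)=\Delta+1$.

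In this remaining case the naive Vizing bound is cubic, so the point is to prove the following claim: \emph{$G$ admits a proper $(\Delta+1)$-edge-colouring in which no edge coloured $\Delta+1$ is incident to a vertex of degree $\Delta-2$.} Granting this and calling such a colouring $\varphi$: a degree-$\Delta$ vertex has $S(v,\varphi)=\{1,\dots,\Delta+1\}\setminus\{c\}$, giving at most $\Delta+1$ palettes; a degree-$(\Delta-1)$ vertex has a palette obtained from $\{1,\dots,\Delta+1\}$ by deleting two colours, at most $\binom{\Delta+1}{2}$ possibilities; and a degree-$(\Delta-2)$ vertex has $S(v,\varphi)\subseteq\{1,\dots,\Delta\}$ of size $\Delta-2$, at most $\binom{\Delta}{2}$ possibilities. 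Summing, $\check s(G)\le(\Delta+1)+\binom{\Delta+1}{2}+\binom{\Delta}{2}=(\Delta+1)+\Delta^2=\Delta^2+\Delta+1$, which matches the asserted bound exactly (so the hypothesis of the claim cannot be weakened within this argument).

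To prove the claim, let $W$ be the set of vertices of degree $\Delta-2$, and colour $G$ in stages. First colour $G[W]$ with colours from $\{1,\dots,\Delta\}$; this is possible since $G[W]$ has maximum degree at most $\Delta-2$, so $\chi'(G[W])\le\Delta-1$. Next colour the spanning subgraph formed by the edges with no endpoint in $W$ using colours from $\{1,\dots,\Delta+1\}$, by Vizing's theorem (that subgraph has maximum degree at most $\Delta$). It then remains to colour the bipartite graph $B$ of edges joining $W$ to $V(G)\setminus W$ using only $\{1,\dots,\Delta\}$, each edge $wu$ (with $w\in W$) still being allowed any colour of $\{1,\dots,\Delta\}$ not yet appearing at $w$ or at $u$. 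The aim is to finish via Galvin's list edge-colouring theorem for bipartite graphs, so one must verify that the list at each edge of $B$ has size at least the larger of the $B$-degrees of its endpoints; the degree deficiency $\Delta-d_G(v)\ge 0$ at endpoints and the extra slack $2$ at vertices of $W$ are what should make this close out.

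The step I expect to be the main obstacle is precisely this last extension: a vertex $u\notin W$ of degree $\Delta$ with many neighbours in $W$ leaves essentially no slack in the lists on the edges of $B$ at $u$, and if such a neighbour $w\in W$ also carries several edges of $G[W]$, the relevant list can drop below what Galvin's theorem requires. Pushing the argument through should require controlling \emph{which} colours are used in the earlier stages — e.g.\ colouring $G[W]$ and then $B$ so as to keep colour $\Delta+1$ available and to spread out the colours used at the high-degree vertices of $V(G)\setminus W$ — or, alternatively, replacing the staged construction by Kempe swaps on two-coloured alternating paths, starting from an arbitrary proper $(\Delta+1)$-edge-colouring and repeatedly pushing colour $\Delta+1$ off the vertices of $W$ one at a time; there the delicate point is to order the swaps, or restrict their endpoints, so that repairing one vertex of $W$ does not reintroduce colour $\Delta+1$ at another.
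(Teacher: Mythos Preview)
Your handling of the easy cases and the palette counts are all correct, and your reduction isolates the right difficulty. But the key claim --- that $G$ admits a proper $(\Delta+1)$-edge-colouring in which colour $\Delta+1$ misses every vertex of degree $\Delta-2$ --- is never proved, and this is the entire content of the hard case. Your staged Galvin approach does not close: for an edge $wu\in B$ with $w\in W$ and $d_G(u)=\Delta$, the list of colours in $\{1,\dots,\Delta\}$ available at both ends has size at least $2+d_B(w)+d_B(u)-\Delta$, which can fall below $\max(d_B(w),d_B(u))$ already for $\Delta\ge 4$ (take $d_B(w)=\Delta-2$, $d_B(u)=1$). The Kempe-chain alternative is only a wish: a swap pushing colour $\Delta+1$ off one $w\in W$ can terminate at another vertex of $W$, and you offer no ordering or invariant to control this. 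As written, the proposal has a genuine gap at its central structural step.

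The paper sidesteps your claim with a different decomposition. Instead of removing a single matching, it removes a set $\hat M$ that is $2$-edge-colourable (with colours $\Delta,\Delta+1$) and such that $G-\hat M$ is $(\Delta-1)$-edge-colourable. The construction runs: take a maximum matching $M$ in $G[V_\Delta]$; then in $H=G-M$ no two vertices of degree $\Delta$ are adjacent, so by Fournier's theorem $H$ is Class~1 and hence contains a minimum matching $M'$ covering all its degree-$\Delta$ vertices; finally take a maximum matching $M''$ among the degree-$(\Delta-1)$ vertices of $H-M'$, and set $\hat M=M\cup M'\cup M''$. One then verifies that the subgraph induced by $\hat M$ is $2$-edge-colourable and that $G-\hat M$ has chromatic index at most $\Delta-1$. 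Each vertex of degree $\Delta-2$ or $\Delta-1$ meets at most one edge of $\hat M$, and each vertex of degree $\Delta$ meets one or two; the resulting palette count again yields exactly $\Delta^2+\Delta+1$. The structural input is Fournier's theorem rather than list colouring or Kempe chains.
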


The proof of this proposition is along the same lines as the proof of
Theorem 5.9 in \cite{AsratianCasselgrenPetrosyanCycDef};
for the sake of completeness, we provide a brief sketch here.

\begin{proof}[(sketch)]
	If $\Delta(G)-\delta(G) \leq 1$, or $G$ is Class 1, 
	then the proposition clearly holds; indeed if $G$ is Class 1,
	then $\check s(G) \leq \binom{\Delta(G)}{2} + \Delta(G)+1 
	\leq \Delta^2(G) + \Delta(G) +1$.
	
	So assume that $\Delta(G) = \delta(G)+2$, and that $G$ is Class 2.
	Set $k = \Delta(G)$ and denote by $V_i$ the set of vertices in 
	$G$ that have degree $i$.
	
	Let 
	$M$ be a maximum matching of $G[V_{k}]$.
	Set $H = G-M$. Note that in $H$ no two vertices
	of degree $k$ in $H$ are adjacent, so 
	by a well-known result due to  \cite{Fournier},
	$H$ is Class 1.
	Let $M'$ be a minimum matching in $H$
	covering all vertices of degree $k$ in $H$; such a matching
	exists since $H$ is Class 1. Note that the graph  $J=H-M'$ has maximum
	degree at most $k-1$. Let $M''$ be a maximum matching in $J_{k-1}$,
	where $J_{k-1}$ is the subgraph of $J$ induced by the vertices of 
	degree $k-1$ in $J$.  Let $\hat M = M \cup M' \cup M''$.
	The rest of the proof is based on the following two claims,
	the proofs of which are omitted (for details, see 
	\cite{AsratianCasselgrenPetrosyanCycDef}).

	\begin{claim}
	\label{cl:GM}
				The subgraph of $G$ induced by $\hat M$ is $2$-edge-colorable.
	\end{claim}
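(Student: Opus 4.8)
The plan is to show that $\hat M = M \cup M' \cup M''$ decomposes into two matchings, which is equivalent to saying that the subgraph $G[\hat M]$ induced by these edges contains no odd cycle and has maximum degree at most $2$. The degree bound is immediate: each of $M$, $M'$, $M''$ contributes at most one edge at any vertex, so $\Delta(G[\hat M]) \le 3$ a priori, but we will argue that in fact no vertex is incident with edges from all three of $M$, $M'$, $M''$ simultaneously. Indeed, $M$ is a matching in $G[V_k]$, so it only covers vertices of degree $k$ in $G$; after deleting $M$, such a vertex has degree $k-1$ in $H$, hence is \emph{not} among the degree-$k$ vertices of $H$ that $M'$ is required to cover, and it will not be covered by the \emph{minimum} matching $M'$ (a minimum matching covering a specified set covers nothing outside a neighborhood of that set, but here one must be slightly careful — the cleanest route is to note that $M'$ covers only vertices of degree $k$ in $H$ together with their $M'$-partners, and a vertex covered by $M$ has an $M'$-partner only if that partner had degree $k$ in $H$; one then checks this partner is not touched by $M''$). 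Similarly a vertex incident with an $M''$-edge had degree $k-1$ in $J = H - M'$, so it was covered by $M'$, hence it had degree $< k$ in $H$, hence it was covered by $M$ only if\ldots{} — the point is that the three matchings are pairwise ``staggered'' by one degree level, and a careful case analysis on the degree of a vertex in $G$, $H$, and $J$ shows $\Delta(G[\hat M]) \le 2$.

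Granting the degree bound, $G[\hat M]$ is a disjoint union of paths and cycles, and a connected component fails to be $2$-edge-colorable exactly when it is an odd cycle. Since $M$, $M'$, $M''$ are each matchings, any cycle in $G[\hat M]$ alternates between edges of different matchings; the only way to build a cycle out of at most three matchings is to cyclically repeat a pattern, and an odd cycle would force some matching to appear on two consecutive edges — impossible. More carefully: orient the cycle and read off, along its edges, the labels from $\{M, M', M''\}$; consecutive labels differ, so the label sequence around an odd cycle must use all three labels and cannot be properly $2$-colored only if three labels genuinely alternate, but then \emph{two} of the three matchings together already form the even-length ``skeleton'' and the third interleaves — again one shows a repeated label on consecutive edges. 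Thus every cycle in $G[\hat M]$ is even, and a disjoint union of paths and even cycles is $2$-edge-colorable.

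The main obstacle is the degree bound $\Delta(G[\hat M]) \le 2$: it rests on the precise, somewhat delicate interplay between ``maximum matching in $G[V_k]$'', ``minimum matching in $H$ covering the degree-$k$ vertices of $H$'', and ``maximum matching in $J_{k-1}$'', and on tracking how the degree of each vertex drops by one as we pass $G \to H \to J$. The extremality of $M'$ (minimality) is what prevents a degree-$k$-in-$G$ vertex already covered by $M$ from being covered again, and the restriction of $M''$ to $J_{k-1}$ is what keeps $M''$ away from vertices that have already lost two incident edges. Once this bookkeeping is in place — and it is exactly the argument in \cite{AsratianCasselgrenPetrosyanCycDef}, which is why we omit the full details — the colorability conclusion follows from the elementary parity observation above.
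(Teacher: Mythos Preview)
The paper gives no proof of this claim at all --- it explicitly defers to \cite{AsratianCasselgrenPetrosyanCycDef} --- so there is nothing to compare against directly. Let me assess your sketch on its own.

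Your degree bound $\Delta(G[\hat M])\le 2$ is correct, but the justification you offer is not. You assert that a vertex covered by $M$ ``will not be covered by the minimum matching $M'$''; this is false. Every edge of $M'$ joins a vertex of $H$-degree $k$ to a partner of smaller $H$-degree, and that partner may certainly be $M$-saturated. The clean one-line argument is: if $v$ is covered by both $M$ and $M'$ then $d_G(v)=k$, $d_H(v)=k-1$, $d_J(v)=k-2$, hence $v\notin V(J_{k-1})$ and $v$ is not covered by $M''$. Minimality of $M'$ plays no role here.

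The genuine gap is your parity step. You claim that in an odd cycle of $G[\hat M]$ some matching must appear on two consecutive edges. This is simply false: a triangle whose three edges come from $M$, $M'$, $M''$ respectively has no two consecutive edges in the same matching, and your ``more careful'' paragraph does not exclude it. Worse, such a triangle is \emph{not} ruled out by the degree bookkeeping. Take $V_k=\{u,v,w\}$ with $uvw$ a triangle in $G$; then $M=\{uv\}$ is a maximum matching of $G[V_k]$, only $w$ has degree $k$ in $H$, $M'=\{vw\}$ is a minimum matching covering it, and in $J$ one has $d_J(u)=d_J(w)=k-1$, $d_J(v)=k-2$, so $uw\in E(J_{k-1})$ and a maximum matching $M''$ of $J_{k-1}$ may well contain $uw$. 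Then $\hat M=\{uv,vw,wu\}$ is a triangle and is not $2$-edge-colorable.

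So your ``elementary parity observation'' does not finish the proof; excluding odd cycles (or otherwise obtaining the $2$-edge-coloring) requires a finer argument --- presumably a more careful choice of the matchings, exploiting their extremality --- and that is precisely what the paper outsources to \cite{AsratianCasselgrenPetrosyanCycDef}. You have the difficulty inverted: the degree bound is the easy half, and the bipartiteness of $G[\hat M]$ is where the real work lies.
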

	
		\begin{claim}
	\label{cl:G-M}
				The graph $G - \hat M$ is $(k-1)$-edge-colorable.
	\end{claim}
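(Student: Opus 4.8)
The plan is to deduce Claim~\ref{cl:G-M} from the result of \cite{Fournier} already invoked above for $H$, by showing that the vertices of maximum degree in $G-\hat M$ form an independent set. Recall that it has been observed that $\Delta(J)\le k-1$, where $J=H-M'$; since $G-\hat M=J-M''$ arises from $J$ by deleting the edges of the matching $M''$, and deleting edges cannot raise any degree, we also have $\Delta(G-\hat M)\le k-1$. The key consequence I would extract first is a description of the vertices of degree exactly $k-1$ in $G-\hat M$: if $d_{G-\hat M}(v)=k-1$, then $v$ must already have had degree $k-1$ in $J$ (so $v\in V(J_{k-1})$) and must be incident with no edge of $M''$ (otherwise deleting $M''$ would have decreased its degree).

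Next I would prove that the set $W$ of degree-$(k-1)$ vertices of $G-\hat M$ is independent in $G-\hat M$. Suppose, for contradiction, that $u,v\in W$ are joined by an edge $e$ of $G-\hat M$. Then $e\in E(J)$, and since both $u$ and $v$ lie in $V(J_{k-1})$, the edge $e$ belongs to the induced subgraph $J_{k-1}$. By the previous paragraph neither $u$ nor $v$ is incident with an edge of $M''$, so $M''\cup\{e\}$ is a matching of $J_{k-1}$ properly containing $M''$, contradicting the maximality of $M''$. Hence $W$ is independent, so in particular the subgraph of $G-\hat M$ induced by its maximum-degree vertices is a forest. By the result of \cite{Fournier}, $G-\hat M$ is then Class~1, and since $\Delta(G-\hat M)\le k-1$ this yields a proper $(k-1)$-edge-coloring of $G-\hat M$, which is precisely Claim~\ref{cl:G-M}. (If $\Delta(G-\hat M)<k-1$ the claim is of course immediate from Vizing's theorem, so the only real case is $\Delta(G-\hat M)=k-1$.)

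The argument is short, and the one point that requires care is the degree bookkeeping: one has to be sure that a vertex of degree $k-1$ in $G-\hat M$ is genuinely a vertex of $V(J_{k-1})$ left unsaturated by $M''$, because it is exactly for such vertices that the maximality of $M''$ can be exploited. I expect no further obstacle.

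Finally, for completeness I would indicate how Claims~\ref{cl:GM} and~\ref{cl:G-M} combine to finish Proposition~\ref{prop:cycdef}. Using Claim~\ref{cl:G-M}, color $G-\hat M$ with colors $1,\dots,k-1$, and using Claim~\ref{cl:GM}, color the subgraph induced by $\hat M$ with the two further colors $k$ and $k+1$, obtaining a proper $(k+1)$-edge-coloring of $G$. A vertex $v$ with $d_G(v)\le k-1$ misses $M$ entirely and is saturated by at most one of $M'$ and $M''$ (if $v$ meets $M'$ then $d_J(v)\le k-2$, so $v\notin V(J_{k-1})$ and $v$ misses $M''$), hence $d_{\hat M}(v)\le 1$; in general $d_{\hat M}(v)\le 2$. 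The palette of $v$ is then determined by $d_{\hat M}(v)$ together with the set of colors of $\{1,\dots,k-1\}$ absent at $v$, so grouping vertices according to $d_G(v)\in\{k-2,k-1,k\}$ gives at most $k+1$, $2k-1$, and $(k-1)^2$ palettes respectively, whence $\check s(G)\le (k+1)+(2k-1)+(k-1)^2=k^2+k+1=\Delta^2(G)+\Delta(G)+1$.
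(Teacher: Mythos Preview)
Your proof of Claim~\ref{cl:G-M} is correct. The paper itself omits the argument (deferring to \cite{AsratianCasselgrenPetrosyanCycDef}), but the route you take --- observing that $G-\hat M=J-M''$, that any vertex of degree $k-1$ in $J-M''$ must lie in $V(J_{k-1})$ and be unsaturated by $M''$, and that the maximality of $M''$ then forbids two such vertices from being adjacent, so Fournier's theorem applies --- is the natural one and is essentially what the cited reference does.

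One small slip in your closing paragraph (which goes beyond the claim itself): the palette counts for degree-$(k-2)$ and degree-$k$ vertices are interchanged. A degree-$k$ vertex meets $\hat M$ in one or two edges, giving at most $2+(k-1)=k+1$ palettes, whereas a degree-$(k-2)$ vertex meets $\hat M$ in at most one edge, giving at most $(k-1)+2\binom{k-1}{2}=(k-1)^2$ palettes. Since the total $(k-1)^2+(2k-1)+(k+1)=k^2+k+1$ is unchanged, the conclusion of Proposition~\ref{prop:cycdef} is unaffected.
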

	
	Let $\psi$ be a proper $(k-1)$-edge coloring of $G -\hat M$
	using colors $1,\dots k-1$, and let $\varphi$ be a proper 
	$2$-edge coloring of the subgraph of  $G$ 
	induced by $\hat M$ using colors $k$ and $k+1$.
	Denote by $\alpha$ the edge coloring of $G$ obtained 
	by taking the two edge colorings $\psi$ and $\varphi$ together.
	
	Since a vertex of degree $k-2$ in $G$ is incident with at
	most one edge from $\hat M$, there are $2\binom{k-1}{k-3} + (k-1)$ 
	possible palettes under $\alpha$;
	a vertex of degree $k-1$ in $G$ is incident with at most
	one edge from $\hat M$ and thus there are most $2(k-1) +1$ possible palettes
	under $\alpha$;
	a vertex of degree $k$ in $G$ is incident with one or two edges from $\hat M$
	and thus there are at most $2+(k-1)$ possible palettes.
\end{proof}

Finally, let us remark that every graph where all vertex degrees  are in the set
$\{1,2,r-2, r-1, r\}$, for some $r\geq 5$,
also satisfies 
Conjecture \ref{conj:Avesani}.\\


\section{Bipartite graphs with small vertex degrees}

In this section we consider bipartite graphs with small
vertex degrees.
As above, throughout this section we assume that all graphs do not
contain any isolated vertices.
We begin this section by noting some immediate implications of Theorem
\ref{prop:bipevendeg}.

\begin{corollary}
\label{cor:bipeuldeg4}
	If $G$ is an Eulerian bipartite graph with $\Delta(G)=4$, then
	$\check s(G) \leq 3$.
\end{corollary}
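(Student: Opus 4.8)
The plan is to observe that this is an immediate consequence of Theorem \ref{prop:bipevendeg}. First I would note that since $G$ is Eulerian, every vertex of $G$ has even degree, so $G$ is an even bipartite graph. Combined with the standing assumption that $G$ has no isolated vertices and the hypothesis $\Delta(G) = 4$, this forces $D(G) \subseteq \{2, 4\}$, with $4 \in D(G)$.

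Next I would plug directly into the bound of Theorem \ref{prop:bipevendeg} with $\Delta(G)/2 = 2$. In the worst case $D(G) = \{2,4\}$, and the sum on the right-hand side becomes $\binom{2}{1} + \binom{2}{2} = 2 + 1 = 3$; in the remaining case $D(G) = \{4\}$ the graph is $4$-regular and the sum is merely $\binom{2}{2} = 1$. In either case we conclude $\check s(G) \le 3$, as desired.

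There is no genuine obstacle here, since all the work is carried by Theorem \ref{prop:bipevendeg}; the only thing to be careful about is the book-keeping of which degrees can occur — ruling out odd degrees via the Eulerian hypothesis and degree $0$ via the no-isolated-vertices convention — after which the binomial sum evaluates to $3$.
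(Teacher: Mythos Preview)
Your proof is correct and matches the paper's approach exactly: the paper states this corollary as an immediate implication of Theorem~\ref{prop:bipevendeg} with no further argument, and your computation of the binomial sum for $D(G)\subseteq\{2,4\}$ is precisely what is intended.
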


If $G$ is bipartite, Eulerian, has maximum degree $4$, and there
is a vertex of degree $4$ in $G$ which 
is adjacent to at least three vertices of degree two,
then $\check s(G) \geq 3$; for instance $\check s(K_{2,4}) \geq 3$, so
the upper bound in Corollary \ref{cor:bipeuldeg4}
is sharp.

\begin{corollary}
\label{cor:bipdeg4}
	If $G$ is a bipartite graph with $\Delta(G) =4$, then $\check s(G) \leq 11$.
	Moreover, if $G$ has no pendant vertices, then $\check s(G) \leq 7$.
\end{corollary}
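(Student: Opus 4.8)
The plan is to apply Corollary \ref{prop:bipodddegree} with $\Delta(G) = 4$, so that $\lceil \Delta(G)/2 \rceil = 2$, and then bound the resulting sum. For a bipartite graph with maximum degree $4$, the possible degrees are $1, 2, 3, 4$; the odd degrees that may occur are $1$ and $3$, and the even degrees are $2$ and $4$. Plugging into the formula of Corollary \ref{prop:bipodddegree}, the contribution from a vertex of degree $d = 1$ is $\binom{2}{1}\cdot 2 = 4$, from $d = 3$ it is $\binom{2}{2}\cdot 4 = 4$, from $d = 2$ it is $\binom{2}{1} = 2$, and from $d = 4$ it is $\binom{2}{2} = 1$. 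Summing over all four possible degrees gives $4 + 4 + 2 + 1 = 11$, which establishes the first bound.

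For the second statement, assume $G$ has no pendant vertices, i.e., $\delta(G) \geq 2$, so the only possible degrees are $2, 3, 4$. Then in the sum of Corollary \ref{prop:bipodddegree} the term for $d = 1$ disappears, and we are left with $4 + 2 + 1 = 7$, giving $\check s(G) \leq 7$.

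The argument is essentially a direct specialization of Corollary \ref{prop:bipodddegree}, so there is no substantial obstacle; the only thing to be careful about is the edge case in which some of these degrees do not actually occur in $G$ (for instance if $G$ is regular), but since the bound in Corollary \ref{prop:bipodddegree} sums only over degrees that are present, dropping absent degrees can only decrease the bound, so the stated inequalities remain valid. One should also note that the construction underlying Corollary \ref{prop:bipodddegree} (doubling $G$ and adding a matching between odd-degree vertices and their copies) does produce an even bipartite graph $G'$ with $\Delta(G') = 4$, so Theorem \ref{prop:bipevendeg} applies to $G'$ with the same value of $\Delta$, and the factor $(d+1) \leq 4$ accounting for the possible restrictions to $G$ of a palette at an odd-degree vertex of $G'$ is exactly what is used above.
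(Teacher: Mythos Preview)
Your proof is correct and is essentially the same argument as the paper's: the paper re-derives the bound by explicitly doubling $G$ to an even bipartite graph $G'$ with $\Delta(G')=4$ and counting palettes degree by degree (obtaining $1$, $2$, $4$, $4$ for degrees $4$, $2$, $3$, $1$ respectively), whereas you obtain the identical counts by simply plugging $\Delta(G)=4$ into Corollary~\ref{prop:bipodddegree}, which encapsulates exactly that construction.
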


\begin{proof}
	Starting from two copies of $G$, 
	we can create an Eulerian bipartite graph $G'$
	with maximum degree $4$
	containing $G$ as a subgraph.
	Let $\varphi$ be a proper $4$-edge coloring of $G'$ constructed as
	in the proof of Theorem \ref{prop:bipevendeg}, and
	let us consider the restriction
	of this edge coloring to $G$. Vertices 
	of degree $4$ all have the same palette, vertices
	of degree $2$ in $G$ have at most
	$2$ distinct possible palettes;
	vertices of degree $3$ in $G$ have at most $4$ distinct palettes, 
	and similarly for
	vertices of degree $1$.
\end{proof}

We note that the preceding corollary is sharp, which follows by considering
a disjoint union of $K_{1,4}$, $K_{2,4}$, and $K_{3,4}$:
the palette indices of these graphs are $5$, $3$ and $5$, respectively, 
as observed in
\cite{HornakHudak}; in fact, in any proper edge coloring
of this graph
the vertices of degree $1$ have four distinct palettes, vertices of degree
$2$ have at least two distinct palettes, vertices of degree three have
four different palettes, and vertices of degree four have at least one 
palette.
Hence, the palette index of the disjoint union
of these complete bipartite graphs is at least $11$.

From Corollary \ref{cor:bipdeg4} we deduce an upper bound on 
the palette index of bipartite graphs with maximum degree $5$.

\begin{corollary}
\label{prop:bipdeg5}
	If $G$ is a bipartite graph with $\Delta(G) =5$, then $\check s(G) \leq 23$.
	Moreover,
			if $G$ has a perfect matching, then $\check s(G) \leq 12$.
	\end{corollary}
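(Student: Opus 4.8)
The plan is to mimic the deduction of Corollary \ref{cor:bipdeg4} from Corollary \ref{cor:bipeuldeg4}, peeling off one color class to reduce a maximum degree $5$ problem to a maximum degree $4$ problem. Let $G$ be bipartite with $\Delta(G)=5$. First I would split the vertices of degree $5$ as follows: consider the subgraph $G[V_5]$ induced by the degree-$5$ vertices and, since $G$ is bipartite and hence Class $1$, take a proper $5$-edge coloring of $G$ and let $M$ be a largest color class among those $5$; a cleaner route is to simply note that $G$ is Class $1$ by K\"onig's theorem and pick a matching $M$ in $G$ saturating every vertex of degree $5$ (the union of all color classes incident to degree-$5$ vertices contains such a matching). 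Set $H = G - M$. Then $\Delta(H)\le 4$, every vertex of degree $5$ in $G$ has degree $4$ in $H$, and every other vertex either keeps its degree or drops by one. Color $H$ using Corollary \ref{cor:bipdeg4}: this uses a bounded number of palettes, at most $11$ in general, and at most $7$ if $H$ has no pendant vertices. Finally recolor the edges of $M$ with a single new color $0$; a vertex $v$ is incident with at most one edge of $M$, so its palette in $G$ is either $S_H(v,\cdot)$ (if $M$ misses $v$) or $S_H(v,\cdot)\cup\{0\}$ (if $M$ covers $v$). Hence the number of palettes at most doubles, giving $\check s(G)\le 2\cdot 11 + 1 = 23$, where the extra $+1$ accounts for the possibility that both versions of some palette occur — actually $\check s(G)\le 2\cdot 11$; I should double-check whether the bound $23$ requires a small additional refinement, e.g. that not every palette of $H$ is realized by vertices both covered and uncovered by $M$, or a slightly sharper count of which degree classes in $G$ can pick up color $0$.

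For the "perfect matching" refinement I would instead take $M$ to be a perfect matching of $G$ (which exists and, crucially, covers \emph{every} vertex), color $H = G - M$, and recolor $M$ with color $0$. Now \emph{every} vertex of $G$ has color $0$ in its palette, so the palette of $v$ in $G$ is exactly $S_H(v,\cdot)\cup\{0\}$, and the number of distinct palettes of $G$ is exactly the number of distinct palettes of $H$. Since $M$ is perfect, $H$ is an even-degree... no: $H = G - M$ has $\Delta(H)\le 4$ and, more importantly, $H$ has \emph{no pendant vertices} precisely when $G$ has minimum degree at least $2$, which a graph with a perfect matching need not satisfy. So here the key observation must be sharper: a vertex of degree $1$ in $G$ becomes an \emph{isolated} vertex in $H$ (contributing no palette), so pendant vertices of $G$ cause no palettes in $H$ at all; a vertex of degree $d\ge 2$ in $G$ has degree $d-1\ge 1$ in $H$. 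Thus the only degrees appearing in $H$ are $1,2,3,4$, and I would re-run the Corollary \ref{cor:bipdeg4}-style count on $H$ — vertices of $H$-degree $4$ give one palette, degree $3$ at most $4$, degree $2$ at most $2$, degree $1$ at most... — keeping track that these $H$-degrees come from $G$-degrees $5,4,3,2$ respectively. Embedding $H$ in an Eulerian bipartite supergraph $H'$ with $\Delta(H')=4$ and applying Theorem \ref{prop:bipevendeg}, one gets: at most $1$ palette at $G$-degree-$5$ vertices, at most $4$ at degree-$4$, at most $2$ at degree-$3$, at most $4$ at degree-$2$, which sums to $11$ palettes for $H$ and hence $\check s(G)\le 12$ after adjoining color $0$. (If $G$ has no vertices of some degree the bound only improves.)

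The main obstacle I expect is bookkeeping rather than any deep difficulty: in the general case I must be careful about how many vertices of each degree in $H$ can genuinely occur with \emph{both} an incident $M$-edge and without one, since that is what controls whether the factor is truly $2$ or can be shaved; and I must make sure the degree-shift induced by deleting $M$ is tracked correctly so that the per-degree palette counts for $H$ are applied to the right $G$-degrees. A secondary subtlety is the choice of $M$: for the general bound it is enough that $M$ saturates $V_5$, while for the refinement I genuinely need a perfect matching, and I should state the existence hypothesis explicitly rather than deriving it. Once these are pinned down, the arithmetic ($2\cdot 11 + 1 = 23$ and $11 + 1 = 12$) is immediate.
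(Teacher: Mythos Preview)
Your approach is the same as the paper's: remove a matching $M$ covering all degree-$5$ vertices (the paper takes a minimal such $M$, existence coming from K\"onig's theorem), apply Corollary~\ref{cor:bipdeg4} to $G-M$, then give the edges of $M$ a fresh color~$5$. For the perfect-matching refinement you again match the paper, which simply applies Corollary~\ref{cor:bipdeg4} to $G-M'$.

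Your only loose end is the ``$+1$'' in $2\cdot 11 + 1 = 23$, and you already have the idea needed to resolve it---you just applied it only in the second part. A degree-$1$ vertex of $G$ can be covered by $M$ (even when $M$ is minimal: think of a degree-$5$ vertex with a pendant neighbor that must be used), and such a vertex becomes isolated in $G-M$, hence carries no palette there but acquires the singleton palette $\{5\}$ in $G$. So the correct accounting is: each of the at most $11$ palettes of $G-M$ splits into at most two palettes of $G$ (with or without colour $5$), plus possibly the new palette $\{5\}$; total $2\cdot 11 + 1 = 23$. Your suggestion that perhaps $2\cdot 11$ already suffices is not supported by this argument. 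This is exactly the paper's reasoning (``for any palette in $G-M$, we obtain at most $2$ different palettes in $G$, and additionally, the palette $\{5\}$''), and with it your proof is complete.
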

\begin{proof}
Let $M$ be minimal matching in $G$ covering all vertices of degree $5$; such
a matching exists e.g. by K\"onig's edge coloring theorem. 
By Corollary \ref{cor:bipdeg4}, $G-M$ has a proper edge coloring with
$4$ colors and
at most $11$ distinct palettes; by assigning a new color $5$
to all edges of $M$, we obtain a proper edge coloring of $G$ with
at most $23$ distinct palettes, because for any palette in $G-M$, we obtain
at most $2$ different palettes in $G$, and additionally, the palette $\{5\}$.

The second part follows by applying Corollary \ref{cor:bipdeg4}
to the graph $G-M'$, where $M'$ is
a perfect matching in $G$.
\end{proof}

For Eulerian bipartite graphs with maximum degree six we have the following
immediate consequence of Theorem \ref{prop:bipevendeg}.

\begin{corollary}
	If $G$ is an Eulerian bipartite graph with $\Delta(G) =6$, then
	$\check s(G) \leq 7$.
\end{corollary}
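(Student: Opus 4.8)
The plan is to apply Theorem~\ref{prop:bipevendeg} directly, since an Eulerian bipartite graph with $\Delta(G)=6$ is an even bipartite graph whose degrees all lie in $\{2,4,6\}$. First I would invoke Theorem~\ref{prop:bipevendeg}, which gives
$$\check s(G) \leq \sum_{d \in D(G)} \binom{\Delta(G)/2}{d/2} = \sum_{d \in D(G)} \binom{3}{d/2}.$$
Then I would observe that $D(G) \subseteq \{2,4,6\}$, so the sum is over at most three terms: for $d=2$ we get $\binom{3}{1}=3$, for $d=4$ we get $\binom{3}{2}=3$, and for $d=6$ we get $\binom{3}{3}=1$. Adding these gives $3+3+1=7$, which is the claimed bound.

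There is essentially no obstacle here: the corollary is a pure specialization of the already-proved Theorem~\ref{prop:bipevendeg} to the case $\Delta=6$, using only that an Eulerian bipartite graph is even (every vertex in an Eulerian graph has even degree) and that the only even degrees not exceeding $6$ are $2$, $4$, and $6$. The one point worth a sentence is that the bound from Theorem~\ref{prop:bipevendeg} is a worst-case sum over \emph{all} degrees actually present in $G$; since $D(G)$ can only be a subset of $\{2,4,6\}$, the sum is maximized when all three degrees occur, yielding $7$. If $G$ has fewer distinct degrees the bound is even smaller, but $7$ is a valid upper bound in all cases.

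So the proof is a two-line deduction. I would write: ``By Theorem~\ref{prop:bipevendeg}, $\check s(G) \leq \binom{3}{1} + \binom{3}{2} + \binom{3}{3} = 3+3+1 = 7$, since every Eulerian graph has all vertex degrees even and thus $D(G) \subseteq \{2,4,6\}$.'' The hard part, if any, is merely making sure the edge case where some of the degrees $2,4,6$ are absent is handled — but since omitting a term only decreases the sum, this is immediate and the bound $7$ holds uniformly.

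\begin{proof}
	Since $G$ is Eulerian, every vertex of $G$ has even degree, and since
	$\Delta(G)=6$ we have $D(G) \subseteq \{2,4,6\}$. Hence Theorem
	\ref{prop:bipevendeg} yields
	$$\check s(G) \leq \sum_{d \in D(G)} \binom{3}{d/2}
	\leq \binom{3}{1} + \binom{3}{2} + \binom{3}{3} = 3+3+1 = 7. \qedhere$$
\end{proof}
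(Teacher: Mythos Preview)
Your proof is correct and matches the paper's approach exactly: the corollary is stated there as an immediate consequence of Theorem~\ref{prop:bipevendeg}, and your computation $\binom{3}{1}+\binom{3}{2}+\binom{3}{3}=7$ with $D(G)\subseteq\{2,4,6\}$ is precisely the intended deduction.
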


Consider a graph that is the disjoint
union of $K_{2,6}$ and $K_{4,6}$. \cite{HornakHudak} proved that
$\check s(K_{2,6})=4$, and
$\check s(K_{4,6})=4$ ,
which, as above, implies that the upper
bound in the preceding corollary is sharp.

Note further that the preceding corollary shows that
Conjecture \ref{conj:palettebiregular} holds for $(4,6)$-biregular graphs.

For Eulerian bipartite
graphs $G$ with maximum degree $8$, Theorem \ref{prop:bipevendeg}
implies that $\check s(G) \leq 15$.
Using a result from
\cite{AsratianCasselgrenPetrosyanJGT}
we deduce that in fact a better upper bound holds:

\begin{proposition}
\label{prop:bipdeg8}
	If $G$ is an Eulerian bipartite graph with maximum degree $8$,
	then $\check s(G) \leq 13$.
\end{proposition}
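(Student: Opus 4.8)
The plan is to mimic the bound $\check s(G)\le 15$ coming directly from Theorem~\ref{prop:bipevendeg}, but to economize by using a cyclic interval coloring with few colors instead of the raw Petersen decomposition into four $2$-factors. First I would recall the result from \cite{AsratianCasselgrenPetrosyanJGT} that every Eulerian bipartite graph with maximum degree $8$ admits a cyclic interval $9$-coloring (this is the ingredient referred to in the paragraph preceding the statement; I will assume it as cited). Fix such a coloring $\varphi$ with colors $1,\dots,9$, viewed cyclically. At every vertex $v$ the set $S(v,\varphi)$ is a cyclic interval of even length $d_G(v)\in\{2,4,6,8\}$ in $\mathbb Z_9$. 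A cyclic interval of length $\ell$ in $\mathbb Z_9$ is determined by its starting point, so there are exactly $9$ possible palettes for each of the four degree classes, giving the crude total $36$ — which is worse than $15$, so this alone is not enough and the coloring must be postprocessed.

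The key step is therefore to reduce the number of distinct cyclic-interval palettes per degree class below what Theorem~\ref{prop:bipevendeg} gives for a non-cyclic $4$-factorization, and the natural device is a ``rotation'' or color-shift argument restricted to the components of one $2$-factor. Concretely: from $\varphi$ extract the even subgraphs realized by color pairs and observe that the union of edges colored $\{9\}$ behaves like a matching/cycle layer that can be recolored. Alternatively — and this is the cleaner route — I would argue that a cyclic interval $9$-coloring of an Eulerian bipartite graph can be converted, by removing a suitable $2$-factor $F$ (which exists by Petersen's Theorem applied after the loop-padding trick of Theorem~\ref{prop:bipevendeg}) and recoloring $F$ with two fresh colors, into a coloring where the remaining graph $G-F$ has maximum degree $6$ and is still Eulerian bipartite. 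Apply the $\check s\le 7$ bound for maximum degree $6$ to $G-F$ (the corollary just above), then splice the two extra colors on $F$ back in: each of the $\le 7$ palettes of $G-F$ spawns at most $2$ palettes of $G$ (according to whether the vertex meets $F$ in $0$ or $2$ edges), plus possibly the palette of a vertex lying only in $F$. A careful degree-by-degree count — degree $8$ vertices meet $F$ in exactly $2$ edges so contribute $\le \lvert D^{\text{even}}(G-F)\rvert$-many, etc. — is what must be pushed down to $13$ rather than $14$, by noticing that the ``both extra colors'' palette and the ``$F$ only'' palette coincide in exactly one case, or that some degree class is forced to meet $F$ nontrivially and hence loses one option.

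The main obstacle is precisely that last bookkeeping: the easy combination of "$\le 7$ on $G-F$" with "$\times 2$ for the reinstated $2$-factor" yields $14+1$ or so, and shaving it to $13$ requires identifying a genuine coincidence among palettes rather than a generic product bound. I expect this to hinge on the fact that a vertex of degree $8$ in $G$ has degree $6$ in $G-F$ and hence realizes the \emph{unique} full-length-$6$ cyclic palette there, so across the whole degree-$8$ class there is only one palette in $G-F$ and at most two in $G$; and similarly the degree-$2$ and degree-$4$ contributions must be bounded by reusing the explicit small numbers $\binom31,\binom32$ from Theorem~\ref{prop:bipevendeg} applied to $G-F$ rather than the cyclic count $9$. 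Summing $2$ (degree $8$) $+$ at most $4$ (degree $6$ in $G$, i.e.\ degree $4$ in $G-F$, times $2$, minus a coincidence) $+$ at most $6$ (degree $4$ and degree $2$ in $G$) and checking the total does not exceed $13$ is the calculation I would then grind through; if it comes out to $14$, the fix is to choose $F$ to pass through a prescribed vertex so as to force one more palette collision.
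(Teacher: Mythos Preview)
Your proposal has a genuine gap. The ``remove one even subgraph $F$, apply the $\Delta=6$ bound to $G-F$, then reinstate $F$ with two fresh colors'' manoeuvre does not improve on Theorem~\ref{prop:bipevendeg}: it simply rebuilds the very coloring that theorem produces. Concretely, if $F=F'_4$ is one of the four even subgraphs in a Petersen decomposition of (the loop-padded) $G$, then $G-F$ is coloured via the remaining three subgraphs with colors $1,\dots,6$, and reinstating $F$ with colors $7,8$ gives back exactly the $15$-palette coloring. Your final tally is also miscounted: a vertex of degree $4$ in $G$ may have degree $2$ or $4$ in $G-F$, contributing $3+3=6$ palettes, and a vertex of degree $2$ in $G$ may have degree $0$ or $2$ in $G-F$, contributing $1+3=4$ more, so the honest total is $1+4+6+4=15$, not $12$. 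The vague devices you suggest (forcing $F$ through a chosen vertex, hunting for a single palette coincidence) can shave off at most one palette each and do not close the gap to $13$.

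What the paper invokes from \cite{AsratianCasselgrenPetrosyanJGT} is a \emph{structured} decomposition, not a generic one: the proof of the cited theorem yields even subgraphs $F'_1,F'_2,F'_3,F'_4$ with the extra property that for every vertex $v$ the index set $\{\,i : d_{F'_i}(v)=2\,\}$ is a \emph{cyclic interval} in $\mathbb{Z}_4$. Colouring $F'_i$ with $\{2i-1,2i\}$ then gives, for a vertex of degree $2j$, at most as many palettes as there are cyclic intervals of length $j$ in $\mathbb{Z}_4$, namely $4$ for each of $j=1,2,3$ and $1$ for $j=4$; hence $4+4+4+1=13$. The whole saving over $15$ comes from replacing the middle term $\binom{4}{2}=6$ by the $4$ cyclic length-$2$ intervals, and that replacement is exactly the nontrivial input your argument is missing. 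Neither a cyclic interval $9$-coloring nor a generic $2$-factor removal supplies it.
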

The proof is omitted since it immediately follows from the proof of
Theorem 3 in \cite{AsratianCasselgrenPetrosyanJGT}.

\bigskip

Our final result in this section concerns a particular family
of bipartite graphs.
The grids $G(m,n)$ are Cartesian products of paths on $m$ and $n$ vertices, respectively. Here, we determine the exact value of the palette index of $G(m,n)$.

\begin{theorem}
\label{mytheorem14} For any $m,n\geq 2$, 
\begin{center}
$\check s(G(m,n))=\left\{
\begin{tabular}{ll}
$1$, & if $m=n=2$,\\
$2$, & if $\min\{m,n\}=2$ and $\max\{m,n\}\geq 3$,\\
$3$, & if $m,n\geq 3$ and $m n$ is even,\\
$5$, & if $m,n\geq 3$ and $m n$ is odd. \\
\end{tabular}%
\right.$
\end{center}
\end{theorem}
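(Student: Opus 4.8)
The plan is to treat the four cases separately, establishing matching upper and lower bounds in each. For the upper bounds, I would exhibit explicit proper edge colorings of the grid; for the lower bounds, I would argue that a coloring with fewer palettes forces a contradiction, typically by a parity or counting argument on the corner, boundary, and interior vertices. Recall that $G(m,n)$ is bipartite with vertex degrees in $\{2,3,4\}$: corners have degree $2$, non-corner boundary vertices have degree $3$, and interior vertices have degree $4$. For $m=n=2$ the grid is just $C_4$, which is $2$-regular and Class $1$, so $\check s = 1$. For $\min\{m,n\}=2$, $\max\{m,n\}\ge 3$, the graph is the ``ladder'' $P_2 \times P_n$; it has maximum degree $3$ and is Class $1$ (bipartite), hence $\check s \ge 2$ since it is not regular, and one checks directly that a $3$-edge-coloring can be arranged so that only two palettes occur (all degree-$2$ vertices get one palette, all degree-$3$ vertices get the full palette $\{1,2,3\}$) — this is where I would spend a few lines giving the explicit periodic coloring along the ladder.

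For the case $m,n \ge 3$ with $mn$ even, I would first show $\check s(G(m,n)) \ge 3$: the grid is not regular, so $\check s \ge 2$, and $\check s = 2$ is impossible because a grid has vertices of three distinct degrees $2,3,4$, and in any proper edge coloring vertices of different degrees necessarily have different palettes (a palette's size equals the degree), so at least three palettes occur whenever all three degrees are present — which happens precisely when $m,n\ge 3$. For the upper bound $\check s \le 3$, the idea is to produce a proper $4$-edge-coloring in which all degree-$2$ vertices share one palette, all degree-$3$ vertices share a second, and all degree-$4$ vertices share a third. When $mn$ is even, the grid is Eulerian? No — it is not Eulerian in general, but it does have a convenient structure: $G(m,n) = P_m \times P_n$ decomposes naturally into ``horizontal'' and ``vertical'' edge sets. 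I would color the horizontal edges of row $i$ alternately with two colors depending on the parity of $i$, and likewise the vertical edges of column $j$, choosing the four colors $\{1,2,3,4\}$ so that horizontal edges use $\{1,2\}$ and vertical edges use $\{3,4\}$ — then a degree-$4$ interior vertex sees exactly $\{1,2,3,4\}$, a boundary vertex on a horizontal edge of the grid sees $\{1,2\}\cup\{c\}$ for a single vertical color $c$, and I must arrange the alternation so that this is always the \emph{same} color $c$, and similarly for the other boundary and the corners. Achieving a single palette per degree class is the delicate part, and when $mn$ is even there is enough freedom in the parities to do so; I expect to use the evenness of (say) $n$ to make the horizontal alternation consistent across all rows.

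The case $m,n\ge 3$ with $mn$ odd (so both $m$ and $n$ are odd) is the main obstacle. Here the parity obstruction that made the previous construction work now fails: on an odd path the two-coloring of a row's horizontal edges cannot be globally consistent, so the naive scheme produces extra palettes, and one must show that genuinely $5$ palettes are needed and suffice. For the upper bound $\check s\le 5$ I would give a more careful coloring — perhaps using $5$ colors — partitioning the boundary into the four sides and the four corners and checking that the palettes collapse to exactly five classes; for the lower bound $\check s\ge 5$ I would argue that $3$ and $4$ palettes are both impossible. Ruling out $3$: with only three palettes, each degree class must be monochromatic in its palette, and I would derive a contradiction by following the forced colors around an odd cycle bounding a face, or around the outer boundary, showing that consistency fails because of the odd side-lengths. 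Ruling out $4$: here I would show that if only four palettes occur, then one degree class uses exactly two palettes and the other two are monochromatic, and again trace a parity contradiction — this is the step I expect to require the most care, likely a case analysis on which degree class splits into two palettes, combined with a discharging-style or direct local argument exploiting that both $m$ and $n$ are odd. I would close by noting that small base cases ($m=n=3$, i.e. $G(3,3)$) can be verified directly to anchor the induction or the parity argument.
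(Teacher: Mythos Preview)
Your outline has two genuine gaps. First, the upper bound construction for the case $m,n\ge 3$ with $mn$ even will not work as stated: if horizontal edges use only colors $\{1,2\}$ and vertical edges use only colors $\{3,4\}$, then a degree-$3$ vertex on the top or bottom boundary has palette of the form $\{1,2,c\}$ with $c\in\{3,4\}$, while a degree-$3$ vertex on the left or right boundary has palette of the form $\{c',3,4\}$ with $c'\in\{1,2\}$. These two palette shapes are never equal, so you cannot collapse all degree-$3$ vertices to a single palette this way, no matter how you synchronize the alternations. The paper's construction avoids this by using \emph{nested} palettes $\{1,2\}\subset\{1,2,3\}\subset\{1,2,3,4\}$: some vertical edges near the boundary are recolored with colors $1$ or $3$ (not just $3,4$), so that every degree-$3$ vertex ends up with exactly $\{1,2,3\}$.

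Second, your proposed lower bound argument for $mn$ odd cannot be based on ``following the forced colors around an odd cycle'': $G(m,n)$ is bipartite and has no odd cycles, and the outer boundary cycle has even length $2(m-1)+2(n-1)$. The parity the paper actually exploits is that the number of interior (degree-$4$) vertices, namely $(m-2)(n-2)$, is odd. If there were only three palettes, all degree-$4$ vertices would share a palette $\{a,b,c,d\}$; for each color $x\in\{a,b,c,d\}$ the matching $M_x$ cannot cover only the degree-$4$ vertices (an odd set), so every color $x$ appears on some edge joining a degree-$4$ vertex to a degree-$3$ vertex. Hence all four colors show up at degree-$3$ vertices, contradicting a single $3$-element palette there. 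The same odd-count idea drives the (much longer) exclusion of four palettes: one first shows that degree-$3$ vertices must carry at least two palettes, forcing exactly one palette of size $4$ and one of size $2$, and then a case analysis on how the two $3$-element palettes overlap leads to parity contradictions of the same flavor. Your plan to ``trace a parity contradiction'' is the right instinct, but the relevant parity is the odd count of interior vertices, not the length of any cycle.
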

\begin{proof} Let $V(G(m,n)) = \left\{v_j^{(i)}: 1\leq i\leq m,1\leq j \leq n\right\}$ and
$$E(G(m,n))=\left\{v_{j}^{(i)}v_{j+1}^{(i)}: 1\leq i\leq m,1\leq j\leq n-1\right\} \cup 
\left\{v_{j}^{(i)}v_{j}^{(i+1)}: 1\leq i\leq m-1,1\leq j\leq n\right\}.$$

First we show that if $m n$ is even, then
\begin{center}
$\check s(G(m,n))=\left\{
\begin{tabular}{ll}
$1$, & if $m=n=2$,\\
$2$, & if $\min\{m,n\}=2$ and $\max\{m,n\}\geq 3$,\\
$3$, & if $m,n\geq 3$ and $m n$ is even.\\
\end{tabular}%
\right.$
\end{center}

Trivially, $\check s(G(2,2))=\check s(C_{4})=1$. So, without loss of generality we may assume that $\max\{m,n\}\geq 3$ and $m$ is even. Define an edge coloring $\alpha$ of $G(m,n)$ as follows:

\begin{description}
\item[(1)] for $i=1,\ldots,m$, $j=1,\ldots,n-1$, let
\begin{center}
$\alpha\left(v_{j}^{(i)}v_{j+1}^{(i)}\right)=\left\{
\begin{tabular}{ll}
$2$, & if $j$ is odd,\\
$1$, & if $j$  is even;\\
\end{tabular}%
\right.$
\end{center}
\item[(2)] for $i=1,\ldots,\frac{m}{2}$, $j=1,\ldots,n-1$, let
\begin{center}
$\alpha\left(v_{j}^{(2i-1)}v_{j}^{(2i)}\right)=\left\{
\begin{tabular}{ll}
$1$, & if $j=1$,\\
$3$, & otherwise;\\
\end{tabular}%
\right.$
\end{center}
\item[(3)] for $i=1,\ldots,\frac{m}{2}-1$, $j=1,\ldots,n$, let
\begin{center}
$\alpha\left(v_{j}^{(2i)}v_{j}^{(2i+1)}\right)=\left\{
\begin{tabular}{ll}
$3$, & if $j=1$ or $j=n$,\\
$4$, & otherwise;\\
\end{tabular}%
\right.$
\end{center}
\item[(4)] for $i=1,\ldots,\frac{m}{2}$, let
\begin{center}
$\alpha\left(v_{n}^{(2i-1)}v_{n}^{(2i)}\right)=\left\{
\begin{tabular}{ll}
$2$, & if $n$ is odd,\\
$1$, & if $n$  is even,\\
\end{tabular}%
\right.$\\
\end{center}
\end{description}

It is easy to see that $\alpha$ is proper edge coloring of $G(m,n)$ with colors $1,2,3,4$, such that for each vertex $v\in V(G(m,n))$, $S(v,\alpha)\in \{\{1,2\},\{1,2,3\},\{1,2,3,4\}\}$. This shows that if $\max\{m,n\}\geq 3$ and 
$m n$ is even, then
\begin{center}
$\check s(G(m,n))=\left\{
\begin{tabular}{ll}
$2$, & if $\min\{m,n\}=2$ and $\max\{m,n\}\geq 3$,\\
$3$, & if $m,n\geq 3$ and $m n$ is even.\\
\end{tabular}%
\right.$
\end{center} 

\bigskip

Next we consider the case $m,n\geq 3$ and $m n$ is odd.
We first prove the upper bound, i.e. that $\check s(G(m,n))\leq 5$.
Without loss of generality we may assume that $m\leq n$. Let us first show that $\check s(G(3,n))\leq 5$.

 Define an edge coloring $\beta$ of $G(3,n)$ as follows:

\begin{description}
\item[1)] for $i=1,2,3$,  $j=1,\ldots,n-1$, let
\begin{center}
$\beta\left(v_{j}^{(i)}v_{j+1}^{(i)}\right)=\left\{
\begin{tabular}{ll}
$2$, & if $i=1$ and $j$ is odd,\\
$1$, & if $i=1$ and $j$  is even,\\
$2$, & if $i=2$ and $j$ is odd,\\
$4$, & if $i=2$ and $j$  is even,\\
$4$, & if $i=3$ and $j$ is odd,\\
$2$, & if $i=3$ and $j$  is even;\\
\end{tabular}%
\right.$
\end{center}
\item[2)] $j=2,\ldots,n-1$, let
\begin{center}
$\beta\left(v_{j}^{(1)}v_{j}^{(2)}\right)=3$ and $\beta\left(v_{j}^{(2)}v_{j}^{(3)}\right)=1$;
\end{center}
\item[3)] 
$\beta\left(v_{1}^{(1)}v_{1}^{(2)}\right)=\beta\left(v_{n}^{(2)}v_{n}^{(3)}\right)=1$, $\beta\left(v_{n}^{(1)}v_{n}^{(2)}\right)=2$ and $\beta\left(v_{1}^{(2)}v_{1}^{(3)}\right)=3$.
\end{description}
It is not difficult to see that $\beta$ is proper edge coloring of $G(3,n)$ with colors $1,2,3,4$ such that for each vertex $v\in V(G(3,n))$, $S(v,\beta)\in \{\{1,2\},\{3,4\},\{1,2,3\},\{1,2,4\},\{1,2,3,4\}\}$.

If $m\geq 5$, then we 
define a proper edge coloring of $G(m,n)$ in the following way:
let $H = G(m,n) - \left\{v_i^{(m-3)}v_i^{(m-2)} : 1\leq i \leq n\right\}$.
The graph $H$ consists of two components
$H_1$ and $H_2$, where $H_1$ is
isomorphic to $G(m-3,n)$, and $H_2$ is isomorphic to $G(3,n)$.
Let $\alpha'$ be a proper edge coloring of $H_1$ corresponding to the
coloring $\alpha$ of $G(m-3,n)$ defined above, and let $\beta'$ be a proper
edge coloring of $H_2$ corresponding to the edge coloring $\beta$
of $G(3,n)$ defined above. Suppose further that these edge colorings are chosen
in such a way that
vertices $v_{1}^{(m-3)},v_{2}^{(m-3)},\ldots,v_{n}^{(m-3)}$ of $H_1$ 
have the same palettes as vertices $v_{1}^{(m-2)},v_{2}^{(m-2)},\ldots,v_{n}^{(m-2)}$ of $H_2$. 
Thus, by coloring all edges of $G(m,n)$ with one endpoint in $H_1$
and one endpoint in $H_2$ with color $4$, we obtain a proper edge coloring of $G(m,n)$
with $5$ palettes; thus $\check s(G(m,n))\leq 5$.

\bigskip

We now turn to the lower bound. Since  $m,n\geq 3$ and $m n$ is odd,
the graph $G(m,n)$ contains vertices of degree $2,3$ and $4$;
hence $\check s(G(m,n))\geq 3$.

Next, we prove that
 $\check s(G(m,n))\geq 4$. Let $\gamma$ be a proper edge coloring of $G(m,n)$ with three distinct palettes. This implies that for each vertex $v\in V(G(m,n))$ with
degree four, we have $S(v,\gamma)=\{a,b,c,d\}$. 
Let $M_a,M_b,M_c$ and $M_d$ be the color classes of $\gamma$ corresponding 
to the colors $a,b,c$ and $d$. 
Now, there are precisely $(m-2)(n-2)$ vertices of degree four in $G(m,n)$,
and since $(m-2)(n-2)$ is an odd number, the edges with colors 
$a,b,c$ and $d$ cannot only be incident with vertices of degree four. This implies that for each color $x\in \{a,b,c,d\}$, there exists an edge $e_x$ with color $x$ joining vertices with degrees $4$ and $3$. Thus, all colors $a,b,c$ and $d$ appear in palettes of vertices of degree $3$, which implies that $\check s(G(m,n))\geq 4$.

Finally, we show that if $m n$ is odd, then $\check s(G(m,n)) = 5$.
Suppose, to the contrary, that $\check s(G(m,n))=4$, and
let $\phi$ be a proper edge coloring of $G(m,n)$ with four distinct palettes. 
Throughout the rest of the proof, denote by $M_i$ the color class $i$
under $\phi$, i.e., the set of edges with color $i$ under $\phi$. 

Let us first prove that the number of $3$-element palettes under $\phi$
is at least two. 
Since there are at most two palettes of size $4$, the set $A$
of colors appearing in palettes of size $4$ satisfies $4\leq |A| \leq 8$.
Moreover, $A$ clearly has a partition $\{A_1, A_2, A_3, A_4\}$ such that
$1\leq |A_i| \leq 2$, and each palette of size $4$ contains exactly
one color from $A_i$, $i=1,2,3,4$. Furthermore, 
since $mn$ is odd, there is an odd 
number of vertices of degree $4$ in $G(m,n)$. Therefore,
for every $i \in \{1,2,3,4\}$, there is a color $a_i \in A_i$
and an edge colored $a_i$ that joins vertices of degree $3$ and $4$.
We thus conclude that each of the colors $a_1, a_2, a_3, a_4$ appears
in a palette of size $3$, and it follows that
 the number of palettes of size $3$
is at least two.

Now, since there are at least two palettes of size $3$, there must be
exactly one palette of size $4$ and one palette of size $2$.
Without loss of generality we assume that for each vertex $v\in V(G(m,n))$ with
degree four, we have $S(v,\phi)=\{1,2,3,4\}$, and for each color 
$x\in \{1,2,3,4\}$, there exists an edge $e_x$ with color $x$ joining vertices with degrees $4$ and $3$. 
Thus, all colors $1,2,3$ and $4$ appear in palettes
of vertices of degree three.

Since two distinct palettes occur at vertices of degree three, at most six
colors $1,\dots,6$ are used in the coloring $\phi$.
Suppose first that disjoint palettes occurs at vertices of degree three.
If three colors from $\{1,2,3,4\}$ appear in one such palette,
i.e., if for each vertex $v\in V(G(m,n))$ with degree three, either, say,
$S(v,\phi)=\{1,2,3\}$ or $S(v,\phi)=\{4,5,6\}$, then 
since both $m$ and $n$ are odd, vertices of degree two only have one
possible palette under $\phi$, and neither of colors $5$ and $6$
appear at vertices of degree $4$, this implies that
all vertices with degree three have the same palette, which is a contradiction.
If instead two colors from $\{1,2,3,4\}$ appear in both palettes, e.g.
if for each vertex $v\in V(G(m,n))$ with degree three, 
either $S(v,\phi)=\{1,2,5\}$ or $S(v,\phi)=\{3,4,6\}$, then, again, 
this implies that all vertices with degree three have the same palette, which is a contradiction.

Suppose now instead that
the two distinct palettes at vertices of degree three contain
exactly one common color.
We first consider the case when this common color is in $\{1,2,3,4\}$.
Assume, without loss of generality, that
this color is $3$, and
consider the color class $M_3$.
The edges in $M_3$
either cover all vertices of the graph or all vertices except those with degree two; but this is impossible, since $mn$ and $mn-4$ are both odd numbers.

Suppose now instead that the common color of the different palettes
of vertices of degree three is not in $\{1,2,3,4\}$.
We assume that this common color is $5$, and since all colors
in $\{1,2,3,4\}$ appear on edges incident with vertices of degree $3$,
we may assume, that for each vertex $v\in V(G(m,n))$ with degree three, 
either $S(v,\phi)=\{1,2,5\}$ or $S(v,\phi)=\{3,4,5\}$. 
This means that the color class $M_5$ covers all vertices of the cycle 
$C$ of $G(m,n)$ containing all vertices of degree $3$ and $2$ in $G(m,n)$,
because any path in $G(m,n)$ 
between vertices of degree $2$, whose intermediate vertices
all have degree $3$, has even length.
Now, since all vertices with degree two 
have the same palette, we may assume that for each vertex $v\in V(G(m,n))$ with degree two, $S(v,\phi)=\{a,b\}$.
Since color $5$ appears at each vertex of $C$, we obtain that $a=5$. Without loss of generality we may assume that $b=1$. Let us now consider the color class $M_3$. Clearly, $$|M_3|=\frac{1}{2}\left((m-2)(n-2)+l\right),$$ 
where $l$ is the number of vertices of $C$ with the palette $\{3,4,5\}$. 
Since $(m-2)(n-2)$ is odd, we get that $l$ is odd too. Let $r_3$ and $r_4$ be the number of edges of $C$ with colors $3$ and $4$, respectively. Now we can count the number of vertices of $C$ with the palette $\{3,4,5\}$ using $r_3$ and $r_4$. 
Since, all vertices of degree two have the palette $\{1,5\}$, and color
$5$ does not appear on any edge incident with a vertex of degree four,
$l=2r_3+2r_4$; but this contradicts the fact that $l$ is odd. 

Finally, let us consider the case when the two distinct palettes at vertices
of degree three contain two common colors. Suppose without loss of generality
that
for each vertex $v\in V(G(m,n))$ with degree three, 
either $S(v,\phi)=\{1,2,3\}$ or $S(v,\phi)=\{2,3,4\}$. 
Let us consider vertices with degree two in 
$G(m,n)$;
all such vertices $v$ have the same palette  
$S(v,\phi)=\{a,b\}$.
If $\{a,b\}\cap \{2,3\}\neq \emptyset$, then the color class $M_a$ (or $M_b$) is a perfect matching of $G(m,n)$, which is a contradiction. So, we may assume that $\{a,b\}=\{1,4\}$. 
Let us consider the color class $M_2$. 
Clearly, $$|M_2|=\frac{1}{2}\left((m-2)(n-2)+k+l\right),$$ where $k$ is the number of vertices of $C$ with the palette $\{1,2,3\}$, 
and $l$ is the number of vertices of $C$ with the palette $\{2,3,4\}$. 
Since $(m-2)(n-2)$ is odd, we get that $k+l$ is odd too. On the other hand, 
it is easy to see that $k+l=2(m-2+n-2)$, which is a contradiction.
\end{proof}\


\section{Biregular graphs}

In this section we consider $(a,b)$-biregular graphs. Our primary
aim here is to show that Conjecture \ref{conj:palettebiregular}
holds for several families of biregular graphs.

K\"onig's edge coloring theorem implies that
$\check s(G) \leq 1+ \binom{b}{a}$ for every $(a,b)$-biregular graph $G$
where $a \leq b$.
In particular, this implies that if $G$ is $(b-1,b)$-biregular or
$(1,b)$-biregular, then
$\check s(G) \leq 1+b$, which means that Conjecture
\ref{conj:palettebiregular} holds for all such graphs. In fact,
the latter family of graphs show that the upper bound in
Conjecture \ref{conj:palettebiregular} is in general sharp.

The next lemma will be used frequently.

\begin{lemma}
\label{lemma:lowbound}
    If $G$ is an $(a,b)$-biregular graph with $a<b$, then 
$\check s(G) \geq 1 + \lceil \frac{b}{a} \rceil$.
\end{lemma}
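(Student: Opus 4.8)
The plan is to bound below the number of distinct palettes by a counting argument on the color classes (matchings) appearing at a vertex of degree $b$. The key observation is that in any proper edge coloring of $G$, a vertex $v$ of degree $b$ sees exactly $b$ distinct colors, so its palette has size $b$; a vertex $u$ of degree $a$ has a palette of size $a$. Since $a < b$, the palette of a degree-$b$ vertex can never equal the palette of a degree-$a$ vertex, so degree-$a$ and degree-$b$ vertices contribute disjoint sets of palettes. This immediately gives $\check s(G) \geq 2$, but we need $1 + \lceil b/a \rceil$, so the real work is showing that the degree-$a$ vertices alone force at least $\lceil b/a \rceil$ distinct palettes.

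**First I would** fix an arbitrary proper edge coloring $\varphi$ of $G$ and pick a vertex $v$ of degree $b$; let $c_1, \dots, c_b$ be the colors on the edges at $v$, and for each $i$ let $e_i$ be the edge at $v$ colored $c_i$ and let $u_i$ be the other endpoint of $e_i$, a vertex of degree $a$. The palette $S(u_i, \varphi)$ contains $c_i$ and has size exactly $a$. Now I would argue by a pigeonhole / covering argument: suppose the vertices $u_1, \dots, u_b$ together exhibit only $t$ distinct palettes $P_1, \dots, P_t$ (each of size $a$). Every color $c_i$ lies in the palette of $u_i$, hence in one of $P_1, \dots, P_t$; so $\{c_1, \dots, c_b\} \subseteq P_1 \cup \dots \cup P_t$, which has at most $ta$ elements. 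Since the $c_i$ are distinct, $b \leq ta$, giving $t \geq b/a$, hence $t \geq \lceil b/a \rceil$ as $t$ is an integer. Thus at least $\lceil b/a \rceil$ distinct palettes occur among degree-$a$ vertices.

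**Then** I combine this with the earlier remark: the single vertex $v$ (and all degree-$b$ vertices) has a palette of size $b > a$, which is distinct from each of the $\geq \lceil b/a \rceil$ size-$a$ palettes found above. Therefore $\check s(G) \geq \lceil b/a \rceil + 1$. Since the coloring $\varphi$ was arbitrary, this bound holds for the palette index, completing the proof.

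**The main obstacle** — which is really more of a bookkeeping point than a genuine difficulty — is making sure the degree-$b$ vertex actually exists and that $u_1, \dots, u_b$ are genuinely vertices of degree $a$ (this uses that $G$ is $(a,b)$-biregular with $a < b$, so one part has all degrees $a$ and the other all degrees $b$, and the neighborhood of a degree-$b$ vertex lies entirely in the degree-$a$ part). One should also note the edge case $a = 1$: then $\lceil b/a \rceil = b$ and the bound reads $\check s(G) \geq b+1$, matching the exact value for $(1,b)$-biregular graphs noted just before the lemma, so the bound is tight there. No genuinely hard step arises; the argument is a clean double-counting.
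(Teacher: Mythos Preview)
Your proof is correct, but it takes a different route from the paper's. The paper argues globally: with bipartition $(X,Y)$ where vertices in $X$ have degree $a$ and vertices in $Y$ have degree $b$, the relation $a|X|=b|Y|$ gives $|X|/|Y|=b/a$; since each color class is a matching with at most $|Y|$ edges, any fixed palette of size $a$ can occur at no more than $|Y|$ vertices of $X$, so pigeonhole forces at least $\lceil |X|/|Y|\rceil=\lceil b/a\rceil$ distinct size-$a$ palettes. Your argument is instead purely local: you look at a single degree-$b$ vertex $v$ and observe that the $b$ colors at $v$ must be covered by the palettes of its neighbours, each of size $a$, so at least $\lceil b/a\rceil$ such palettes are needed. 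Your approach avoids invoking the global counting identity $a|X|=b|Y|$ and the bound on color-class sizes, and it would apply verbatim to any bipartite graph in which some vertex of degree $b$ has all neighbours of degree $a$; the paper's version, on the other hand, more transparently shows \emph{why} biregularity matters via the part sizes. Both are clean one-paragraph arguments.
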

\begin{proof}
	Let $G$ be an $(a,b)$-biregular ($a<b$) graph with bipartition $(X,Y)$
	so that $a|X| = b|Y|$. Consider an arbitrary proper edge coloring of $G$.
	Since any palette of size $a$ appears on at most $|Y|$ vertice, 
	the number of palettes of size $a$ is bounded from below by 
	$\left\lceil\frac{|X|}{|Y|}\right\rceil = 
	\left\lceil\frac{b}{a}\right\rceil$. This implies that 
	$\check s(G) \geq 1 + \lceil \frac{b}{a} \rceil$. 
\end{proof}

The smallest $(a,b)$-biregular graph is
the complete bipartite graph $K_{a,b}$; 
the lower bound in the preceding lemma was obtained in \cite{HornakHudak}
for the case of complete bipartite graphs.
Furthermore,
for complete bipartite
graphs, we have the following; the upper bound shows
that Conjecture \ref{conj:palettebiregular} holds for complete bipartite graphs.
If $a$ and $b$ are positive integers ($a\leq b$),
then we denote the interval of integers from $a$ to $b$
by $[a,b] = \{a, a+1,\dots, b\}$.

\begin{theorem}
\label{thmKmn} If $a<b$ ($a,b\in \mathbb{N}$), then
\begin{center}
$1+\left\lceil \frac{b}{a}\right\rceil \leq
\check{s}\left(K_{a,b}\right)\leq 1+\frac{b}{\gcd(a,b)}$.
\end{center}
\end{theorem}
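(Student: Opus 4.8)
The lower bound requires no new work: $K_{a,b}$ is $(a,b)$-biregular with $a<b$, so Lemma~\ref{lemma:lowbound} applies directly and gives $\check s(K_{a,b}) \ge 1 + \lceil b/a \rceil$. Hence the whole task is the upper bound $\check s(K_{a,b}) \le 1 + b/\gcd(a,b)$, which I would prove by exhibiting a single well-chosen proper edge coloring.

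The plan is to use a layered, product-type construction. Write $d = \gcd(a,b)$, $a' = a/d$, $b' = b/d$, so that $\gcd(a',b')=1$ and, since $a<b$, also $a'<b'$. Since $a<b$ we have $\chi'(K_{a,b}) = b$, so I will use exactly $b$ colors, and I index them by the set $\mathbb{Z}_{b'} \times \mathbb{Z}_d$, which has $b$ elements. I index the part of size $a$ (every vertex has degree $b$) by $\mathbb{Z}_{a'} \times \mathbb{Z}_d$, writing its vertices as $u_{(r,s)}$, and the part of size $b$ (every vertex has degree $a$) by $\mathbb{Z}_{b'} \times \mathbb{Z}_d$, writing its vertices as $w_{(k,t)}$. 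The coloring I propose is
$$\varphi\big(u_{(r,s)}\, w_{(k,t)}\big) = \big(\,r + k \bmod b',\ \ s + t \bmod d\,\big).$$

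I would then verify three things. First, properness at $u_{(r,s)}$: as $(k,t)$ runs over all of $\mathbb{Z}_{b'}\times\mathbb{Z}_d$, the assignment $(k,t)\mapsto (r+k,\,s+t)$ is a translation of the group $\mathbb{Z}_{b'}\times\mathbb{Z}_d$, hence a bijection onto the full color set, so the $b$ edges at $u_{(r,s)}$ get $b$ distinct colors, and in fact the palette of $u_{(r,s)}$ is the entire color set. Second, properness at $w_{(k,t)}$: as $(r,s)$ runs over $\mathbb{Z}_{a'}\times\mathbb{Z}_d$, the first coordinate $r+k$ takes $a'$ distinct values in $\mathbb{Z}_{b'}$ (here $a'\le b'$ is used), and for each fixed such value the second coordinate $s+t$ ranges over all of $\mathbb{Z}_d$; hence the $a$ edges at $w_{(k,t)}$ get $a$ distinct colors. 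So $\varphi$ is a proper $b$-edge coloring. Third, the palette count: every $u$-vertex has palette equal to the whole color set, so that side contributes exactly one palette, while the palette of $w_{(k,t)}$ is $\{\,k+r \bmod b' : 0\le r< a'\,\}\times\mathbb{Z}_d$, which depends only on $k$ and not on $t$; hence the degree-$a$ side contributes at most $b'$ distinct palettes. Altogether $\check s(K_{a,b}) \le 1 + b' = 1 + b/\gcd(a,b)$.

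The only real obstacle is finding this construction; once the block structure is fixed the three verifications are short modular-arithmetic computations. The heuristic that leads to it is the counting already implicit in the proof of Lemma~\ref{lemma:lowbound}: every color class is a matching saturating the degree-$b$ side, so a single palette of size $a$ on the degree-$a$ side may legitimately recur at several vertices, and splitting the color set into $d=\gcd(a,b)$ parallel layers makes each of the $b'$ realized palettes occur at exactly $d$ vertices — precisely the equality case of the trade-off between the number of palettes and their multiplicities.
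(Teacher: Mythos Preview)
Your proof is correct and follows essentially the same approach as the paper: both exhibit a product-type proper $b$-edge coloring whose color set factors as (something of size $b/d$) $\times$ (something of size $d$), with one factor added modulo $b/d$ from the block indices and the other modulo $d$ from the within-block indices. The paper writes this using integer arithmetic with auxiliary functions $f(i)=1+(i-1)\bmod d$ and $h(i,j)=(\lfloor(i-1)/d\rfloor+\lfloor(j-1)/d\rfloor)\bmod(b/d)$, then sets $\beta(u_iv_j)=\alpha(u_{f(i)}v_{f(j)})+d\,h(i,j)$ for a Latin-square coloring $\alpha$ of $K_{d,d}$; your Cartesian-product indexing $\mathbb{Z}_{b'}\times\mathbb{Z}_d$ is the same construction in more transparent notation.
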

\begin{proof} 
The lower bound follows from Lemma \ref{lemma:lowbound}.

We set $d=\gcd(a,b)$ and now 
show that $\check{s}\left(K_{a,b}\right)\leq 1+\frac{b}{d}$.
Let $$V\left(K_{a,b}\right)=\{u_{1},\ldots,u_{a},v_{1},\ldots,v_{b}\} 
\text{ and }
E\left(K_{a,b}\right)=\{u_{i}v_{j}: 1\leq i\leq a, 1\leq
j\leq b\}.$$ Also, let $G$ be a subgraph of $K_{a,b}$ induced by 
vertices $\{u_{1},\ldots,u_{d},v_{1},\ldots,v_{d}\}$; 
so $G$ is isomorphic to the graph $K_{d,d}$.

We define an edge coloring $\alpha$ of $G$ as follows: for
$1\leq i\leq d$ and $1\leq j\leq d$, let
\begin{center}
$\alpha\left(u_{i}v_{j}\right)=\left\{
\begin{tabular}{ll}
$i+j-1\pmod{d}$, & if $i+j\neq d+1$,\\
$d$, & if $i+j=d+1$.
\end{tabular}%
\right.$
\end{center}
The coloring $\alpha$ is a proper edge coloring of $G$ and $S_{G}(u_{i},\alpha)=S_{G}(v_{i},\alpha)=[1,d]$ for $1\leq i\leq d$.

Next we construct a proper $b$-edge coloring of
$K_{a,b}$. Before we give the explicit definition of the
coloring, we need two auxiliary functions $f$ and $h$. For $i\in \mathbb{N}$, we
define
$f(i)=1+(i-1)\pmod{d}$ and
for $i,j\in \mathbb{N}$, we define 
\begin{center}
$h(i,j)=\left(\left\lfloor\frac{i-1}{d}\right\rfloor+\left\lfloor\frac{j-1}{d}\right\rfloor\right)\pmod{\frac{b}{d}}$.
\end{center}
Now we define an edge coloring $\beta$ of $K_{a,b}$ by, for
$1\leq i\leq a$ and $1\leq j\leq b$, setting
\begin{center}
$\beta(u_{i}v_{j})=\alpha\left(u_{f(i)}v_{f(j)}\right)+d h(i,j)$.
\end{center}

Let us verify that $\beta$ is a proper $b$-edge coloring of $K_{a,b}$
with exactly $1+\frac{b}{a}$ palettes.
By the definition of $\beta$ and taking into account that
$S_{G}(u_{i},\alpha)=S_{G}(v_{i},\alpha)=[1,d]$ for $1\leq i\leq d$, we have

$$S(u_{i},\beta)=[1,b] \text{ for } 1\leq i\leq a,$$
and
$$S\left(v_{(j-1) d+1},\beta\right)=S\left(v_{(j-1) d+2},\beta\right)=\cdots = S\left(v_{j d},\beta\right) 
= \bigcup_{i=0}^{\frac{a}{d}-1} \{a_id+1,\dots, a_id+d\},$$
for $1\leq j\leq \frac{b}{d}$, and where 
$a_i =\left(i+ \left\lceil\frac{j-1}{d}\right\rceil\right) \pmod{\frac{b}{d}}$.
This implies that $\beta$ is a proper $b$-edge coloring of $K_{a,b}$ with $1+\frac{b}{d}$ distinct palettes.
\end{proof}

From the preceding theorem, we deduce the following, which was
first obtained in \cite{HornakHudak}.

\begin{corollary}
\label{cor:Kab}
If $\gcd(a,b)=a$ ($a<b$), then
$\check{s}\left(K_{a,b}\right)=1+\frac{b}{a}$.
\end{corollary}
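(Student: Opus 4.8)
The plan is to derive Corollary~\ref{cor:Kab} directly from Theorem~\ref{thmKmn} by observing that when $\gcd(a,b)=a$ the lower and upper bounds coincide. First I would note that $\gcd(a,b)=a$ means $a \mid b$, so $\frac{b}{a}$ is an integer, and hence $\left\lceil \frac{b}{a}\right\rceil = \frac{b}{a}$. Substituting into the two-sided inequality of Theorem~\ref{thmKmn}, the lower bound becomes $1 + \frac{b}{a}$ and the upper bound becomes $1 + \frac{b}{\gcd(a,b)} = 1 + \frac{b}{a}$ as well. The two bounds agree, forcing $\check{s}(K_{a,b}) = 1 + \frac{b}{a}$.

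There is essentially no obstacle here: the entire content is the arithmetic identity $\left\lceil \frac{b}{a}\right\rceil = \frac{b}{a} = \frac{b}{\gcd(a,b)}$ valid precisely when $a \mid b$. One should just be slightly careful that the hypothesis $a<b$ of Theorem~\ref{thmKmn} is inherited (it is, since it is assumed in the corollary as well), so that theorem applies verbatim. I would present this as a one- or two-sentence proof.

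\begin{proof}
Since $\gcd(a,b)=a$, we have $a \mid b$, so $\left\lceil \frac{b}{a}\right\rceil = \frac{b}{a} = \frac{b}{\gcd(a,b)}$. Hence the lower and upper bounds in Theorem~\ref{thmKmn} both equal $1+\frac{b}{a}$, and therefore $\check{s}\left(K_{a,b}\right)=1+\frac{b}{a}$.
\end{proof}
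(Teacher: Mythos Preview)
Your proof is correct and is exactly the intended deduction: the paper itself gives no separate argument, simply noting that the corollary follows from Theorem~\ref{thmKmn}. Your observation that $\gcd(a,b)=a$ forces the two bounds in Theorem~\ref{thmKmn} to coincide is precisely the point.
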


\bigskip

In \cite{HornakHudak} the
palette index of the complete bipartite graphs $K_{2,2r}$ was determined;
the following generalization follows from Theorem \ref{prop:bipevendeg}.
Here, and in the following , we assume $r$ to be a positive integer.

\begin{corollary}
\label{cor:2bireg}
	If $G$ is a $(2,2r)$-biregular graph, then $\check s(G) = r+1$.
\end{corollary}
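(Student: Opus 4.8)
The plan is to establish matching upper and lower bounds. For the lower bound, Lemma~\ref{lemma:lowbound} gives $\check s(G) \geq 1 + \lceil \frac{2r}{2}\rceil = 1 + r$, since a $(2,2r)$-biregular graph has $a = 2 < b = 2r$ (assuming $r \geq 2$; the case $r = 1$ is trivial, as $G$ is then $2$-regular bipartite, hence Class $1$ and $\check s(G) = 1 = r+1$ is false — so I should note that for $r=1$ one actually needs $G$ to be $2$-regular, giving $\check s(G)=1$, but wait, the claim says $r+1 = 2$; let me instead just treat $r \geq 1$ carefully, observing that a $(2,2)$-biregular graph need not be connected, but every component is an even cycle, so $\check s(G) = 1$, meaning the corollary as stated presumably intends $r \geq 1$ with the understanding... actually I will simply handle $r \ge 1$ and if $r=1$ note $G$ is a disjoint union of even cycles so $\check s(G) = 1$; hmm, this contradicts $r+1=2$). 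I will assume $r \geq 2$ for the substantive argument and remark on small cases.

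For the upper bound $\check s(G) \leq r + 1$, the idea is to reduce to Theorem~\ref{prop:bipevendeg}. A $(2,2r)$-biregular graph $G$ is already even, with $\Delta(G) = 2r$ and degree set $D(G) = \{2, 2r\}$. Applying Theorem~\ref{prop:bipevendeg} directly gives $\check s(G) \leq \binom{r}{1} + \binom{r}{r} = r + 1$, which is exactly the desired bound. So the upper bound is immediate from the earlier result, and the only real content is combining it with Lemma~\ref{lemma:lowbound}.

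Thus the proof is essentially: the lower bound $\check s(G) \geq 1 + r$ follows from Lemma~\ref{lemma:lowbound} (with $a=2$, $b=2r$), and the upper bound $\check s(G) \leq r+1$ follows from Theorem~\ref{prop:bipevendeg} since the only degrees present are $2$ and $2r$, contributing $\binom{r}{1}$ and $\binom{r}{r}$ palettes respectively. Combining these two inequalities yields $\check s(G) = r+1$.

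I expect no genuine obstacle here — the result is a direct corollary. The only point requiring a moment's care is confirming that Theorem~\ref{prop:bipevendeg}'s construction indeed produces \emph{at most} one palette among the degree-$2r$ vertices (they all have degree equal to $\Delta(G)$, so $\binom{r}{r}=1$) and at most $r$ palettes among the degree-$2$ vertices, and that both bounds are simultaneously achievable in a single coloring, which is exactly what the proof of Theorem~\ref{prop:bipevendeg} delivers. If one wants the corollary to hold for $r = 1$ as well, one should observe separately that a $2$-regular bipartite graph is a disjoint union of even cycles and hence Class~$1$; but since the statement claims $r+1 = 2$ in that case, I would restrict attention to $r \geq 2$ or note that the intended reading excludes the degenerate $2$-regular situation.
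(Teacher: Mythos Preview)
Your proof is correct and matches the paper's for the upper bound: both simply invoke Theorem~\ref{prop:bipevendeg}, which for degree set $\{2,2r\}$ gives $\binom{r}{1}+\binom{r}{r}=r+1$. For the lower bound you appeal to Lemma~\ref{lemma:lowbound}, whereas the paper argues directly that if at most $r-1$ palettes occurred at the degree-$2$ vertices then at most $2(r-1)$ colors would be used in total (every edge meets a degree-$2$ vertex), contradicting the existence of a vertex of degree $2r$; the two arguments are equally short and yield the same bound, so the approaches are essentially the same. Your extended digression about the degenerate case $r=1$ is unnecessary and should be cut.
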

\begin{proof}
	The upper bound follows from Theorem \ref{prop:bipevendeg}.
	The lower bound follows from the fact that assuming that
	at most $r-1$ palettes occur at vertices of degree $2$ implies
	that $G$ has a proper edge coloring with $2r-2$ colors.
\end{proof}

Similarly, we have the following:

\begin{corollary}
\label{cor:2bireg2}
	If $G$ is a $(2r-2,2r)$-biregular graph, then $\check s(G) \leq r+1$.
\end{corollary}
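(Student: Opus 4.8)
The plan is to mimic the strategy used for Corollary~\ref{cor:2bireg}, namely to pass to an even regular supergraph, apply Petersen's theorem, and control the palettes that arise. Let $G$ be $(2r-2,2r)$-biregular with bipartition $(X,Y)$, where every vertex of $X$ has degree $2r-2$ and every vertex of $Y$ has degree $2r$. The graph $G$ is already even, so Theorem~\ref{prop:bipevendeg} applies directly and gives
$$\check s(G) \leq \binom{r}{r-1} + \binom{r}{r} = r+1,$$
since $D(G) = \{2r-2, 2r\}$ and $\Delta(G) = 2r$. Thus the bound is essentially immediate from Theorem~\ref{prop:bipevendeg}, and the only work is to spell out why the count collapses to $r+1$ rather than the naive sum.

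More concretely, I would run the construction in the proof of Theorem~\ref{prop:bipevendeg}: add one loop at each vertex of $X$ to obtain a $2r$-regular multigraph $G^\star$, decompose $G^\star$ into $r$ edge-disjoint $2$-factors $F_1,\dots,F_r$ by Petersen's theorem, delete the loops, and color the edges of $F_i'$ (a union of even cycles, by bipartiteness) alternately with colors $2i-1$ and $2i$. Every vertex of $Y$ has degree $2$ in all $r$ of the $F_i'$, so its palette is forced to be $\{1,2,\dots,2r\}$ — exactly one palette. Every vertex of $X$ has degree $2$ in exactly $r-1$ of the $F_i'$ and degree $0$ in the remaining one, so its palette is $[1,2r]$ with one pair $\{2i_0-1, 2i_0\}$ removed; there are $\binom{r}{r-1} = r$ choices for the missing pair. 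Altogether at most $r+1$ distinct palettes occur, giving $\check s(G) \leq r+1$.

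Since this is stated as a corollary immediately following Corollary~\ref{cor:2bireg}, the natural write-up is very short: just observe that $G$ is even with degree set $\{2r-2,2r\}$ and maximum degree $2r$, and invoke Theorem~\ref{prop:bipevendeg}. There is no real obstacle here — unlike Corollary~\ref{cor:2bireg}, no matching lower bound is claimed, so we need not verify sharpness (and indeed it need not be sharp in general). The one point worth a sentence is the palette count for degree-$(2r-2)$ vertices: the relevant binomial is $\binom{\Delta/2}{(2r-2)/2} = \binom{r}{r-1} = r$, and adding the single palette of the degree-$2r$ vertices yields $r+1$.

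\begin{proof}
	The graph $G$ is an even bipartite graph with $D(G) = \{2r-2, 2r\}$ and $\Delta(G) = 2r$. Hence, by Theorem \ref{prop:bipevendeg},
	$$\check s(G) \leq \binom{r}{r-1} + \binom{r}{r} = r + 1. \qedhere$$
\end{proof}
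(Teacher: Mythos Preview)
Your proposal is correct and takes essentially the same approach as the paper: the paper states the corollary with only the preface ``Similarly, we have the following,'' indicating that the upper bound is obtained by applying Theorem~\ref{prop:bipevendeg} with $D(G)=\{2r-2,2r\}$ and $\Delta(G)=2r$, exactly as you do. Your detailed unpacking of the Petersen decomposition and the palette count is accurate and in fact more explicit than what the paper provides.
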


This upper bound is sharp e.g. for complete bipartite graphs of small
order, since $\check s(K_{2,4})=3$ and $\check s(K_{4,6})=4$.

We remark that the two previous corollaries do not only hold for biregular graphs,
but for any bipartite graph where the vertex degrees lie in the set
$\{2, 2r\}$ and $\{2r-2, 2r\}$, respectively.

\bigskip

Our next result on biregular graphs is an easy consequence of a result
on interval colorings. In \cite{HansonLotenToft,KamMir}, it was proved that
every $(2,2r+1)$-biregular graph has an interval coloring using 
$2r+2$ colors.

\begin{proposition}
\label{prop:22r+1}
	If $G$ is a $(2,2r+1)$-biregular graph, then $r+2 \leq \check s(G) \leq 2r+2$.
\end{proposition}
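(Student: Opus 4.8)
The plan is to establish the two bounds separately. For the upper bound $\check s(G) \leq 2r+2$, I would invoke the result of \cite{HansonLotenToft,KamMir} cited immediately before the statement: every $(2,2r+1)$-biregular graph $G$ admits an interval coloring with $2r+2$ colors. Under such a coloring, every vertex of degree $2$ receives a palette of the form $\{t,t+1\}$ for some $t\in[1,2r+1]$, giving at most $2r+1$ distinct palettes at degree-$2$ vertices; every vertex of degree $2r+1$ receives a palette of the form $[t,t+2r]$ for some $t$, but since only colors $1,\dots,2r+2$ are used, the only possibilities are $[1,2r+1]$ and $[2,2r+2]$, i.e.\ at most $2$ distinct palettes at degree-$(2r+1)$ vertices. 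Hence $\check s(G)\leq (2r+1)+2 = 2r+3$. To sharpen this to $2r+2$, I would observe that the two ``full'' palettes $[1,2r+1]$ and $[2,2r+2]$ together cover colors $1$ and $2r+2$ only through consecutive intervals, and argue that one can normalize the interval coloring so that all degree-$(2r+1)$ vertices share a single palette — for instance, if only $[1,2r+1]$ occurs we are done, and otherwise a shift/exchange argument on the colors at the degree-$2$ side merges the count; more simply, one checks that when both full palettes occur, the degree-$2$ palettes $\{1,2\}$ and $\{2r+1,2r+2\}$ cannot both be forced, trimming the degree-$2$ count to $2r$. Either way the total is $2r+2$.

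For the lower bound $\check s(G) \geq r+2$, the natural starting point is Lemma~\ref{lemma:lowbound}, which for an $(a,b)$-biregular graph with $a<b$ gives $\check s(G)\geq 1+\lceil b/a\rceil$. With $a=2$ and $b=2r+1$ this yields $\check s(G)\geq 1 + \lceil (2r+1)/2\rceil = 1+(r+1) = r+2$, which is exactly the claimed bound. So in fact the lower bound is an immediate corollary of Lemma~\ref{lemma:lowbound} and requires no further work.

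The main obstacle, then, is purely the upper bound, and specifically the improvement from the easy $2r+3$ down to $2r+2$. The easy estimate just adds ``$(\text{number of small palettes})$'' and ``$(\text{number of large palettes})$'' as $2r+1$ and $2$; the saving of one must come from showing these two counts cannot simultaneously be maximal, or from a direct normalization of the interval coloring. I expect the cleanest route is: take the interval $(2r+2)$-coloring $\varphi$; if every degree-$(2r+1)$ vertex has palette $[1,2r+1]$ (equivalently, no edge is colored $2r+2$ — impossible unless the coloring is actually a $(2r+1)$-coloring, in which case degree-$2$ palettes number at most $2r$ anyway, total $\leq 2r+1$), or if every such vertex has palette $[2,2r+2]$, we already win; in the mixed case, note a degree-$2$ vertex with palette $\{1,2\}$ must be adjacent only to degree-$(2r+1)$ vertices whose palette contains $1$, hence equal to $[1,2r+1]$, while a degree-$2$ vertex with palette $\{2r+1,2r+2\}$ forces its neighbors to have palette $[2,2r+2]$; a short adjacency/parity argument on the bipartite structure (each color class being a union of paths and even cycles alternating between the parts) shows one of the extreme degree-$2$ palettes $\{1,2\}$, $\{2r+1,2r+2\}$ is absent, dropping the degree-$2$ count to $2r$ and giving $2r+2$ in total. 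I would write this last adjacency argument carefully, as it is the only non-routine point; everything else is bookkeeping over interval-coloring palettes.
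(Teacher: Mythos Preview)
Your lower bound is fine and, in fact, is exactly what the paper intends: Lemma~\ref{lemma:lowbound} with $a=2$, $b=2r+1$ gives $\check s(G)\geq 1+\lceil (2r+1)/2\rceil = r+2$ immediately. No further work is needed there.

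The upper bound, however, has a genuine gap. Your direct palette count on an interval $(2r+2)$-coloring correctly yields at most $2r+3$, but the ``parity/adjacency'' argument you sketch to shave off one palette does not go through. Concretely, take $K_{2,2r+1}$ with parts $Y=\{y_1,y_2\}$ and $X=\{x_1,\dots,x_{2r+1}\}$, and color $y_1x_i$ with $i$ and $y_2x_i$ with $i+1$. This is an interval $(2r+2)$-coloring in which $y_1$ has palette $[1,2r+1]$, $y_2$ has palette $[2,2r+2]$, and $x_i$ has palette $\{i,i+1\}$ for every $i\in[1,2r+1]$. So \emph{both} large palettes and \emph{all} $2r+1$ small palettes, including the extremes $\{1,2\}$ and $\{2r+1,2r+2\}$, occur simultaneously. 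Your claimed saving is therefore false as stated, and the approach stalls at $2r+3$.

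The paper's route is different and avoids this problem entirely: starting from the interval $(2r+2)$-coloring, it reduces all colors modulo $2r+1$, obtaining a \emph{cyclic} interval $(2r+1)$-coloring. In that coloring every degree-$(2r+1)$ vertex uses all $2r+1$ colors and hence has the single palette $[1,2r+1]$, while every degree-$2$ vertex has one of the at most $2r+1$ cyclic intervals of length $2$. The total is $1+(2r+1)=2r+2$. The modular reduction is precisely the missing idea; once you apply it, the bookkeeping you already have finishes the argument cleanly.
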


\begin{proof}
	Let $f$ be an interval coloring of $G$ using exactly $2r+2$ colors.
	By taking all colors modulo $2r+1$, we obtain a cyclic interval
	$(2r+1)$-coloring of $G$; such a coloring
	yields at most $2r+2$ distinct palettes in $G$.
	
	The lower bound can be proved as in the proof 
	of Corollary \ref{cor:2bireg}.
\end{proof}

We note that the upper bound in the preceding proposition is sharp, since
$\check s(K_{2,3}) = 4$; in fact it is not hard to see that the upper bound
in Proposition \ref{prop:22r+1} is sharp for all
$(2,3)$-biregular graphs.

\bigskip

Next, we shall establish that Conjecture \ref{conj:palettebiregular}
holds for some families of
biregular graphs with small vertex degrees.
In fact, we shall deduce these results from more general propositions.

Corollary \ref{prop:bipodddegree}
implies that Conjecture \ref{conj:Avesani} holds for
all $(3,3r)$-biregular and $(3r-3, 3r)$-biregular graphs; the upper bound
from Corollary \ref{prop:bipodddegree}
can be slightly improved as follows.

\begin{proposition}
\label{prop:bireg(3,3r)}
	Let $G$ be a bipartite graph.
\begin{itemize}
\item[(i)]	If $G$ is $(3,3r)$-biregular ($r\geq 2$), 
	then $r+1\leq\check s(G) \leq r^{2}+1$.
	
\item [(ii)]	If $G$ is $(3r-3,3r)$-biregular graph ($r\geq 2$), 
	then $\check s(G) \leq r^{2}+1$.
\end{itemize}
\end{proposition}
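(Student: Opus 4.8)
The lower bound in (i) is immediate: since $G$ is $(3,3r)$-biregular with $3<3r$ and $\lceil 3r/3\rceil = r$, Lemma \ref{lemma:lowbound} gives $\check s(G)\ge r+1$. So the real task in both parts is the upper bound $\check s(G)\le r^{2}+1$, and the plan is to handle (i) and (ii) by the same three-step scheme: decompose $G$ into three edge-disjoint ``thin'' biregular subgraphs, color two of them \emph{jointly} so that only a few palettes appear on the low-degree side, and color the third one with an entirely fresh block of $r$ colors.

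For the decomposition, let $(X,Y)$ be the bipartition with the degree-$3r$ vertices in $Y$, so the vertices of $X$ have degree $3$ in case (i) and degree $3r-3$ in case (ii); in either case every degree of $G$ is divisible by $3$. First I would split each vertex $v$ into $d_G(v)/3$ copies and distribute the edges at $v$ arbitrarily among these copies in groups of three; the resulting bipartite graph is $3$-regular, hence Class $1$ by K\"onig's edge coloring theorem, and a proper $3$-edge-coloring of it pulls back to a coloring of $E(G)$ in which each of the three colors occurs exactly $d_G(v)/3$ times at every vertex $v$. This exhibits $G=G_1\cup G_2\cup G_3$ with the $G_i$ pairwise edge-disjoint, each $G_i$ being $(1,r)$-biregular in case (i) (so a disjoint union of stars $K_{1,r}$ centered in $Y$) and $(r-1,r)$-biregular in case (ii).

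Now put $H=G_1\cup G_2$, which is $(2,2r)$-biregular in case (i) and $(2r-2,2r)$-biregular in case (ii). Running the argument behind Corollary \ref{cor:2bireg}, respectively Corollary \ref{cor:2bireg2} (add loops to make $H$ a $2r$-regular multigraph, apply Theorem \ref{th:Petersen}, delete the loops, and properly $2$-color each resulting even cycle with a pair of colors $\{2i-1,2i\}$), I get a proper edge coloring of $H$ with colors $1,\dots,2r$ in which every vertex of $Y$ has palette $[1,2r]$ while every vertex of $X$ has one of only $r$ palettes, namely $\{2i-1,2i\}$ in case (i) or $[1,2r]\setminus\{2i-1,2i\}$ in case (ii), for some $1\le i\le r$. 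Then I color $G_3$ with the fresh colors $2r+1,\dots,3r$: in case (i) each star of $G_3$ is colored with all $r$ of them, and in case (ii), $G_3$ has maximum degree $r$, so K\"onig's theorem gives a proper $r$-edge-coloring using $\{2r+1,\dots,3r\}$; in both cases every vertex of $Y$ gets palette $[2r+1,3r]$ and every vertex of $X$ one of only $r$ palettes (the singleton $\{c\}$, resp. $[2r+1,3r]\setminus\{c\}$). Since $H$ and $G_3$ are edge-disjoint and use disjoint color sets, the union of the two colorings is a proper $3r$-edge-coloring of $G$; every vertex of $Y$ now has palette $[1,3r]$, while the palette of a vertex of $X$ is determined by its $H$-palette (at most $r$ possibilities) together with its $G_3$-palette (at most $r$ possibilities), so at most $r^{2}$ palettes occur on $X$. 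Hence $\check s(G)\le r^{2}+1$.

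The vertex-splitting and the final count are routine; the point I expect to need the most care in the write-up is verifying that the coloring produced via Petersen's theorem really does yield only the claimed $r$ palettes on $X$ when $H$ is merely $(2,2r)$- or $(2r-2,2r)$-biregular rather than regular, which amounts to tracking which $2$-factor absorbs the loop(s) added at each low-degree vertex of $H$.
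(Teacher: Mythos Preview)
Your argument is correct and follows essentially the same scheme as the paper's proof: peel off a ``thin'' $(1,r)$-biregular (respectively $(r-1,r)$-biregular) subgraph, color the remaining $(2,2r)$- or $(2r-2,2r)$-biregular graph via the Petersen-based coloring of Theorem~\ref{prop:bipevendeg} so that only $r$ palettes appear on $X$, and color the thin piece with a fresh block of $r$ colors. The paper obtains the thin piece in~(i) by splitting only the $Y$-vertices and taking a \emph{single} perfect matching in the resulting cubic bipartite graph, whereas you take a full proper $3$-edge-coloring of the split graph and use one color class as the thin piece; this is a cosmetic difference, and your uniform treatment of~(i) and~(ii) via the same $3$-splitting is a pleasant bonus over the paper's ``the proof of~(ii) is similar''.
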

\begin{proof}
Let us first note that the the lower bound in (i) follows from
Lemma \ref{lemma:lowbound}.

We shall prove the upper bound in (i); the proof of the upper bound in (ii)
is similar.
Consequently, let $G$ be a $(3,3r)$-biregular bipartite graph with 
bipartition $(X,Y)$, 
and let us show that $\check s(G) \leq r^{2}+1$.

Define a new graph $H$ from $G$ by replacing each vertex $y\in Y$ by 
$r$ vertices
$y^{(1)},y^{(2)},\ldots,y^{(r)}$ of degree $3$, 
where each $y^{(i)}$ is adjacent to three neighbors of $y$ in $G$, 
and $y^{(i)}$ and $y^{(j)}$ have disjoint neighborhoods if $i \neq j$.
Clearly, $H$ is a cubic bipartite graph, and so by Hall's matching theorem, $H$ contains a perfect matching $M$.

In the graph $G$, $M$ induces a subgraph $F$ in which each vertex $y\in Y$ has degree $r$ and each vertex $x\in X$ has degree $1$. 
Let us consider the graph $G^{\prime}=G-E(F)$. Since $G^{\prime}$ 
is a $(2,2r)$-biregular graph, by proceeding
as in the proof of 
Theorem \ref{prop:bipevendeg} it can be shown that $G^{\prime}$ has a proper 
$2r$-edge coloring $\alpha$ such that for each $y\in Y$,
$S(y,\alpha)=[1,2r]$, and for each $x\in X$,
$S(x,\alpha)=\{2i-1,2i\}$ for some $i$ ($1\leq i\leq r$). Let us now define an edge coloring $\beta$ of $F$ as follows: for each vertex $y\in Y$, we color the edges of $F$ incident with $y$ with colors $2r+1,2r+2,\ldots,3r$.\\

Finally, we define an edge coloring $\gamma$ of $G$ as follows:

\begin{itemize}

	\item[1)] for every $e\in E(G^{\prime})$, let $\gamma(e)=\alpha(e)$;

	\item[2)] for every $e\in E(F)$, let $\gamma(e)=\beta(e)$.

\end{itemize}

Clearly, $\gamma$ is a proper edge coloring of $G$ with colors
$1,2,\ldots,3r$ such that for each $y\in Y$, $S(y,\gamma)=[1,3r]$, and
for each $x\in X$, $S(x,\gamma)=\{2i-1,2i,2r+j\}$ for some $i,j\in [1,r]$. 
This implies that $\check s(G)\leq r^{2}+1$. 
\end{proof}

We remark that the lower bound in part (i) of Proposition 
\ref{prop:bireg(3,3r)} is sharp by Corollary \ref{cor:Kab}.
Hence, this also holds for parts (i) and (ii) of
the following consequence of Proposition 
\ref{prop:bireg(3,3r)}.

\begin{corollary}
\label{cor:3}
Let $G$ be a bipartite graph.
\begin{itemize}
	\item[(i)] If $G$ is $(3,6)$-biregular, 
	then $3\leq\check s(G) \leq 5$.

	\item[(ii)]  If $G$ is $(3,9)$-biregular, 
	then $4\leq\check s(G) \leq 10$.
	
	\item[(iii)]  If $G$ is $(6,9)$-biregular, 
	then $\check s(G) \leq 10$.
	
	\end{itemize}
\end{corollary}

The preceding result shows that Conjecture \ref{conj:palettebiregular} holds
for some biregular graphs with vertex degrees divisible by three.
Let us now turn to biregular graphs with vertex degrees divisible by four.
In Section 3, we deduced that Conjecture \ref{conj:palettebiregular} holds
for $(4,6)$-biregular graphs.
If $G$ is a $(4,4r)$-biregular or $(4r-4,4r)$-biregular 
graph, then Theorem \ref{prop:bipevendeg}
implies that
$\check s(G) \leq 1+ r(2r-1)$.
Our next proposition yields a slightly better bound.

\begin{proposition}
\label{prop:bireg(4,4r)}
Let $G$ be a bipartite graph.
\begin{itemize}
	
	\item[(i)] If $G$ is $(4,4r)$-biregular ($r\geq 2$),
	then $r+1\leq\check s(G) \leq r^{2}+1$.

	\item[(ii)] If $G$ is $(4r-4,4r)$-biregular  ($r\geq 2$),
	then $\check s(G) \leq r^{2}+1$.
	
\end{itemize}
	
\end{proposition}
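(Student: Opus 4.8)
The plan is to mimic the structure of the proof of Proposition~\ref{prop:bireg(3,3r)}, splitting off a suitable factor whose removal leaves a $(2,2r)$-biregular-type graph to which Theorem~\ref{prop:bipevendeg} (or rather the coloring constructed in its proof) applies. For part (i), let $G$ be $(4,4r)$-biregular with bipartition $(X,Y)$, where every $x\in X$ has degree $4$ and every $y\in Y$ has degree $4r$. The first step is to extract from $G$ a spanning subgraph $F$ in which every vertex of $X$ has degree $2$ and every vertex of $Y$ has degree $2r$; equivalently, $F$ is a $(2,2r)$-biregular spanning subgraph, and $G-E(F)$ is again $(2,2r)$-biregular. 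To obtain such an $F$, I would pass to the ``blow-up'' graph $H$ obtained by splitting each $y\in Y$ into $r$ vertices of degree $4$, each adjacent to a disjoint quadruple of the neighbours of $y$ (so $H$ is $4$-regular bipartite), decompose $H$ into two perfect matchings (K\"onig), or rather into a $2$-factor by Petersen's theorem (Theorem~\ref{th:Petersen}), take the corresponding $2$-regular spanning subgraph of $H$, and let $F$ be its image in $G$. Each $x\in X$ then has degree $2$ in $F$ and each $y\in Y$ has degree $2r$ in $F$, as desired.

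The second step is to colour $F$ and $G'=G-E(F)$ separately. Both are $(2,2r)$-biregular, so by proceeding exactly as in the proof of Theorem~\ref{prop:bipevendeg} (decompose into even subgraphs via Petersen and $2$-colour each even-cycle-union using a pair of consecutive colours), the graph $G'$ admits a proper $2r$-edge-colouring $\alpha$ with colour set $[1,2r]$ such that $S_{G'}(y,\alpha)=[1,2r]$ for every $y\in Y$ and $S_{G'}(x,\alpha)=\{2i-1,2i\}$ for some $i\in[1,r]$ for every $x\in X$. Likewise $F$ admits a proper $2r$-edge-colouring $\beta$, using a fresh colour set $[2r+1,4r]$, with $S_F(y,\beta)=[2r+1,4r]$ for every $y\in Y$ and $S_F(x,\beta)=\{2r+2j-1,2r+2j\}$ for some $j\in[1,r]$ for every $x\in X$. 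Combining $\alpha$ and $\beta$ gives a proper $4r$-edge-colouring $\gamma$ of $G$ with $S(y,\gamma)=[1,4r]$ for every $y\in Y$, and $S(x,\gamma)=\{2i-1,2i\}\cup\{2r+2j-1,2r+2j\}$ for some $i,j\in[1,r]$ for every $x\in X$. Hence there are at most $r^2$ distinct palettes at vertices of $X$ and one at vertices of $Y$, giving $\check s(G)\le r^2+1$. The lower bound $r+1\le\check s(G)$ is immediate from Lemma~\ref{lemma:lowbound} since $\lceil 4r/4\rceil=r$.

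For part (ii), $G$ is $(4r-4,4r)$-biregular; the argument is the same except that one splits off a factor $F$ in which the degree-$(4r-4)$ side has degree $2r-2$ and the degree-$4r$ side has degree $2r$, leaving $G'$ which is $(2r-2,2r)$-biregular, and then applies the colourings from (the proofs behind) Corollaries~\ref{cor:2bireg} and~\ref{cor:2bireg2}. The degree-$(4r-4)$ vertices then receive a palette that is a union of $r-1$ consecutive pairs from $[1,2r]$ in $\alpha$ together with $r-1$ consecutive pairs from $[2r+1,4r]$ in $\beta$; a crude count of the possibilities again yields at most $r^2$ palettes on that side and one on the other, so $\check s(G)\le r^2+1$.

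The main obstacle I anticipate is verifying that the factor $F$ can be chosen so that \emph{both} $F$ and $G-E(F)$ decompose cleanly enough for the palette-counting to close — in particular, making sure that after removing $F$ the leftover graph $G'$ really is $(2,2r)$-biregular (resp.\ $(2r-2,2r)$-biregular) and that the colourings of $F$ and $G'$ use disjoint colour sets and have the rigid palette structure quoted above. The blow-up/Petersen trick handles the existence of $F$, but one must be careful that splitting a vertex of $Y$ into $r$ pieces of degree $4$ is possible, which requires $4r$ to be divisible by $4$ — automatic here — and that the image of a $2$-factor of $H$ in $G$ has the claimed degrees; this is routine but needs to be spelled out. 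The palette bound $r^2$ itself is not tight and no cleverness is needed there: it is just $|\{1,\dots,r\}|\times|\{1,\dots,r\}|$.
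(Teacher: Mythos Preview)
Your proposal is correct and lands on exactly the same decomposition as the paper --- split $G$ into two $(2,2r)$-biregular spanning subgraphs, colour each with the paired-colour scheme from the proof of Theorem~\ref{prop:bipevendeg} on disjoint colour sets, and count palettes --- but you reach that decomposition by a different mechanism. The paper does not blow up $Y$; instead it observes that since $G$ is $(4,4r)$-biregular all degrees are even, takes a closed Eulerian trail (component by component), and colours its edges alternately Red and Blue; the Red subgraph $G_R$ and the Blue subgraph $G_B$ are then each $(2,2r)$-biregular. Your route via splitting each $y\in Y$ into $r$ vertices of degree $4$ and extracting a $2$-factor of the resulting $4$-regular graph is a valid alternative and yields the same $F$ and $G-E(F)$.

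The Eulerian-trail argument has the minor advantage that it handles parts (i) and (ii) uniformly with no modification: for a $(4r-4,4r)$-biregular graph the same alternation immediately halves every degree, giving two $(2r-2,2r)$-biregular pieces. Your blow-up is tailored to part~(i), and your sketch for~(ii) (``the argument is the same'') would in fact need a different construction; the cleanest fix is precisely the Eulerian alternation. Either way, the palette count you give for~(ii) is right: on the $(2r-2)$-side each piece contributes one of $\binom{r}{r-1}=r$ possible paired palettes, so at most $r\cdot r$ in total, plus the single full palette on the $(4r)$-side.
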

\begin{proof}
As in the proof of the preceding proposition, the lower bound
in (i) follows from Lemma \ref{lemma:lowbound}. Let us
prove the upper bound in part (i); part (ii) can be proved
similarly.
Consequently,
let $G$ be a $(4,4r)$-biregular bipartite graph with bipartition $(X,Y)$ 
and let us show that $\check s(G) \leq r^{2}+1$.

Without loss of generality, we may assume that $G$ is connected
(otherwise, we color every component of $G$ as below). Since $G$
is bipartite and all vertex degrees in $G$ are even, $G$ has a
closed Eulerian trail $C$ with an even number of edges. We color the
edges of $G$ with colors ``Red'' and
``Blue'' by traversing the edges
of $G$ along the trail $C$; we color an odd-indexed edge in $C$ with
color Red, and an even-indexed
edge in $C$ with color Blue. Let
$E_{R}$ and $E_{B}$ be the sets of all Red and Blue
edges in $G$, respectively; then $E(G)=E_{R}\cup E_{B}$ and
$E_{R}\cap E_{B} = \emptyset$. Define the subgraphs $G_{R}$ and
$G_{B}$ of $G$ as follows:
\begin{center}
$V\left(G_{R}\right)=V\left(G_{B}\right)=V(G)$ and
$E\left(G_{R}\right)=E_{R}$, $E\left(G_{B}\right)=E_{B}$.
\end{center}

Since $G$ is $(4,4r)$-biregular, each of the subgraphs $G_{R}$ and $G_{B}$ of $G$
is a $(2,2r)$-biregular bipartite graph with bipartition $(X,Y)$. 
Hence, by proceeding as in the proof of the preceding proposition,
we deduce that $G_{R}$ has a proper $2r$-edge coloring $\alpha$ such that for each $y\in Y$ 
$S(y,\alpha)=[1,2r]$, and for each $x\in X$ 
$S(x,\alpha)=\{2i-1,2i\}$ for some $i \in [1,r]$. Similarly, 
$G_{B}$ has a proper $2r$-edge coloring $\beta$ such
that for each $y\in Y$ 
$S(y,\beta)=[1,2r]$, and
for each $x\in X$ 
$S(x,\beta)=\{2j-1,2j\}$ for some $j \in [1,r]$. 
We define a new edge coloring
$\beta^{\prime}$ of $G_{B}$ from $\beta$ as follows: for every $e\in E(G_{B})$, let $\beta^{\prime}(e)=\beta(e)+2r$; then $\beta^{\prime}$
is a proper edge coloring of $G_{B}$ with colors $2r+1,2r+2,\ldots,4r$.
Moreover, for each $y\in Y$, 
$S(y,\beta^{\prime})=[2r+1,4r]$, and for each $x\in X$, 
$S(x,\beta^{\prime})=\{2(r+j)-1,2(r+j)\}$
for some $j \in [1,r]$. \\

Finally, we define an edge coloring $\gamma$ of $G$ as follows:

\begin{itemize}

	\item[1)] for every $e\in E(G_{R})$, let $\gamma(e)=\alpha(e)$;

	\item[2)] for every $e\in E(G_{B})$, let $\gamma(e)=\beta^{\prime}(e)$.

\end{itemize}

Clearly, $\gamma$ is a proper edge coloring of $G$ with colors
$1,2,\ldots,4r$ such that for each $y\in Y$, $S(y,\gamma)=[1,4r]$, and
for each $x\in X$, $S(x,\gamma)=\{2i-1,2i,2(r+j)-1,2(r+j)\}$ for some $i$ and $j$ ($i,j\in [1,r]$). This implies that $\check s(G)\leq r^{2}+1$. 
\end{proof}

Once again, we remark that the lower bound in part (i) of Proposition 
\ref{prop:bireg(4,4r)} is sharp by Corollary \ref{cor:Kab}, so
this also holds for parts (i)-(iii) of
the following consequence of Proposition 
\ref{prop:bireg(4,4r)}.

\begin{corollary}
\label{cor:4}
Let $G$ be a bipartite graph.
\begin{itemize}

	\item[(i)] If $G$ is $(4,8)$-biregular, 
	then $3\leq\check s(G) \leq 5$. 
	
	\item[(ii)] If $G$ is $(4,12)$-biregular,
	then $4 \leq\check s(G) \leq 10$.
	
	\item[(iii)] If $G$ is $(4,16)$-biregular,
	then $5 \leq\check s(G) \leq 17$.
	
	\item[(iv)] If $G$ is $(8,12)$-biregular,
	then $\check s(G) \leq 10$.
	
	\item[(v)] If $G$ is $(12, 16)$-biregular,
	then $\check s(G) \leq 17$.
	
	\end{itemize}
\end{corollary}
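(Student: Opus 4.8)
The plan is that Corollary \ref{cor:4} is a direct specialization of Proposition \ref{prop:bireg(4,4r)} together with Lemma \ref{lemma:lowbound}, so the entire argument amounts to substituting the appropriate value of $r$ in each of the five cases and checking a trivial identity. There is no substantive obstacle here; the only thing to watch is the bookkeeping matching each listed pair $(a,b)$ to one of the two families $(4,4r)$ or $(4r-4,4r)$.

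First, for the upper bounds I would observe the following. A $(4,8)$-biregular graph is $(4,4r)$-biregular with $r = 2$, a $(4,12)$-biregular graph is $(4,4r)$-biregular with $r = 3$, and a $(4,16)$-biregular graph is $(4,4r)$-biregular with $r = 4$; applying Proposition \ref{prop:bireg(4,4r)}(i) then yields $\check s(G) \leq r^2 + 1$, i.e.\ $5$, $10$, and $17$ respectively, giving the upper bounds in (i), (ii), (iii). For the remaining two cases, an $(8,12)$-biregular graph is $(4r-4,4r)$-biregular with $r = 3$ and a $(12,16)$-biregular graph is $(4r-4,4r)$-biregular with $r = 4$ (indeed $4\cdot 3-4 = 8$ and $4\cdot 4-4 = 12$, while $4\cdot 3 = 12$ and $4\cdot 4 = 16$); applying Proposition \ref{prop:bireg(4,4r)}(ii) gives $\check s(G) \leq r^2 + 1$, that is $10$ and $17$ respectively, which are the upper bounds in (iv) and (v).

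For the lower bounds in (i)--(iii) I would invoke Lemma \ref{lemma:lowbound}: in each of these three cases $G$ is $(a,b)$-biregular with $a = 4 < b$, so the lemma gives $\check s(G) \geq 1 + \lceil b/4 \rceil$, which equals $3$, $4$, and $5$ for $b = 8$, $12$, and $16$. (Equivalently, one may cite the lower bound already stated in Proposition \ref{prop:bireg(4,4r)}(i).) This settles all five statements. As already remarked before the corollary, these lower bounds in (i)--(iii) are in fact sharp: by Corollary \ref{cor:Kab} the complete bipartite graphs $K_{4,8}$, $K_{4,12}$, $K_{4,16}$ have palette indices $3$, $4$, $5$, so the extremal value of each lower bound is attained.
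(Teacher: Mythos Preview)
Your proposal is correct and matches the paper's approach exactly: the paper presents Corollary \ref{cor:4} simply as a ``consequence of Proposition \ref{prop:bireg(4,4r)}'' with no further argument, and your substitution of $r=2,3,4$ into the two families $(4,4r)$ and $(4r-4,4r)$, together with Lemma \ref{lemma:lowbound} for the lower bounds, is precisely the intended reading.
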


Our next result establishes an upper bound on the palette index
of $(5,5r)$-biregular graphs.

\begin{proposition}
\label{prop:bireg(5,5r)}
	If $G$ is a $(5,5r)$-biregular ($r\geq 2$) bipartite graph,
	then $r+1\leq\check s(G) \leq r^{3}+1$.
\end{proposition}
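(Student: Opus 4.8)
The plan is to get the lower bound immediately from Lemma~\ref{lemma:lowbound}, and to obtain the upper bound by peeling off a matching-like subgraph that reduces $G$ to a $(4,4r)$-biregular graph, which is already handled by Proposition~\ref{prop:bireg(4,4r)}. In effect we are iterating the biregular reduction: $5 = 1 + 2 + 2$, where each of the three blocks will contribute a factor $r$ to the palette count, giving $r^{3}$ palettes at the small-degree side.

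For the lower bound, since $r \geq 2$ we have $5 < 5r$, so Lemma~\ref{lemma:lowbound} gives $\check s(G) \geq 1 + \lceil 5r/5 \rceil = r+1$.

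For the upper bound, let $G$ be $(5,5r)$-biregular with bipartition $(X,Y)$, every vertex of $X$ of degree $5$ and every vertex of $Y$ of degree $5r$. First I would mimic the construction in the proof of Proposition~\ref{prop:bireg(3,3r)}: form a new graph $H$ by replacing each $y \in Y$ with $r$ vertices $y^{(1)},\dots,y^{(r)}$ of degree $5$, each joined to five neighbors of $y$ in $G$, with pairwise disjoint neighborhoods. Then $H$ is a $5$-regular bipartite graph, and so by Hall's matching theorem it has a perfect matching $M$. In $G$, the edge set of $M$ induces a subgraph $F$ in which every $x \in X$ has degree $1$ and every $y \in Y$ has degree $r$; hence $G' := G - E(F)$ is $(4,4r)$-biregular. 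Applying the proof of Proposition~\ref{prop:bireg(4,4r)}(i) to $G'$, we obtain a proper $4r$-edge coloring $\alpha$ of $G'$ with $S(y,\alpha) = [1,4r]$ for every $y \in Y$ and $S(x,\alpha) = \{2i-1,2i,2(r+j)-1,2(r+j)\}$ for some $i,j \in [1,r]$ for every $x \in X$. Now color $F$: for each $y \in Y$ separately, color the $r$ edges of $F$ at $y$ bijectively with the colors $4r+1,\dots,5r$; since every $x \in X$ has degree $1$ in $F$, this is a proper edge coloring $\beta$ of $F$. Taking $\alpha$ and $\beta$ together gives a proper edge coloring $\gamma$ of $G$ with colors $1,\dots,5r$ in which $S(y,\gamma) = [1,5r]$ for all $y \in Y$, while $S(x,\gamma) = \{2i-1,2i,2(r+j)-1,2(r+j),4r+k\}$ for some $i,j,k \in [1,r]$ for all $x \in X$. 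There are at most $r^{3}$ such triples $(i,j,k)$, so at most $r^{3}$ palettes occur at vertices of $X$ and exactly one at vertices of $Y$, whence $\check s(G) \leq r^{3}+1$.

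The only ingredient that is not completely routine is the existence of the structured coloring $\alpha$ of the $(4,4r)$-biregular graph $G'$ — i.e.\ that the palette of each $X$-vertex is a union of two ``consecutive pairs'' from two disjoint colour blocks — but this is precisely what the proof of Proposition~\ref{prop:bireg(4,4r)} delivers (via an Eulerian trail splitting $G'$ into two $(2,2r)$-biregular pieces, each coloured with its own block of $2r$ colours). Everything else — the vertex-splitting, the perfect matching in a regular bipartite graph, the properness of $\beta$, and the final palette count — is immediate once those structured colourings are available, so I do not expect any serious obstacle beyond carefully tracking the colour blocks $\{1,\dots,2r\}$, $\{2r+1,\dots,4r\}$ and $\{4r+1,\dots,5r\}$.
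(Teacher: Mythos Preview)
Your proof is correct and follows essentially the same approach as the paper: both use the vertex-splitting plus Hall's theorem to strip off a $(1,r)$-biregular subgraph $F$, reduce to the $(4,4r)$-biregular case handled by Proposition~\ref{prop:bireg(4,4r)}, and then colour $F$ with the extra block $\{4r+1,\dots,5r\}$ to obtain the $r^{3}+1$ bound. The structure, the intermediate lemmas invoked, and the final palette count all match.
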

\begin{proof}
The lower bound follows
from Lemma \ref{lemma:lowbound}, so let us prove the upper bound.

Let $G$ be a $(5,5r)$-biregular bipartite graph with bipartition $(X,Y)$,
and let us show that $\check s(G) \leq r^{3}+1$.
As in the proof of Proposition \ref{prop:bireg(3,3r)}, we define a new graph $H$ from $G$ by replacing each vertex $y\in Y$ by 
$r$ vertices $y^{(1)},y^{(2)},\ldots,y^{(r)}$ of degree $5$, where 
each $y^{(i)}$ is adjacent to five neighbors of $y$ in $G$, 
and  $y^{(i)}$ and $y^{(j)}$ have disjoint neighborhoods if $i \neq j$.
Clearly, $H$ is a $5$-regular bipartite graph, and by Hall's 
matching theorem, $H$ contains a perfect matching $M$. 

In the graph $G$, $M$ induces a subgraph $F$ in which each vertex $y\in Y$ has degree $r$ and each vertex $x\in X$ has degree $1$. 
Let us consider the graph $G^{\prime}=G-E(F)$. Since $G^{\prime}$ 
is $(4,4r)$-biregular, by proceeding as in the proof of 
Proposition \ref{prop:bireg(4,4r)}, we can construct
a proper $4r$-edge coloring $\alpha$ of
 $G^{\prime}$  such that for each $y\in Y$,
$S(y,\alpha)=[1,4r]$ and for each $x\in X$, 
$S(x,\alpha)=\{2i-1,2i,2(r+j)-1,2(r+j)\}$ for some
$i,j\in [1,r]$. Let us now define an edge-coloring $\beta$ of $F$ as follows: 
for each vertex $y\in Y$, we color the edges of $F$ incident with $y$ with colors $4r+1,4r+2,\ldots,5r$.\\

Finally, we define an edge coloring $\gamma$ of $G$ as follows:

\begin{itemize}

	\item[1)] for every $e\in E(G^{\prime})$, let $\gamma(e)=\alpha(e)$;

	\item[2)] for every $e\in E(F)$, let $\gamma(e)=\beta(e)$.

\end{itemize}

Clearly, $\gamma$ is a proper edge coloring of $G$ with colors
$1,2,\ldots,5r$ such that for each $y\in Y$, $S(y,\gamma)=[1,5r]$, and
for each $x\in X$, $$S(x,\gamma)=\{2i-1,2i,2(r+j)-1,2(r+j),4r+k\}$$ 
for some $i,j,k\in [1,r]$. 
This implies that $\check s(G)\leq r^{3}+1$. 
\end{proof}

We remark that it is possible to prove a similar upper bound for
$(5r-5,5r)$-biregular graphs. From the preceding proposition we
deduce the following.

\begin{corollary}
		If $G$ is a $(5,10)$-biregular graph,
	then $3 \leq\check s(G) \leq 9$.
\end{corollary}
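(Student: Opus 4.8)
The plan is to derive this corollary directly from Proposition \ref{prop:bireg(5,5r)} by setting $r=2$. Indeed, a $(5,10)$-biregular graph is exactly a $(5,5r)$-biregular graph with $r=2$, so the proposition immediately gives $2+1 \leq \check s(G) \leq 2^{3}+1$, that is, $3 \leq \check s(G) \leq 9$. No further argument is needed; the corollary is a pure specialization.

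First I would note that the lower bound $3 \leq \check s(G)$ also follows independently from Lemma \ref{lemma:lowbound}, since $G$ is $(a,b)$-biregular with $a=5 < b=10$, giving $\check s(G) \geq 1 + \lceil 10/5 \rceil = 3$. So the only substantive content is the upper bound $\check s(G) \leq 9$, which is precisely the $r=2$ instance of the construction in the proof of Proposition \ref{prop:bireg(5,5r)}: one pulls off a subgraph $F$ (coming from a perfect matching of the blown-up $5$-regular bipartite graph $H$) in which every vertex of $X$ has degree $1$, colors the remaining $(4,8)$-biregular graph $G' = G - E(F)$ using the four-color-block coloring $\gamma$ on $Y$-palettes from Proposition \ref{prop:bireg(4,4r)}, and then colors $F$ with the $r=2$ fresh colors $\{9,10\}$. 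Each $x \in X$ then acquires a palette of the form $\{2i-1,2i,2(2+j)-1,2(2+j),8+k\}$ with $i,j,k \in \{1,2\}$, giving at most $2 \cdot 2 \cdot 2 = 8$ distinct $X$-palettes, plus the single common $Y$-palette $[1,10]$, for a total of at most $9$.

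Since the corollary is an immediate consequence, there is essentially no obstacle: the only thing to be careful about is simply plugging $r=2$ into the bound $r^{3}+1$ correctly (obtaining $9$, not, say, $5$ or $7$ as in the $(3,3r)$ and $(4,4r)$ cases, because the $(5,5r)$ bound is genuinely weaker). For completeness I would write the proof as follows.

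\begin{proof}
Apply Proposition \ref{prop:bireg(5,5r)} with $r=2$.
\end{proof}
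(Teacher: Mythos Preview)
Your proposal is correct and matches the paper's approach exactly: the corollary is stated immediately after Proposition~\ref{prop:bireg(5,5r)} as a direct consequence, and specializing to $r=2$ gives $r+1=3$ and $r^{3}+1=9$. Your one-line proof ``Apply Proposition~\ref{prop:bireg(5,5r)} with $r=2$'' is precisely what is intended.
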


Again, by Corollary \ref{cor:Kab}, the lower bound 
in the preceding corollary (and in Proposition \ref{prop:bireg(5,5r)}) 
is sharp. This also applies to the next proposition
which concerns $(r,2r)$-biregular graphs. 

\begin{proposition}
\label{prop:bireg(r,2r)}
	If $G$ is an $(r,2r)$-biregular ($r\geq 2$) bipartite graph, then $3\leq\check s(G) \leq 2^{\lceil \frac{r}{2} \rceil}+1$.
\end{proposition}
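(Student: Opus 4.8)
The plan is to pair the lower bound from Lemma~\ref{lemma:lowbound} with an explicit colouring for the upper bound. Since $G$ is $(r,2r)$-biregular and $r<2r$, Lemma~\ref{lemma:lowbound} gives at once $\check s(G)\geq 1+\lceil 2r/r\rceil=3$, so all the work lies in proving $\check s(G)\leq 2^{\lceil r/2\rceil}+1$. Write $(X,Y)$ for the bipartition with $\deg(x)=r$ for $x\in X$ and $\deg(y)=2r$ for $y\in Y$; note $|X|=2|Y|$.

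First I would build an auxiliary $r$-regular bipartite graph $H$ by splitting each $y\in Y$ into two vertices $y_1,y_2$ and distributing the $2r$ edges at $y$ into two groups of $r$, one group at $y_1$ and one at $y_2$. By K\"onig's edge colouring theorem $H$ decomposes into $r$ perfect matchings $M_1,\dots,M_r$. Pulling these back to $G$, each $M_i$ becomes a subgraph $F_i$ in which every $x\in X$ has degree $1$ (it is matched exactly once in $M_i$) and every $y\in Y$ has degree $2$ (it is matched exactly once at $y_1$ and once at $y_2$, and these are two distinct edges of $G$). Hence $F_1,\dots,F_r$ is an edge-decomposition of $G$ into $(1,2)$-biregular subgraphs, where moreover each $F_i$ covers \emph{all} of $X$.

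Next I would group the $F_i$ into $t:=\lceil r/2\rceil$ blocks: set $B_i=F_{2i-1}\cup F_{2i}$ as long as $2i\leq r$, and, when $r$ is odd, take a last block $B_t=F_r$. Each full block $B_i$ is $(2,4)$-biregular, so by (the proof of) Theorem~\ref{prop:bipevendeg}, via Petersen's theorem (Theorem~\ref{th:Petersen}), it admits a proper $4$-edge-colouring in which every vertex of $Y$ sees all four colours while every vertex of $X$ sees one of exactly two fixed $2$-element palettes; a leftover block $B_t=F_r$ is a disjoint union of paths $P_3$ centred at vertices of $Y$, which I would properly $2$-colour so that each $Y$-vertex sees both colours and each $X$-vertex one of them. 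Colouring the $i$-th block with its own block of four (resp.\ two) consecutive new colours, these colourings fit together into a proper edge colouring of $G$. The colour count is $4\lfloor r/2\rfloor+2[\,r\text{ odd}\,]=2r$, so exactly $2r$ colours are used and hence every $y\in Y$ has palette $[1,2r]$; meanwhile every $x\in X$ makes one of two independent choices in each of the $t$ blocks, giving at most $2^{\lceil r/2\rceil}$ palettes among the vertices of $X$. Therefore $\check s(G)\leq 2^{\lceil r/2\rceil}+1$, which together with the lower bound completes the proof.

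The point that needs care is the pull-back step: one must check that the perfect matchings of $H$ really do yield $(1,2)$-biregular factors of $G$ — in particular that each $y\in Y$ acquires degree exactly $2$ in $F_i$ (the two matching edges at $y_1$ and $y_2$ cannot coincide in $G$), and that each factor covers all of $X$, which is what makes the paired blocks $(2,4)$-biregular rather than merely bounded-degree. The per-block colouring and the arithmetic $4\lfloor r/2\rfloor+2[\,r\text{ odd}\,]=2r$ are then routine; the former is a direct application of Theorem~\ref{prop:bipevendeg} to a $(2,4)$-biregular graph.
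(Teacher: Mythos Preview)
Your proof is correct and follows essentially the same strategy as the paper: use K\"onig's theorem to decompose $G$ into structured biregular factors, colour each $(2,4)$-biregular piece via the $2$-factor colouring of Theorem~\ref{prop:bipevendeg} so that $X$-vertices get one of two palettes, and handle the leftover $(1,2)$-biregular piece with two extra colours when $r$ is odd. The only difference is organizational: the paper treats the even and odd cases separately (directly producing $k$ $(2,4)$-biregular factors when $r=2k$, and peeling off one $(1,2)$-biregular factor first when $r=2k+1$), whereas you give a unified construction by first decomposing into $r$ $(1,2)$-biregular factors and then pairing them---a slightly cleaner packaging of the same idea.
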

\begin{proof}
As in the proofs of the preceding propositions, the lower bound follows
from Lemma \ref{lemma:lowbound}.
Let $G$ be an $(r,2r)$-biregular bipartite graph with bipartition $(X,Y)$,
and let us show that $\check s(G) \leq 2^{\lceil \frac{r}{2} \rceil}+1$.
We consider two cases.

\bigskip

\noindent
{\bf Case 1.} {\it $r$ is even:}
Let $r=2k$ ($k\in \mathbb{N}$).  
	Since $G$ is $(2k,4k)$-biregular, it has a decomposition into $k$
	$(2,4)$-biregular graphs $G_{1},\ldots,G_{k}$; this follows by splitting
	vertices of degree $2k$ into two vertices of degree $k$, vertices of degree $4k$ into four vertices of degree $k$, and taking perfect matchings in the resulting $k$-regular bipartite graph. As in the proof of Theorem 
	\ref{prop:bipevendeg} it can be shown that each graph $G_{i}$ has a proper 
$4$-edge 
coloring $\alpha_{i}$ such that for each $y\in Y$,
$S\left(y,\alpha_{i}\right)=[4i-3,4i]$, and for each $x\in X$, either
$S\left(x,\alpha_{i}\right)=\{4i-3,4i-2\}$ or $S\left(x,\alpha_{i}\right)=\{4i-1,4i\}$ ($1\leq i\leq k$). Let us now define an edge-coloring $\beta$ of $G$ as follows: for $1\leq i\leq k$ and for every $e\in E(G_{i})$, 
let $\beta(e)=\alpha_{i}(e)$.

Clearly, $\beta$ is a proper edge coloring of $G$ with colors
$1,2,\ldots,4k$ such that for each $y\in Y$, $S(y,\beta)=[1,4k]$, and
for each $x\in X$, $S(x,\beta)$ is one of $2^{k}$ possible palettes.
This implies that $\check s(G)\leq 2^{k}+1$.

\bigskip

\noindent
{\bf Case 2.} {\it $r$ is odd:}
Let $r=2k+1$ ($k\in \mathbb{N}$). 
	Since $G$ is $(2k+1,4k+2)$-biregular, 
	it has a $(1,2)$-biregular subgraph $F$; this follows by 
	splitting vertices of degree $4k+2$ into two vertices of degree $2k+1$,
	and taking a perfect matching in the resulting $(2k+1)$-regular bipartite graph. Let us consider the graph $G^{\prime}=G-E(F)$. Since $G^{\prime}$ is 
	a $(2k,4k)$-biregular graph, it follows from the proof in Case 1 that $G^{\prime}$ has a proper $4k$-edge coloring $\alpha$ such that for each $y\in Y$,
$S(y,\alpha)=[1,4k]$ and for each $x\in X$, 
$S(x,\alpha)$ is one of $2^{k}$ possible palettes. Let us now define an 
edge coloring $\beta$ of $F$ as follows: for each vertex $y\in Y$, we color the edges of $F$ incident with $y$ with colors $4k+1$ and $4k+2$.\\
	
Finally, we define an edge coloring $\gamma$ of $G$ as follows:

\begin{itemize}

	\item[1)] for every $e\in E(G^{\prime})$, let $\gamma(e)=\alpha(e)$;

	\item[2)] for every $e\in E(F)$, let $\gamma(e)=\beta(e)$.

\end{itemize}

Clearly, $\gamma$ is a proper edge coloring of $G$ with colors
$1,2,\ldots,4k+2$ such that for each $y\in Y$, $S(y,\gamma)=[1,4k+2]$, and
for each $x\in X$,
\begin{center}
either $S(x,\gamma)=S(x,\alpha)\cup \{4k+1\}$ or $S(x,\gamma)=S(x,\alpha)\cup \{4k+2\}$;
\end{center}
thus there are at most $2^{k+1}$ possible choices for the palette $S(x,\gamma)$.
This implies that $\check s(G)\leq 2^{k+1}+1$. 
\end{proof}

\begin{corollary}
\label{cor:bireg()}
	Let $G$ be a bipartite graph.
	\begin{itemize}

	\item[(i)] If $G$ is $(6,12)$-biregular,
	then $3 \leq\check s(G) \leq 9$.
	
	\item[(ii)] If $G$ is a $(8,16)$-biregular,
	then $3 \leq\check s(G) \leq 17$.
		\end{itemize}
\end{corollary}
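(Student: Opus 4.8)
The plan is to obtain both statements directly as specializations of Proposition \ref{prop:bireg(r,2r)}. The first observation is that a $(6,12)$-biregular graph is exactly an $(r,2r)$-biregular graph with $r=6$, and an $(8,16)$-biregular graph is an $(r,2r)$-biregular graph with $r=8$; in both cases $r\geq 2$, so the proposition applies verbatim. It then remains only to evaluate the bounds. For part (i), the upper bound $2^{\lceil r/2\rceil}+1$ becomes $2^{3}+1=9$, and the lower bound supplied by Lemma \ref{lemma:lowbound} is $1+\lceil 12/6\rceil=1+2=3$, giving $3\leq\check s(G)\leq 9$. For part (ii), the upper bound is $2^{\lceil 8/2\rceil}+1=2^{4}+1=17$ and the lower bound is $1+\lceil 16/8\rceil=1+2=3$, giving $3\leq\check s(G)\leq 17$.

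Concretely, since $r=6$ and $r=8$ are both even, it is Case~1 of the proof of Proposition \ref{prop:bireg(r,2r)} that is invoked: for a $(6,12)$-biregular graph one writes $r=2k$ with $k=3$, decomposes $G$ into three edge-disjoint $(2,4)$-biregular subgraphs $G_1,G_2,G_3$, colors each $G_i$ with a palette-controlled proper $4$-edge coloring on the color block $[4i-3,4i]$, and superimposes these; the $X$-vertices then receive one of $2^{3}=8$ palettes, so $\check s(G)\leq 8+1=9$. For an $(8,16)$-biregular graph one takes $k=4$, decomposes into four $(2,4)$-biregular subgraphs, and the same argument yields at most $2^{4}=16$ palettes at the $X$-vertices, hence $\check s(G)\leq 17$.

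I do not expect any genuine obstacle in this step: the entire conceptual content — the vertex-splitting decomposition into $(2,4)$-biregular pieces and the Eulerian/Petersen-style two-coloring used to control the palettes in Theorem \ref{prop:bipevendeg} and Proposition \ref{prop:bireg(r,2r)} — has already been established. The only thing to check is the elementary arithmetic of the ceiling function for $r=6$ and $r=8$, and the matching of the lower bound from Lemma \ref{lemma:lowbound}, both of which are immediate. Thus the corollary follows without further work.
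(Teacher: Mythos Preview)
Your proposal is correct and matches the paper's approach exactly: the corollary is stated as an immediate consequence of Proposition~\ref{prop:bireg(r,2r)}, obtained by substituting $r=6$ and $r=8$ (both even, so Case~1 applies) and evaluating $2^{\lceil r/2\rceil}+1$ together with the lower bound $3$ from Lemma~\ref{lemma:lowbound}. Your additional unpacking of the decomposition argument is accurate but not needed, since the proposition already encapsulates it.
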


Our final result for biregular graphs shows that a slightly weaker
form of Conjecture \ref{conj:palettebiregular} holds
for $(3,5)$-biregular graphs.

\begin{proposition}
\label{prop:bireg(3,5)}
	If $G$ is a $(3,5)$-biregular bipartite graph, then $5\leq\check s(G) \leq 7$. 
\end{proposition}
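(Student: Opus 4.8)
The lower bound $\check s(G)\geq 5$ is immediate from Lemma \ref{lemma:lowbound}, since $1+\lceil 5/3\rceil = 1+2 = 3$ — wait, that only gives $3$, so the lower bound $5$ must come from a more careful counting argument. The plan is to let $(X,Y)$ be the bipartition with every vertex of $X$ having degree $5$ and every vertex of $Y$ having degree $3$, so $5|X| = 3|Y|$. In any proper edge coloring, a palette at a degree-$3$ vertex has size $3$, and such a palette can be realized at at most $|X|$ vertices of $Y$; hence the number of distinct size-$3$ palettes is at least $\lceil |Y|/|X|\rceil = \lceil 5/3\rceil = 2$. This alone is not enough, so I would push further: argue that in fact the degree-$5$ vertices cannot all share a single palette. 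Indeed, if all $|X|$ vertices of $X$ had the common palette $\{1,2,3,4,5\}$, then each color class $M_i$ ($i=1,\dots,5$) is a perfect matching on $X$, forcing each $M_i$ to saturate $Y$ as well; but $|Y| = 5|X|/3$ is generally not equal to $|X|$, and more importantly each $y\in Y$ would need exactly $3$ of the $5$ matchings through it, which forces $\sum_{y}3 = 5|X|$, i.e.\ $3|Y| = 5|X|$ — consistent, so this needs a parity or counting refinement (e.g.\ counting edges colored $i$ joining $X$ to $Y$ and using that $M_i$ is a perfect matching on $X$ of size $|X|$, so color $i$ appears at exactly $|X|$ of the $|Y| = 5|X|/3$ vertices of $Y$; then summing, each $y$ sees $3$ colors, giving $3|Y| = 5|X|$, consistent). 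The genuine obstruction giving the bound $5$ is more delicate and is likely proved by a case analysis on how few palettes can occur: suppose $\check s(G)\leq 4$; then at most $4-1 = 3$ palettes besides possibly one at degree-$5$ vertices, and one shows (as in the grid proof, Theorem \ref{mytheorem14}) that too few palettes forces some color class to be a perfect matching or an almost-perfect matching of $G$, contradicting the degree sequence via a parity count on $|X|$ and $|Y|$.

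For the upper bound $\check s(G)\leq 7$, the plan mirrors the proof of Proposition \ref{prop:bireg(3,3r)} and Proposition \ref{prop:bireg(5,5r)}. First I would split each degree-$5$ vertex $x\in X$ — no: rather, split each degree-$5$ vertex of $Y$? Let me instead proceed as follows: since $G$ is $(3,5)$-biregular, form the auxiliary graph $H$ by splitting each degree-$5$ vertex $x$ of $X$? No — $X$ has degree $5$ and $Y$ degree $3$. Split each $x\in X$ into a degree-$3$ vertex and a degree-$2$ vertex (distributing the $5$ incident edges as $3+2$), obtaining a bipartite graph in which the $X$-side has degrees in $\{2,3\}$ and the $Y$-side has degree $3$. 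Better: extract a $(1,?)$-subgraph. The cleanest route: since every vertex of $X$ has degree $5$ and every vertex of $Y$ has degree $3$, I would find a subgraph $F$ of $G$ in which every $x\in X$ has degree $1$ and every $y\in Y$ has degree (at most) $1$ — i.e.\ a matching saturating $X$ — no, since $5|X| = 3|Y|$ forces $|X| > |Y|$, so no matching saturates $X$. Instead, find $F$ with each $x\in X$ of degree $1$ and each $y\in Y$ of degree at most... this is impossible too by counting. The right decomposition is: split each $y\in Y$ (degree $3$) — there is nothing to split into equal parts. The workable approach: by König's theorem decompose $E(G)$ into $5$ matchings $M_1,\dots,M_5$ (since $\chi'(G) = \Delta(G) = 5$ for bipartite $G$); each $M_i$ saturates $X$. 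Then color $M_5$ with color $c$, and on $G - M_5$, which is $(4,?)$-biregular on the $X$-side and has $Y$-degrees in $\{2,3\}$, apply an Eulerian-type $2$-factorization argument as in Theorem \ref{prop:bipevendeg} after symmetrizing. The palette at $x\in X$ becomes $\{$two colors from the first pair, two from the second pair, $c\}$ giving at most $2\cdot 2 = 4$ choices, plus the palette at degree-$3$ vertices of $Y$: there at most $3$ palettes arise, total at most $7$.

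The hard part will be the lower bound $\check s(G)\geq 5$: the naive Lemma \ref{lemma:lowbound} count only delivers $3$, so I would need to combine a parity argument on $(m-2)(n-2)$-style counts (here: $|X|$, $|Y|$ and the sizes of color classes) together with a structural case analysis showing that a coloring with $4$ palettes forces a color class to be a perfect matching of $G$ — which is impossible since $|X|\neq|Y|$ in a $(3,5)$-biregular graph — or forces all degree-$3$ vertices to share a palette, contradicting $\lceil 5/3\rceil = 2$. Concretely, if $\check s(G) = 4$ with palettes $P_1$ (at degree-$5$ vertices, size $5$) and $Q_1, Q_2, Q_3$ (at degree-$3$ vertices, size $3$), then since only $2$ of the $Q_j$ can coexist only if... — one shows at most two distinct degree-$3$ palettes is forced, and then a counting argument on how the five colors of $P_1$ distribute among $Q_1\cup Q_2$ (each color of $P_1$ must appear in some $Q_j$, and each $Q_j$ has size $3$, so $|Q_1\cup Q_2|\leq 6$ must contain all $5$ colors of $P_1$ plus possibly new ones), combined with the parity of $|X| - |Y|$, yields the contradiction. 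I expect this to be the most technical part, handled by a short sequence of claims analogous to those in the proof of Theorem \ref{mytheorem14}.
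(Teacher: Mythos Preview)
Your upper-bound framework is essentially the paper's: remove a matching $M$ saturating the degree-$5$ side (this is exactly what picking one of the five K\"onig matchings does), colour $G-M$ with four colours via the Eulerian/symmetrising argument behind Corollary~\ref{cor:bipdeg4}, then give $M$ a fifth colour. However, your palette count is garbled. In $G-M$ the degree-$5$ vertices become degree-$4$ vertices, and under the Theorem~\ref{prop:bipevendeg} colouring they \emph{all} receive the single palette $\{1,2,3,4\}$; after restoring $M$ they all receive $\{1,2,3,4,5\}$ --- one palette, not ``$2\cdot 2=4$ choices''. On the degree-$3$ side, the vertices covered by $M$ become degree-$2$ in $G-M$ (two possible palettes $\{1,2\}$, $\{3,4\}$, hence $\{1,2,5\}$ or $\{3,4,5\}$ in $G$), while those not covered by $M$ stay degree-$3$ (four possible palettes). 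So the correct tally is $1+2+4=7$, not $4+3$. You landed on the right number, but for the wrong reasons; as written, the argument does not establish the bound.

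The lower bound is the real gap. Lemma~\ref{lemma:lowbound} gives only $3$, and you acknowledge this, but you never actually rule out $\check s(G)\in\{3,4\}$. The paper does this by an explicit case analysis. With bipartition $(X,Y)$, $d_X=3$, $d_Y=5$, $|X|=5k$, $|Y|=3k$: if $\check s(G)=3$ then all degree-$5$ vertices share a palette $\{1,\dots,5\}$, the two size-$3$ palettes overlap in some colour $c$, and then $M_c$ would be a perfect matching of a graph with $|X|\neq|Y|$. If $\check s(G)=4$, one first shows there must be at least two palettes on each side (if the degree-$5$ side had a unique palette $\{1,\dots,5\}$, a short pigeonhole on the three size-$3$ palettes forces one colour, say $1$, to lie in exactly one of them, and then counting edges of colours $4,5$ on the two sides gives $3k$ versus $2k$); hence exactly two palettes on each side, and then a further colour-class count (either the two size-$3$ palettes overlap, forcing only five colours and a single degree-$5$ palette; or they are disjoint, say $\{1,2,3\}$ and $\{4,5,6\}$, and then some colour from each triple appears at all degree-$5$ vertices, forcing $|X|=3k+3k=6k\neq 5k$) yields a contradiction. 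Your proposal gestures at ``parity'' and ``perfect matching'' obstructions but never pins down this case split; without it the lower bound is unproved.
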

\begin{proof}
Let $G$ be a $(3,5)$-biregular bipartite graph with parts $X$
and $Y$. Since $G$ is $(3,5)$-biregular, we have that $|X| = 5k$
and $|Y|=3k$ for some positive integer $k$.

By Lemma \ref{lemma:lowbound}, we obtain that $\check s(G)\geq 3$. Moreover, if $\check s(G) = 3$, then in
a proper edge coloring attaining this value, vertices in $X$ in $G$
must have two distinct palettes. If $\varphi$ is such a coloring, then
the vertices
of degree five all have the same palette under $\varphi$. This implies
that $\varphi$ is a proper $5$-edge coloring, and so there is some color
appearing at all vertices of degree three in $G$. However, this contradicts
that $\varphi$ is a proper $5$-edge coloring. Hence,
$\check s(G)\geq 4$.

Now assume that $\check s(G) = 4$. Using similar counting arguments as before,
it follows that vertices in $X$ must have at least two distinct palettes.
Vertices in $Y$ must also have at least two distinct palettes, because
suppose there is only one palette $\{1,2,3,4,5\}$ of size $5$ and three palettes of size $3$; then, since there are three palettes 
of size $3$ and no color can appear
in all these three palettes, there is 
exactly one color,
say $1$, that appears in exactly one palette of size $3$, say $\{1,2,3\}$; 
the remaining palettes of size three are then $\{2,4,5\}$ and $\{3,4,5\}$.
Now, since $|X| = 5k$ and $|Y|=3k$, we have that the number of vertices
in $X$ with the palette $\{1,2,3\}$ is $3k$. But then colors $4$ and $5$
appear at all $3k$ vertices of $Y$ but only at $2k$ vertices in $X$,
a contradiction.
Hence, the vertices in $Y$ have at least two distinct palettes, and so,
there are exactly two palettes of vertices in $X$ and two palettes of vertices
in $Y$.

Now, if the two distinct palettes
of vertices in $X$ are not disjoint, then at most $5$ colors are used in a proper
edge coloring of $G$ with a minimum number of palettes, which contradicts that
two distinct palettes appear at vertices in $Y$. Thus 
there is a proper edge coloring $\varphi$ with $4$ distinct palettes,
and where the two palettes
of vertices in $X$ are disjoint, say $\{1,2,3\}$ and $\{4,5,6\}$. Now, since
exactly $6$ colors are used in $\varphi$, and since only two distinct palettes
appear at vertices of $Y$, some color appears at all vertices
of $Y$, say color $1$. This implies that the number of vertices in $X$
with the palette $\{1,2,3\}$ is $|Y| =3k$. However, some color in $\{4,5,6\}$
must also appear at all vertices in $Y$, which implies that the 
number of vertices in $X$ with the palette $\{4,5,6\}$ is $3k$, a contradiction
because $|X| = 5k$. Hence $\check s(G)\geq 5$.

 Let us now show that $\check s(G) \leq 7$.
 By Hall's matching theorem, $G$ has a matching $M$ that saturates all the vertices 
of degree $5$. 
The graph $G^{\prime} = G-M$ is a bipartite graph with $\Delta(G^{\prime})=4$. As in the proof of Corollary \ref{cor:bipdeg4}, $G^{\prime}$ has a proper edge coloring $\alpha$ with colors $1,2,3,4$ such that the vertices of degree $2$ 
in $G'$ have $2$ possible palettes and the vertices of degree $3$ in $G'$
have $4$ possible palettes.\\ 

We now define a proper edge coloring $\beta$ of $G$ as follows: 

\begin{itemize}
		
		\item[1)] for every $e\in E(G^{\prime})$, let $\beta(e)=\alpha(e)$;
	
		\item[2)] for every $e\in M$, let $\beta(e)=5$.
	
\end{itemize}
In the coloring $\beta$ the vertices of degree $5$ in $G$
all have the same palette, the vertices of degree $3$ in $G$
that are covered by $M$ have again $2$ possible palettes and the rest of the vertices of degree $3$ in $G$ have $4$ possible palettes. 
This implies that $\check s(G) \leq 7$. 
\end{proof}

We remark that the lower bound in the preceding proposition is sharp
since $\check s(K_{3,5})= 5$, as proved by \cite{HornakHudak}.\\


\section{Graphs with large palette index}

For every graph $G$ we clearly have $\check s(G) \leq |V(G)|$.
In this section we shall characterize the graphs $G$ with largest possible
palette index in the sense that
$G$ satisfies $\check s(G)=|V(G)|$. Throughout this section we only
consider graphs with no multiple edges.

Denote by $\hat K^{j}_3$ the graph obtained from $K_3$ and $K_{1,j}$ by identifying the
central vertex of $K_{1,j}$ with a vertex of $K_3$.
Moreover, we denote by $\hat K^{j+}_3$ the graph obtained from
$K_3$ and $K_{1,j}$ by adding an edge between the central vertex
of $K_{1,j}$ and some vertex of $K_3$.

\begin{theorem}
\label{th:palettelarge}
	If $G$ is a graph with no isolated vertices, then
	$\check s(G) = |V(G)|$ if and only if $G$ is 
	isomorphic to $K_3$, $K_{1,j}$ with $j \geq 2$,
	$\hat K^{j}_3$ with $j \geq 1$, 
	or one of
	$\hat K^{j+}_3$ and $K_3 \cup K_{1,j}$ with $j \geq 3$.
\end{theorem}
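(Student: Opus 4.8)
The statement is an "if and only if," so I would split it into the (easy) sufficiency direction and the (harder) necessity direction. For sufficiency, I would simply check, for each of the listed graphs $G$, that every proper edge coloring uses $|V(G)|$ distinct palettes. These are all small sparse graphs: $K_3$ is regular and Class~2, so $\check s(K_3)=3=|V(K_3)|$; for $K_{1,j}$ the $j$ leaves all receive distinct colors, hence $j$ singleton palettes, plus the center's palette of size $j$, giving $j+1=|V(K_{1,j})|$; and for $\hat K_3^j$, $\hat K_3^{j+}$, and $K_3\cup K_{1,j}$ one argues that the leaves still force $j$ pairwise distinct singleton palettes and then checks by a short degree/incidence count that no two of the remaining (triangle and attachment) vertices can share a palette with each other or with a leaf. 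I would organize this as a brief case analysis, using that in $K_3$ the three edges get three distinct colors, none of which can be reused at an adjacent leaf or pendant.

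**The necessity direction — the heart of the proof.** Suppose $\check s(G)=|V(G)|$; I want to show $G$ is one of the listed graphs. The key structural observation is that $\check s(G)=|V(G)|$ is an extremely restrictive "rigidity" condition: in \emph{every} proper edge coloring, all $|V(G)|$ palettes are distinct. I would first record easy necessary conditions. (a) $G$ must be connected or have at most two components, and in fact a disconnected example must be $K_3\cup K_{1,j}$ — because if $G$ has two components $G_1,G_2$ each with $\ge 2$ vertices and neither is forced to have all-distinct palettes across the union, we can recolor one component with an entirely disjoint color set, so we need $\check s(G_i)=|V(G_i)|$ for each component \emph{and} the palette sets to be unavoidably disjoint; a short argument shows the only way is $K_3$ together with a star. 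So reduce to $G$ connected. (b) $G$ has at most a bounded number of vertices of degree $\ge 2$: if $G$ had, say, three independent edges or a long path, one could 2-color or Vizing-color to merge two palettes, contradicting rigidity. More precisely, I would argue that the subgraph induced by vertices of degree $\ge 2$ is very small — essentially it must be contained in a single triangle, or $G$ is a star. The mechanism is: whenever $G$ has "room" (two disjoint edges both incident only to degree-$\ge 2$ vertices, or an even cycle, or a sufficiently long path), one can build a proper edge coloring with a repeated palette, e.g. by Vizing/König coloring a well-chosen subgraph and reusing colors.

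**Carrying out the reduction.** Concretely, for connected $G$: if $\Delta(G)$-vertices and the structure around them allow two vertices to get the same palette, we win; so I would prove that the set $W$ of vertices of degree $\ge 2$ satisfies $|W|\le 3$ and $G[W]$ is either empty (so $G=K_{1,j}$, $j\ge 2$), a single edge, a path on $3$ vertices, or a triangle. In the triangle case the pendant edges hanging off the triangle vertices are further constrained: all pendants at one vertex are fine (they give distinct singletons), but one shows there cannot be pendants at two different triangle vertices (one could swap colors between two such pendant edges to merge palettes unless blocked — a case check), leading exactly to $\hat K_3^j$ (triangle $+$ star attached at a vertex); the $\hat K_3^{j+}$ case arises when the star's center is a \emph{fourth} vertex joined to one triangle vertex, and again one checks the rigidity holds only in that configuration; a path/single-edge core for $G[W]$ with pendants collapses back into a star or is shown to fail rigidity. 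I expect the main obstacle to be making the "we can always merge two palettes unless $G$ is tiny" arguments airtight: one must exhibit, for each non-listed candidate, an explicit proper edge coloring with fewer than $|V(G)|$ palettes, and handle the interaction between pendant vertices (which rigidly contribute singletons) and the small dense core. I would handle this by a careful but finite case analysis on $G[W]$ and on how pendant edges attach, using recoloring of alternating paths and Vizing's theorem applied to subgraphs as the main tools.
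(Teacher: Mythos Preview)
Your necessity argument has a concrete error that derails the whole reduction. You assert that the set $W$ of vertices of degree $\ge 2$ satisfies $|W|\le 3$, with $G[W]$ being empty, an edge, a $P_3$, or a triangle. But $\hat K_3^{j+}$ (for $j\ge 3$) has \emph{four} vertices of degree $\ge 2$: the three triangle vertices (degrees $2,2,3$) and the star center (degree $j+1\ge 4$); here $G[W]$ is a triangle with a pendant edge. Your own later remark that ``the $\hat K_3^{j+}$ case arises when the star's center is a fourth vertex'' contradicts the $|W|\le 3$ claim you propose to prove. Similarly, your heuristic ``if $G$ has three independent edges one can merge two palettes'' is false for $\hat K_3^{j+}$: the edges $bc$, $xa$ (bridge), and any star leaf edge form a matching of size $3$, yet $\check s(\hat K_3^{j+})=|V|$.

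The paper's proof uses a different and cleaner lever: rather than bounding $|W|$ or the matching number, it shows that any two vertices of the \emph{same degree} $\ge 3$ can be given identical palettes by a suitable partial coloring, so repeated degrees in $G$ can only be $1$ or $2$. From there one analyzes the component containing the (at most two) degree-$2$ vertices, using a pigeonhole/degree-count argument (a vertex of maximum degree must be adjacent to vertices of many distinct degrees, forcing degree-$1$ neighbors and ruling out intermediate degrees) to pin down that component as $K_3$, $\hat K_3^j$, or $\hat K_3^{j+}$, and then handles remaining components. If you want to salvage your approach, replace the $|W|\le 3$ claim by the degree-sequence observation above; the rest of your case analysis on pendants and the disconnected case is in the right spirit but needs to be reorganized around that key fact.
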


\begin{proof}
		Sufficiency is straightforward, so let us prove necessity.
		Let $G$ be a graph with $\check s(G) = |V(G)|$.
		
		By the pigeonhole principle, there are at least two vertices
		in $G$ that have equal degrees; let us first prove that
		any such pair of vertices have vertex degrees $1$ or $2$.
		Suppose that
		$G$ contains two vertices $u$ and $v$ of equal degree greater than $2$.
		It is straightforward to verify that
		there is a partial edge coloring of $G$ such that an edge of $G$
		is colored if and only if 
		it is incident with $u$ or $v$, and such that $u$ and $v$ have 
		the same palettes.
		However, any proper extension of
		such a partial edge coloring 
		of $G$ (not necessarily using a minimum number of colors)
		produces at most
		$|V(G)|-1$ distinct
		palettes. Thus if two vertices in $G$ have equal degree,
		then they both have degree
		$1$ or $2$.

		Let us first assume that there are two vertices $u$ and 
		$v$ of degree $2$ in $G$. Unless
		$u$ and $v$ are contained in a cycle of length $3$, there is
		a similar partial edge coloring as in the preceding paragraph.
		Moreover, if three vertices of $G$ have degree $2$,
		and these vertices are not contained in a component isomorphic to $K_3$,
		then $\check s(G) < |V(G)|$. Hence, either $G$ contains a component
		isomorphic to $K_3$
		or $G$ contains two vertices of degree $2$ that lie on a cycle of length
		three, and no other vertex of $G$ has degree $2$.
		Let $F$ be the component of $G$ containing $u$ and $v$.
		We shall prove that if $F \ncong K_3$, 
		then $F \cong \hat K^{j}_3$ or $F \cong \hat K^{j+}_3$.
		
		Suppose first that $F$ does not contain any vertices of degree $1$.
		Let $w$ be a vertex of maximum degree in $F$ and assume that 
		$\Delta(F) \geq 3$. 
		Now, since $F$ has no more than one vertex of degree $d$ for each
		$d \in \{3,\dots, \Delta(F)-1\}$, the degree of $w$ is at most
		$2+ |\{3,\dots,\Delta(F)-1\}| = \Delta(F)-1$, a contradiction.	
		We conclude that if $F \ncong K_3$, then
		$F$ must contain some vertex of degree $1$.
				
		Assume, consequently, that $F$ contains some vertex
		of degree $1$. If two vertices of degree $1$ in $F$ have distinct neighbors,
		then there is a proper edge coloring of $F$ where these two vertices have
		the same palette, contradicting that  $\check s(G) = |V(G)|$. 
		Hence, all vertices of degree $1$ in $F$
		are adjacent to a fixed vertex $y$ of $F$. 
		Since $F$ contains vertices of degree $1$, and $u$ and $v$ 
		both have degree $2$,
		$\Delta(F) \geq 3$. 
		Moreover, since all vertices of degree greater than $3$ in $F$ have
		distinct degrees, 
		there is a unique vertex $w$ of maximum degree $\Delta(F)$ in $F$.
		Furthermore, it follows from the same argument that
		$w$ is adjacent to some vertex of degree $1$ in $F$.
		Thus, all vertices of degree $1$ in $F$ are adjacent to $w$.
		If all vertex degrees in $F$ are in the set $\{1,2,\Delta(F)\}$,
		then $G \cong \hat K^{j}_3$.
		
		Suppose that there is some
		vertex $x$ of degree $k$, $3 < k < \Delta(F)$ in $F$.
		Without loss of generality, we assume that $x$ has second largest
		degree in $F$.
		Since all vertices of degree $1$ in $F$ are adjacent to 
		$w$, $x$ must be adjacent
		to $u, v, w$ and exactly $k-3 \geq 1$
		vertices of distinct degrees in the set $\{3,\dots,k-1\}$.
		Therefore, $x$ is adjacent to a vertex $y$ of degree $k-1$. However,
		$y$ can be adjacent only to vertices of degrees
		in the set $\{3,\dots, k-2\} \cup \{k, \Delta(F)\}$, so that
		its degree is at most $k-2$, a contradiction. 
		We conclude that there is no vertex of degree greater
		than $3$ in $F$ except for $w$. Moreover, if all vertex degrees
		of $F$ are in the set $\{1,2,3,\Delta(F)\}$,
		where $\Delta(F) > 3$,
		then $F \cong \hat K^{j+}_3$,
		because all vertices of degree $1$ in $F$ are adjacent to $w$.
			
		We conclude that $u$ and $v$ must lie in
		a component $F$ of $G$ that is isomorphic to $K_3$, $\hat K^{j}_3$
		or $\hat K^{j+}_3$.
		
		\bigskip
		
		Suppose that 
		$G$ has more than one component, and let $H$ be a component of
		$G-V(F)$. 
		Now, since the palette index of $G$ is $|V(G)|$,
		$H$ does not contain any vertex of degree $2$.
		Thus any two vertices of equal degree in 
		$H$ have degree $1$.
		Moreover, by the pigeonhole principle at least 
		two vertices of $H$ have
		equal degree, and it is easy to see that if two vertices 
		$x$ and $y$ of degree $1$ in $H$
		are not adjacent to the same vertex, then $\check s(G) < |V(G)|$.
		We conclude that there are at least two vertices of degree
		one in $H$ that are adjacent to
		the same vertex in $H$ (unless $H$ consists of a single edge). 
		Since all other vertex degrees in $H$ are different,
		all vertices of degree $1$ in $H$ are adjacent 
		to the vertex of maximum degree in $H$.
		Moreover, it follows, as in the preceding paragraph, that
		the only vertex degrees in $H$
		are $\Delta(H)$ and $1$; and so, $H$ is isomorphic to a star. 
		
		Now, if there are vertices of degree $1$ in different 
		components of $G$, then clearly
		$\check s(G) < |V(G)|$. 
		Thus, if $G-V(F)$ is non-empty, then $F \cong K_3$ and 
		$G-V(F)$ is a star.
		
		We conclude that $G$ is isomorphic to $K_3$, 
		$\hat K^{j}_3$, $\hat K^{j+}_3$
		or to the disjoint union of $K_3$ and a star.
		
		\bigskip
		
		The case when there are 
		no two vertices of degree $2$ in $G$, can be dealt with similarly
		by first deducing that two vertices in $G$ have degree $1$, and, 
		as before, all such vertices of $G$
		are adjacent to the vertex of maximum degree in $G$. 
		By proceeding as above it is now easy to prove that
		the only vertex degrees in $G$ are $1$ and $\Delta(G)$,
		and that $G$ must be connected.
		Hence, $G$ is isomorphic to a star.
\end{proof}

Although the preceding theorem only holds for graphs with no
isolated vertices, we note that if $G$ is a graph with no isolated vertices
and $\check s(G) = |V(G)|$, then $\check s(G \cup K_1) = |V(G \cup K_1)|$.

Consider a non-regular graph $G$ which is 
the union of two regular edge-disjoint Class 1 graphs
$H_1$ and $H_2$ satisfying that $V(H_1) \subseteq V(H_2)$.
Since both $H_1$ and $H_2$ are Class 1 and $G$ is non-regular, we have that
$\check s(G)=2$.
It is not difficult to see that
the converse holds as well. Indeed, assume that $G$ is a graph with
$\check s(G) =2$, and
let $\phi$ be a proper edge coloring of $G$ attaining this minimum.

It follows from a result of \cite{HornakKalinowskiMeszkaWozniak}
that $G$ is not regular, and
thus exactly two different vertex degrees appear in $G$;
$d_1$ and $d_2$, say, where $d_1 > d_2$. 
For $i=1,2$, let $C_i(\phi)$ be the set of all colors
appearing on edges incident with vertices of degree $d_i$ under $\phi$.
If $C_2(\phi) \nsubseteq C_1(\phi)$, then there is some color $j \in C_2(\phi)$
which does not appear at any vertex of degree $d_1$ in $G$, and
since $|C_1(\phi)| > |C_2(\phi)|$, there is some color $k$
which does not appear on any edge incident with a vertex of degree $d_2$.
Hence, by recoloring all edges with color $j$ by color $k$,
we obtain, from $\phi$, a proper edge coloring $\phi'$ of $G$ with
two distinct palettes, and
such that 
$$|C_2(\phi') \setminus C_1(\phi')| < |C_2(\phi) \setminus C_1(\phi)|.$$
We conclude that we may assume that $C_2(\phi) \subseteq C_1(\phi)$.
Now, let $H_1$ be the edge-induced subgraph of $G$ induced by all
edges with colors in $C_1(\phi) \setminus C_2(\phi)$, and let $H_2$
be the edge-induced subgraph of $G$ induced by all
edges with colors in $C_2(\phi)$. The graph $H_1$ is a regular Class 1 graph
and the graph $H_2$ is a regular Class 1 graph.
Moreover, since $C_2(\phi) \subseteq C_1(\phi)$, $V(H_1) \subseteq V(H_2)$.
We have thus proved the following.

\begin{proposition}
	If $G$ is a graph, then $\check s(G) =2$ if and only if
	$G$ is a non-regular graph which is the
	union of two regular edge-disjoint Class 1 graphs $H_1$ and $H_2$,
	satisfying that $V(H_1) \subseteq V(H_2)$.
\end{proposition}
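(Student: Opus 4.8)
The plan is to prove both implications, with the forward (``necessity'') direction being the substantive one. For sufficiency, suppose $G$ is non-regular and $G=H_1\cup H_2$ with $H_1,H_2$ edge-disjoint regular Class 1 graphs, of regularities $d_1'$ and $d_2'$ say, and $V(H_1)\subseteq V(H_2)$. First I would properly edge-colour $H_2$ with colours $1,\dots,d_2'$ and $H_1$ with the disjoint set of colours $d_2'+1,\dots,d_2'+d_1'$; since a $d$-regular Class 1 graph properly $d$-edge-coloured has every vertex incident with all $d$ colours, and since the two subgraphs are edge-disjoint and use disjoint colour sets, combining the two colourings yields a proper edge colouring of $G$ in which each vertex of $V(H_1)$ has palette $[1,d_1'+d_2']$ and each vertex of $V(H_2)\setminus V(H_1)$ has palette $[1,d_2']$. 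Hence $\check s(G)\le 2$, and since a graph with $\check s(G)=1$ is regular, non-regularity of $G$ gives $\check s(G)\ge 2$, so $\check s(G)=2$.

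For necessity, assume $\check s(G)=2$. First I would observe that $G$ is non-regular: by Vizing's theorem together with the result of \cite{HornakKalinowskiMeszkaWozniak} quoted in the introduction, a regular graph has palette index $1$ (if Class 1) or at least $3$ (if Class 2), never $2$. Since two vertices of distinct degree always have distinct palettes, it then follows that $G$ has exactly two distinct degrees $d_1>d_2$; fix a proper edge colouring $\phi$ of $G$ realising $\check s(G)=2$, so that all vertices of degree $d_i$ share one palette $P_i$ with $|P_i|=d_i$. Writing $C_i=C_i(\phi)$ for the set of colours on edges incident with a vertex of degree $d_i$, we have $C_i=P_i$, hence $|C_1|=d_1>d_2=|C_2|$.

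The key step is to modify $\phi$ so that $C_2\subseteq C_1$. If $C_2\not\subseteq C_1$, choose $j\in C_2\setminus C_1$ and, using $|C_1|>|C_2|$, choose $k\in C_1\setminus C_2$; recolouring every edge of colour $j$ with colour $k$ keeps the colouring proper, because $j\notin C_1$ forces every $j$-edge to join two vertices of degree $d_2$ while $k\notin C_2$ forces every $k$-edge to join two vertices of degree $d_1$, so the colour classes $M_j$ and $M_k$ are vertex-disjoint; the new colouring still has exactly two palettes and strictly smaller $|C_2\setminus C_1|$. Iterating, I may assume $C_2\subseteq C_1$. Then let $H_2$ be the spanning subgraph of $G$ whose edges are those coloured from $C_2$, and $H_1$ the edge-induced subgraph on the edges coloured from $C_1\setminus C_2$; these are edge-disjoint with union $G$. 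Every degree-$d_1$ vertex meets all $d_2$ colours of $C_2$ and every degree-$d_2$ vertex has all its edges coloured in $C_2$, so $H_2$ is $d_2$-regular and properly coloured with $|C_2|=d_2$ colours, hence Class 1; and $H_1$ is $(d_1-d_2)$-regular on the degree-$d_1$ vertices (nonempty since $C_1=C_2$ would force $d_1=d_2$) and properly coloured with $|C_1\setminus C_2|=d_1-d_2$ colours, hence Class 1; finally $V(H_1)\subseteq V(H_2)$ since every vertex of $G$ meets a $C_2$-coloured edge.

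I expect the recolouring step to be the main point needing care: one must check both that swapping $M_j$ onto colour $k$ preserves properness and that it does not create a third palette (it merely replaces $j$ by $k$ in $P_2$ and leaves $P_1$ unchanged). Everything else is bookkeeping about the sizes of $C_1$ and $C_2$; one should also keep the standing convention that $G$ has no isolated vertices (equivalently $d_2\ge 1$) so that ``$H_2$ is a regular Class 1 graph spanning $G$'' is literally correct.
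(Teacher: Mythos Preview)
Your proof is correct and follows essentially the same route as the paper's argument: the recolouring step to force $C_2\subseteq C_1$, followed by splitting $G$ into the $C_2$-coloured and $(C_1\setminus C_2)$-coloured edge-induced subgraphs, is exactly what the paper does. Your added verification that $M_j$ and $M_k$ are vertex-disjoint (so that the recolouring stays proper and preserves the two-palette property) and your remark about isolated vertices fill in small details that the paper leaves implicit.
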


A partial characterization of graphs with palette index $3$ was obtained in
\cite{BonviciniMazzuoccolo}.
We would like to pose the following question.

\begin{problem}
	Is it possible to characterize graphs
	$G$ satisfying that ${\check s}(G) = |V(G)|-1$?
\end{problem}

\acknowledgements
\label{sec:ack}
The authors would like to thank the referees for helpful comments and suggestions,
particularly for pointing out an argument which simplified the proof of
Theorem 3.6.
The second author would like to thank Hrant Khachatrian for helpful comments and remarks.

\nocite{*}
\bibliographystyle{abbrvnat}
\bibliography{Palette}
\label{sec:biblio}

\end{document}